\newcommand\wt{\widetilde}
\DeclareMathOperator\td{td}
\DeclareMathOperator\pic{Pic}
\newcommand\sO{\mathscr O}
\newcommand\sI{\mathscr I}
\newcommand\ratto{\dashrightarrow}
\renewcommand\P{\mathbb{P}}
\newcommand{\conv}{\operatorname{conv}}
\newcommand{\Sym}{\operatorname{Sym}}
\newcommand{\Hom}{\operatorname{Hom}} 
\newcommand{\Spec}{\operatorname{Spec}}
\newcommand{\Gr}{\mathbb{G}}
\newcommand{\codim}{\operatorname{codim}}
\newcommand{\adj}{\operatorname{adj}} 
\DeclareMathOperator\cone{cone}
\DeclareMathOperator{\cent}{cent}
\newcommand{\cF}{{\mathcal F}}
\newcommand{\cO}{{\mathcal O}}
\newcommand{\cT}{{\mathcal T}}
\newcommand{\cV}{{\mathcal V}}
\newcommand{\fD}{{\mathfrak D}}
\newcommand{\fL}{{\mathfrak L}}
\newcommand{\A}{{\mathbb A}}
\newcommand{\G}{{\mathbb G}}
\newcommand{\C}{{\mathbb C}}
\newcommand{\R}{{\mathbb R}}
\newcommand{\pp}{\mathbb{P}}
\newcommand{\Q}{{\mathbb Q}}
\newcommand{\Z}{{\mathbb Z}}
\theoremstyle{definition}
\newtheorem{Def}{Definition}[section]
\newtheorem{ex}[Def]{Example}
\newtheorem{rem}[Def]{Remark}
\theoremstyle{plain}
\newtheorem{prop}[Def]{Proposition}
\newtheorem{thm}[Def]{Theorem}
\newtheorem{facts}[Def]{Facts}
\newtheorem*{thm*}{Theorem}
\newtheorem{lem}[Def]{Lemma}
\newtheorem{cor}[Def]{Corollary}
\newtheorem*{cor*}{Corollary}
\newtheorem*{con*}{Conjecture}
\newtheorem*{frag*}{Question}
\newtheorem*{verm*}{Vermutung}
\title[Hyperbolic Secant Varieties of $M$-Curves]{Hyperbolic Secant Varieties of $M$-Curves}
\author{Mario Kummer}
\address{Technische Universit\"at Berlin, Fakult\"at II -- Mathematik und Naturwissenschaften, Institut f\"ur Mathematik, Berlin, Germany\\
ORCiD: 0000-0001-6378-4808}
\email{kummer@tu-berlin.de}
\author{Rainer Sinn}
\address{Freie Universit\"at Berlin, Fachbereich Mathematik und Informatik, Institut f\"ur Mathematik, Berlin, Germany\\
ORCiD: 0000-0001-9456-6889}
\email{rsinn@zedat.fu-berlin.de}
\thanks{The authors have been supported by the Deutsche Foschungsgemeinschaft under Grants No. 421473641 and 426054364.}
\newcommand\sL{\mathscr{L}}
\newcommand\sV{\mathcal{V}}
\newcommand\wh{\widehat}
\newcommand\Sec{\sigma}
\newcommand\ol{\overline}
\DeclareMathOperator{\supp}{supp}
\newcommand{\placeholderone}{vastly real}
\newcommand{\placeholdertwo}{maximally odd}
\begin{document}

\subjclass[2010]{Primary: 14M12, 14P05, 14P25; Secondary: 90C22}

\begin{abstract}
  We relate the geometry of curves to the notion of hyperbolicity in real algebraic geometry. A hyperbolic variety is a real algebraic variety that (in particular) admits a real fibered morphism to a projective space whose dimension is equal to the dimension of the variety. We study hyperbolic varieties with a special interest in the case of hypersurfaces that admit a real algebraic ruling. The central part of the paper is concerned with secant varieties of real algebraic curves where the real locus has the maximal number of connected components, which is determined by the genus of the curve. For elliptic normal curves, we further obtain definite symmetric determinantal representations for the hyperbolic secant hypersurfaces, which implies the existence of symmetric Ulrich sheaves of rank one on these hypersurfaces.
\end{abstract}
\maketitle

\section{Introduction}
Hyperbolic polynomials and the associated algebraic hypersurfaces are at the intersection of algebraic geometry, optimization, combinatorics, and computer science.
Historically, they go back to partial differential equations in the work of Peter Lax \cite{Lax58} and have started a new life in convex programming since the advent of interior point methods \cite{Gu97,RenMR2198215}. In real algebraic geometry, (smooth) hyperbolic hypersurfaces are an extremal topological type, namely the most nested \cite{HV07,victorsurvey}.

In this paper, we provide a geometric construction of highly singular hyperbolic hypersurfaces, which is interesting from several perspectives. 
The hyperbolicity of secant varieties of $M$-curves is closely linked to a property of linear systems on real algebraic curves that we call \emph{\placeholderone}. This is a common generalization of Ahlfors's circle maps \cite{Ahl50, gabard}, i.e. real fibered (also called separating or totally real) morphisms to the projective line,
and Mikhalkin and Orevkov's maximally writhed links \cite{mikhalkin2019maximally,mikhalkin2019rigid}. Classically, the focus in the geometry of real algebraic curves is on curves in the projective plane or $3$-space. Vastly real linear systems on curves are an extremal real embedding into projective spaces of almost any odd dimension. 
In addition to this generalization, certain vastly real embeddings provide a new totally real instance in enumerative algebraic geometry concerning special secant planes. 

In classical algebraic geometry, an attractive class of examples of hyperbolic hypersurfaces are definite symmetroids: hypersurfaces whose defining polynomials are the determinant of real symmetric matrix pencils that contain a definite matrix. Symmetric determinantal representations imply the existence of a symmetric Ulrich sheaf of rank one on the hypersurface \cite{beauvilleUlrich}. We show the existence of such determinantal representations for secant varieties of elliptic normal $M$-curves.

From the point of view of optimization, highly singular hyperbolicity cones are necessary to even have a chance to construct expressive hierarchies analogous to sum-of-squares and moment methods widely used in semidefinite programming (as shown in the work of Saunderson \cite{saunderson2019limitations} building on \cite{averkov2019optimal}). Our geometric construction provides such examples. It can be used to write certain convex hulls of connected components of real curves as projections of hyperbolicity cones.
The hyperbolic secant hypersurfaces for curves of genus at least $2$ are a promising testing ground for the Generalized Lax-Conjecture, which claims that they admit definite determinantal representations up to a cofactor with controlled hyperbolicity cone \cite{victorsurvey}. 

We discuss the different points of view and motivation for our work in more detail.
The main idea of our work is to study highly singular hyperbolic hypersurfaces by looking for tractable ruled examples. By this, we mean irreducible hypersurfaces that contain a Zariski-dense union of linear spaces of a fixed dimension that vary in an irreducible algebraic family.

In \Cref{sec:onedimrule}, we study the simplest case: A hypersurface in $\P^n$ ruled by a $1$-dimensional family of $(n-2)$-dimensional linear spaces. The main result in this section,  \Cref{thm:onedimrule}, says that this is only possible in the trivial way, namely by taking cones over hyperbolic plane curves. This implies the Generalized Lax Conjecture for convex hulls of curves in three-space (\Cref{cor:hypisspec}).

The following \Cref{sec:hypsecMcurve} is devoted to the question when the secant varieties of $X\subset \P^n$ are hyperbolic. We restrict our attention to the case that $X$ is a smooth irreducible $M$-curve, i.e.~an irreducible real algebraic curve of genus $g$ such that $X(\R)$ has $g+1$ connected components. In this case, \Cref{thm:secantchara}  gives a characterization of hyperbolicity of the secant varieties in terms of the linear system embedding $X$. Hyperbolicity of the secant varieties corresponds to linear systems $\sL$ that are \emph{\placeholderone} in the sense that the support of each divisor in $\sL$ contains many real points (see \Cref{def:vastlyreal}).
Such linear systems always exist on $M$-curves by \Cref{thm:tollimpliesveryreal} (with sufficiently high degree relative to the genus). This existence proof rests on the fact that linear combinations of \emph{interlacing} sections have many real roots.
On the other hand, it remains an open question whether such linear systems of dimension at least $5$ exist on real curves that are not $M$-curves.
Along the way, by studying {\placeholderone} linear systems on $M$-curves, we find a new totally real instance in enumerative geometry. Given a general curve $C\subset \P^{2k}$ for a positive integer $k$, we are interested in the $(k-1)$-dimensional planes $\Lambda\subset \P^n$ that intersect $C$ in $k+1$ points. By genericity, there is a finite number of such $(k-1)$-planes that has been counted in \cite{ACGHI}. Using the hyperbolicity of the appropriate secant variety, we can show that there are totally real instances of this enumerative problem (\Cref{thm:totallyreal}). 
If the secant variety in question happens to be a hypersurface, its defining polynomial is hyperbolic and there is an associated convex cone, its hyperbolicity cone. The structure as a convex cone is simple: every proper face is a simplex, see \Cref{cor:simplicial}.

In \Cref{sec:elliptic}, we focus on the case of curves of genus $1$ that are embedded by a complete linear system (usually called elliptic normal curves). Compared to curves of higher genus, many things about the secants of these curves are much easier to understand: The secant varieties are smooth outside the lower secant varieties, their vanishing ideals and their resolutions can be understood. All these problems in classical algebraic geometry are much harder for curves of higher genus. It is the same from our point of view: We use the detailed knowledge about elliptic normal curves to construct definite determinantal representations of secant hypersurfaces to elliptic normal $M$-curves, see \Cref{thm:ellnormaldetrep}. The main line of the construction is similar to Hesse's construction of determinantal representations for plane quartics often called Dixon's process. Concretely, we find a \emph{contact interlacer} for these hyperbolic hypersurfaces, which then determines the determinantal representation. To show existence of this contact interlacer (and to do necessary dimension counts for the following construction), we focus on the symmetric products of elliptic curves and its relation to the secant varieties. This approach again only works out so nicely for curves of genus $1$. 

In \Cref{sec:extendedform}, we study in particular, how these results can be used to write convex hulls of curves as projections of hyperbolicity cones. In general, we can write convex hulls of connected components of $M$-curves as projections of hyperbolicity cones whose degrees and dimensions can be bounded in terms of degree and genus of the curves, see \Cref{thm:hyperbolicshadows}. In case of elliptic curves, using the results from the previous section, we obtain semidefinite representations for their convex hulls of uniform size (i.e.~they only depend on the degree of the curve). This improves previous results obtained by Scheiderer \cite{clausg1, clauscurves} using sum-of-squares methods.

We conclude the paper with a short section presenting open questions.

\section{Preliminaries and Definitions}
\subsection{Real algebraic geometry}
Unless otherwise stated, we work in the category of separated schemes of finite type over $\R$ and call an object a variety if it is in addition reduced. For such a variety $X$ and $\mathbb{K}=\R$ or $\C$, we will write $X(\mathbb{K}) = \Hom_{\Spec \R}(\Spec \mathbb{K}, X)$ for the set of $\mathbb{K}$-points. We call $X$ a curve if it is pure of dimension one. By $\pp^n=\pp^n_\R=\textrm{Proj}(\R[x_0,\ldots,x_n])$ we denote the projective space over $\R$. A semi-algebraic subset of $X(\R)$, where $X$ is an affine variety, is a finite boolean combination of subsets of the form $\{x\in X(\R):\, f(x)>0\}$ for some regular function $f$ on $X$. By glueing, the definition carries over to arbitrary varieties. For an introductory reference to real algebraic geometry we refer to \cite{BCR}.

\subsection{Convex geometry}
We will study convexity in the projective setup: We call a subset $S\subset\pp^n(\R)$ \emph{convex} if it is of the form $\pp(K)$ for some convex set $K\subset\R^{n+1}$, i.e. the image of $K$ under the projection map $\R^{n+1}\setminus\{0\}\to\pp^n(\R),\,x\mapsto [x]$. In this setting, taking the convex hull comes with an implicit choice.

\begin{Def}$\,$
  \begin{enumerate}
    \item A semi-algebraic set $S\subset \P^n(\R)$ is \emph{very compact} if there is a real hyperplane $H\subset \P^n$ such that $H\cap S = \emptyset$. 
    \item If $S\subset \P^n(\R)$ is a connected, semi-algebraic, and very compact set, then the \emph{convex hull} of $S$ is defined as $\conv(S) = \iota_H(\conv(\iota_H^{-1}(S)))\subset \P^n$, where
    \[
      \iota_H\colon \R^n = (\P^n\setminus H)(\R) \to \P^n(\R).
    \]
  \end{enumerate}
\end{Def}

The above definition of $\conv(S)$ does not depend on the choice of hyperplane $H$ with $S\cap H = \emptyset$, as the following lemma shows.
\begin{lem}
  Let $S\subset\P^n(\R)$ be a connected, semi-algebraic, very compact set. 
  Let $\ell\in(\P^{n})^*(\R)$ with $\cV(\ell)\cap S = \emptyset$. We have $\conv(S) = \P(\cone \{x\in \widehat{S}\colon \ell(x) > 0\})$. Here $\widehat{S}$ denotes the affine cone over $S$.
  In particular, $\conv(S)$ does not depend on the choice of $\ell$ with $\cV(\ell)\cap S = \emptyset$. 
\end{lem}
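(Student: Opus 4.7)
The plan is to first identify $\P(K_\ell) := \P(\cone\{x\in\widehat{S}:\ell(x)>0\})$ with the already-defined $\conv(S)$ when $\ell$ cuts out the hyperplane $H$ used in the definition, and then to deduce independence of $\ell$ from a connectedness argument on the affine cone.

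The basic observation is that $\widehat{S}\setminus\{0\}$ has exactly two connected components. Writing $\widehat{S}^{\pm}_\ell := \{x\in\widehat{S}:\pm\ell(x)>0\}$, the map $s\colon [y]\mapsto y/\ell(y)$ (with $y$ the representative for which $\ell(y)>0$) is a well-defined continuous section of the double cover $\pi\colon \widehat{S}\setminus\{0\}\to S$ landing in $\widehat{S}^{+}_\ell$. Since $\cV(\ell)\cap S=\emptyset$, the section is defined on all of $S$; combined with $-s$, which lands in $\widehat{S}^{-}_\ell$, this trivialises the cover as $\widehat{S}^{+}_\ell \sqcup \widehat{S}^{-}_\ell$, each piece homeomorphic to the connected set $S$ and hence itself connected.

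To match $\P(K_\ell)$ with $\conv(S)$ I specialise $\ell$ so that $\cV(\ell)=H$ and identify $\R^n\simeq\{x\in\R^{n+1}:\ell(x)=1\}$ via $\iota_H$. Then $\iota_H^{-1}(S)=\widehat{S}\cap\{\ell=1\}$, and a direct rescaling yields $K_\ell\cap\{\ell=1\}=\conv(\iota_H^{-1}(S))$: for $y=\sum\lambda_i x_i$ with $\lambda_i\geq 0$, $x_i\in\widehat{S}^{+}_\ell$, and $\ell(y)=1$, rewrite $y=\sum(\lambda_i\ell(x_i))\bigl(x_i/\ell(x_i)\bigr)$ as a convex combination of points $x_i/\ell(x_i)\in\widehat{S}\cap\{\ell=1\}$ (the coefficients are nonnegative and sum to $\ell(y)=1$). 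Since $\ell(x_i)>0$ for all $i$, the identity $K_\ell\cap\{\ell=0\}=\{0\}$ holds, so every nonzero ray of $K_\ell$ meets $\{\ell=1\}$ in exactly one point. This gives $\P(K_\ell)=\iota_H(K_\ell\cap\{\ell=1\})=\iota_H(\conv(\iota_H^{-1}(S)))=\conv(S)$.

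For independence, let $\ell'$ be another linear form with $\cV(\ell')\cap S=\emptyset$. Then $\ell'$ is nowhere zero on the connected set $\widehat{S}^{+}_\ell$ and hence has constant sign there; consequently $\widehat{S}^{+}_{\ell'}$ equals either $\widehat{S}^{+}_\ell$ or $\widehat{S}^{-}_\ell=-\widehat{S}^{+}_\ell$. In the first case $K_{\ell'}=K_\ell$, and in the second $K_{\ell'}=-K_\ell$; in both cases $\P(K_{\ell'})=\P(K_\ell)$ after projectivisation. The only slightly delicate point is the identity $K_\ell\cap\{\ell=0\}=\{0\}$ that prevents $\P(K_\ell)$ from acquiring spurious points outside the affine chart $\P^n\setminus\cV(\ell)$; beyond that, the proof is careful bookkeeping of the interplay between the two-component affine cone and its affine slices.
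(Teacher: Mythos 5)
Your proof is correct and follows essentially the same route as the paper: rescale representatives by $\ell$ to turn conic combinations into convex combinations in an affine slice, and deduce independence of $\ell$ from the fact that $\{x\in\widehat{S}\colon \ell(x)>0\}$ is one of the two connected components of $\widehat{S}\setminus\{0\}$, which are reflections of each other. One cosmetic slip: $\widehat{S}\setminus\{0\}\to S$ is an $\R^{\ast}$-bundle rather than a double cover, and $\widehat{S}^{+}_{\ell}$ is homeomorphic to $S\times\R_{>0}$ rather than to $S$ --- but connectedness of the two pieces still follows, so nothing in the argument breaks.
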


\begin{proof}
  We assume, after a change of coordinates, if necessary, that we form $\conv(S)$ with respect to $H = \{(x_0:x_1:\ldots:x_n)\in\P^n\colon x_0 = 0\}$. 
  A point $v\in \conv(S)$ is of the form $[\sum_{i}\lambda_i v_i]$ with $\lambda_i \in \R_{\geq 0}$, $\sum_i \lambda_i = 1$, and $v_i\in \iota^{-1}_H(S)$. Since $\ell(1,v_i)\neq 0$, either $(1,v_i)$ or $(-1,-v_i)$ is in $\{x\in \widehat{S}\colon \ell(x)>0\}$. So $\iota_H(v)$ is in $\P(\cone \{x\in \widehat{S}\colon \ell(x)>0\})$. Conversely, a point in $\P(\cone \{x\in \widehat{S}\colon \ell(x)>0\})$ is of the form $\sum_{i}\lambda_i v_i$ with $v_i \in \widehat{S}$, $\ell(v_i)>0$, and $\lambda\in \R_{\geq 0}$. Since $S\subset \P^n(\R)$ is connected, either the first entry of every $v_i$ is positive or it is negative. Suppose we are in the first case. Then we can rescale every $v_i$ such that its first entry is $1$. By appropriately rescaling the coefficient $\lambda_i$, the sum $\sum_i \lambda_i v_i$ still gives the same point. 
  By rescaling the entire conic combination, we can assume that $\sum_i \lambda_i = 1$. So the point $\sum_i \lambda_i v_i$ lies on the line spanned by a point in $\iota_H(\conv(\iota_H^{-1}(S)))$. In case that the first entry of every $v_i$ is negative, we argue similarly by rescaling this first entry to be $-1$. 

  Now that we have established the equality of the sets $\iota_H(\conv(\iota_H^{-1}(S)))$ and $\P(\cone\{x\in \widehat{S}\colon \ell(x)>0\})$, it becomes clear that the definition does not depend on the choice of the linear form $\ell$ as long as the hyperplane $\{x\in \P^n\colon \ell(x) = 0\}$ does not intersect $S$. Indeed, the set $\{x\in \widehat{S}\colon \ell(x)>0\}$ is one of the two connected components of $\widehat{S}\setminus\{0\}$ that are reflections of each other.
\end{proof}

\subsection{Hyperbolic polynomials and varieties}
A morphism $f:X\to Y$ is \emph{real fibered} if $f(x)\in Y(\R) \Leftrightarrow x\in X(\R)$ for all $x\in X$.
A closed subvariety $X\subset\pp^n$ of dimension $k$ is called \emph{hyperbolic} with respect to a linear subspace $E\subset\pp^n$ of codimension $k+1$ if $E\cap X=\emptyset$ and the linear projection $\pi_E:X\to\pp^k$ with center $E$ is real fibered. See \cite{kum20,Sha18} for more about hyperbolic varieties and real fibered morphisms. A homogeneous polynomial $h\in\R[x_0,\ldots,x_n]$ is hyperbolic with respect to a point $0\neq e\in\R^{n+1}$ if its zero set in $\pp^n$ is hyperbolic with respect to $[e]\in\pp^n$. This is equivalent to saying $h(e)\neq0$ and $h(te-v)$ has only real zeros for all $v\in\R^{n+1}$. Note that the latter is the original definition of hyperbolic polynomials. The (closed) \emph{hyperbolicity cone} of $h$ with respect to $e$ is the set of all $v\in\R^{n+1}$ such that $h(te-v)$ has only nonnegative zeros. Equivalently, this is the closure of the connected component of $\{x\in\R^{n+1}:\,h(x)\neq0\}$ that containes $e$. The hyperbolicity cone is a closed convex cone, see \cite{gar}. Let $f,g\in\R[x_0,\ldots,x_n]$ with $d=\deg(g)+1=\deg(f)$ be hyperbolic with respect to $e$. If for all $v\in\R^{n+1}$ we have that $$a_1\leq b_1\leq a_2\leq\cdots\leq b_{d-1}\leq a_d$$ where the $a_i$ and $b_i$ are the zeros of $f(te-v)$ and $g(te-v)$ respectively, we say that $g$ \emph{interlaces} $f$, or that $g$ is an \emph{interlacer} of $f$. This definition carries over to hyperbolic hypersurfaces in the obvious way. For a detailed study of interlacers in the context of hyperbolic polynomials see \cite{MR3395549}.
If $X$ is a hypersurface that is hyperbolic with respect to $[e]$ and $K$ the hyperbolicity cone with respect to $e$ of its defining polynomial, then we say that $\pp(K)$ is the hyperbolicity cone of $X$ with respect to $[e]$. If $X$ is irreducible, then $\pp(K)$ does not depend on the choice of $e$ \cite{onecone} and we call it simply \emph{the hyperbolicity cone of} $X$. We say that a subset of $\pp^n(\R)$ is a \emph{hyperbolic shadow} if it is the image of a hyperbolicity cone under some linear projection $\pp^N\ratto\pp^n$.

\subsection{Spectrahedra}
A basic class of examples for hyperbolic polynomials is the following.
 Let $A_0,\ldots,A_n$ be some real symmetric matrices of size $d$ and for $x\in\R^{n+1}$ denote $A(x)=x_0A_0+\ldots+x_nA_n$. Assume that for some $e\in\R^{n+1}$ the matrix $A(e)$ is positive definite. Then the polynomial $h=\det A(x)$ is hyperbolic with respect to $e$: This follows after writing $A(e)$ as $B^tB$ for some invertible matrix $B$ from the fact that the characteristic polynomial of a real symmetric matrix has only real zeros. In this case that $h = \det(A(x))$, we say that $h$ has a \emph{definite determinantal representation}. The hyperbolicity cone of $h$ with respect to $e$ is the set of all $v\in\R^{n+1}$ for which $A(v)$ is positive semidefinite. Such sets are called \emph{spectrahedral cones}.  A \emph{spectrahedron} in $\R^n$ is an affine slice of a spectrahedral cone. We call a subset of $\pp^n(\R)$ a spectrahedron, if it is of the form $\pp(S)$ for some spectrahedron in $\R^{n+1}$. While every spectrahedral cone is a hyperbolicity cone, it is an open problem whether the converse is also true. This is the context of the Generalized Lax Conjecture. See \cite{HV07, bez} for some positive and \cite{Bra11} for some negative evidence for this conjecture. An overview is given in \cite{victorsurvey}. Finally, a \emph{spectrahedral shadow} is the image of a spectrahedron under some linear projection $\pp^N\ratto\pp^n$. Spectrahedral shadows amount to a large class of convex semi-algebraic sets \cite{MR2515796} but not every convex semi-algebraic set is a spectrahedral shadow \cite{clausschatten}.

\section{One dimensional ruling}\label{sec:onedimrule}
In this section, we are interested in hypersurfaces in $\P^n$ that are ruled by a $1$-dimensional family of linear subspaces, which therefore must have dimension $n-2$. After some general facts about varieties ruled by $1$-dimensional families, we characterize when such hypersurfaces are hyperbolic in terms of the ruling and show that the hyperbolic examples are trivial in the sense that they are in fact cones over hyperbolic plane curves.

\begin{Def}
  The \emph{center} of a projective algebraic variety $X\subset \P^n$, denoted by $\cent(X)$, is the set of all points $p\in \P^n$ such that the cone over $X$ with apex $p$ is contained in $X$. In other words, $p\in X$ and every line spanned by $p$ and some point $x\in X\setminus\{p\}$ is also contained in $X$.
\end{Def}

\begin{prop}
  Let $C\subset\G(k,n)$ be an irreducible curve. Then $X=\bigcup_{[\Lambda]\in C} \Lambda \subset \pp^n$ is irreducible of dimension $k+1$.
\end{prop}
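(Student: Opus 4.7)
The natural approach is via the universal family on the Grassmannian. I would consider the incidence variety
\[
\mathcal{I} = \{([\Lambda],p) \in \G(k,n)\times \P^n : p\in \Lambda\},
\]
which is a $\P^k$-bundle over $\G(k,n)$ (in particular smooth and irreducible) and carries a second projection $\pi\colon \mathcal{I}\to \P^n$. Pulling back along the inclusion $C\hookrightarrow \G(k,n)$, I would form the restricted incidence variety $\mathcal{I}_C = \mathcal{I}\times_{\G(k,n)} C$. Since $\mathcal{I}\to\G(k,n)$ is a Zariski-locally trivial $\P^k$-bundle, so is $\mathcal{I}_C\to C$; as $C$ is an irreducible curve, this makes $\mathcal{I}_C$ an irreducible variety of dimension $k+1$.

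Now $X$ is precisely the image of $\mathcal{I}_C$ under $\pi$. Because $\mathcal{I}_C$ is proper (being projective over $C$), the image is closed, and because the image of an irreducible set is irreducible, $X$ is an irreducible closed subvariety of $\P^n$. The upper bound $\dim X\le \dim \mathcal{I}_C = k+1$ follows from the general fact that the image of a morphism of varieties has dimension at most that of the source.

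For the lower bound, suppose for contradiction that $\dim X \le k$. Each fiber $\Lambda$ with $[\Lambda]\in C$ is a $k$-dimensional irreducible closed subset contained in the irreducible variety $X$. If $\dim X < k$ this is already absurd, so $\dim X = k$, and then $\Lambda = X$ for every $[\Lambda]\in C$ by irreducibility and dimension. This would force $C$ to consist of the single point $[X]\in \G(k,n)$, contradicting the assumption that $C$ is a curve. Hence $\dim X = k+1$, as required.

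The argument is essentially routine given the bundle structure on the universal family; the one place where care is needed is the final dimension step, since one has to rule out the degenerate possibility that all the $\Lambda$'s coincide. This is the only point where the assumption that $C$ is a curve (rather than just an irreducible subvariety of positive dimension) enters directly, and it is the step I would regard as the main, if modest, obstacle.
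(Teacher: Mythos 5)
Your proof is correct, and the core construction is the same as the paper's: pull back the universal $k$-plane bundle to $C$ to get an irreducible incidence variety of dimension $k+1$ whose image under the second projection is $X$, giving irreducibility and the upper bound $\dim X\le k+1$ at once. The only divergence is the final dimension count. The paper looks at the fiber of $\Phi_C\to X$ over a general point $x\in X$: if $\dim X=k$ the general fiber is one-dimensional, hence (by irreducibility of $C$) equals all of $C$, so every $\Lambda$ passes through $x$; this puts the general point $x$ in the center of $X$, which forces $X$ to be a linear space and hence $C$ a point, a contradiction. You instead argue directly downstairs: if $\dim X=k$, then each $\Lambda$ is a closed irreducible subset of the irreducible variety $X$ of the same dimension, so $\Lambda=X$ for every $[\Lambda]\in C$, again collapsing $C$ to the single point $[X]$. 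Your version is a bit more elementary, since it avoids the generic-fiber argument and the notion of center altogether (the paper's phrasing has the side benefit of introducing the center/cone viewpoint that it reuses later in the section, e.g.\ in \Cref{lem:uniqueplane} and \Cref{thm:onedimrule}); both arguments settle the dimension in a few lines.
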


\begin{proof}
  Consider the incidence correspondence
  \[
    \Phi_C = \Phi \times_{\G(k,n)} C =  \{(x,[\Lambda])\in \pp^n\times C \colon x\in\Lambda\}\subset \pp^n\times \G(k,n),
  \]
  where $\Phi$ is the universal $k$-plane (see \cite[\S 10.2]{3264}).
  The projection $\pi_2\colon \Phi_C \to C$ onto the second factor has irreducible fibers $\pi_2^{-1}([\Lambda]) = \Lambda\cong \pp^k$. It follows that $\Phi_C$ is irreducible of dimension $k+1$ (\cite[Book~1, Section~6.3]{MR1328833}).
  Therefore, the projection $\pi_1(\Phi_C)$ on the first factor, which is $X$, is irreducible.

  To determine the dimension, consider the fiber $\pi_1^{-1}(x)$ for a general point $x\in X$. If the dimension is $1$, i.e. $\dim(X)=k$, it means that every linear space $[\Lambda]\in C$ contains $x$ because $C$ is irreducible. So $X$ is a cone and $x$ is in the center of $X$. A general point cannot lie in the center unless $X$ is a linear space. Then $X$ is itself a $k$-plane, which is a contradiction to $C$ being a curve.
\end{proof}

So in particular, if $C\subset\G(n-2,n)$ is an irreducible curve, then $X=\bigcup_{[\Lambda]\in C} \Lambda \subset \pp^n$ is an irreducible hypersurface. 

\begin{lem}\label{lem:uniqueplane}
  Let $C\subset \G(n-2,n)$ be an irreducible curve. If $X = \bigcup_{[\Lambda]\in C} \Lambda$ is not a linear space, then a generic point on $X$ lies on a unique $(n-2)$-plane $[\Lambda]\in C$.
\end{lem}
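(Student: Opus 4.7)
The plan is to argue by contradiction: assume the generic fiber of $\pi_1\colon \Phi_C \to X$ has size $d \geq 2$, and deduce that $X$ must be a linear space.

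First I would set up the double incidence
\[ W = (\Phi_C \times_X \Phi_C) \setminus \Delta_{\Phi_C}, \]
whose points are triples $(x,[\Lambda_1],[\Lambda_2])$ with $\Lambda_1 \neq \Lambda_2$ both containing $x$. The hypothesis $d \geq 2$ makes the projection $W \to X$ dominant with generic fiber of size $d(d-1) \geq 2$, so $\dim W = n-1$. On the other hand, the projection $W \to C \times C$ has image in $(C \times C) \setminus \Delta$, and the fiber over $([\Lambda_1],[\Lambda_2])$ is $\Lambda_1 \cap \Lambda_2$; since two distinct $(n-2)$-planes in $\P^n$ intersect in dimension at most $n-3$, we get $\dim W \leq 2 + (n-3) = n-1$. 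Comparing the two estimates forces $W \to C \times C$ to be dominant and the generic fiber to have dimension exactly $n-3$. Thus a generic pair $(\Lambda_1,\Lambda_2) \in C \times C$ meets in an $(n-3)$-plane; equivalently, $\operatorname{span}(\Lambda_1, \Lambda_2)$ is a hyperplane.

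Next I would pick three generic planes $\Lambda_1,\Lambda_2,\Lambda_3 \in C$ and set $\Lambda_0 = \Lambda_1 \cap \Lambda_2$ (dimension $n-3$) and $H = \operatorname{span}(\Lambda_1, \Lambda_2)$ (a hyperplane). If $\Lambda_3 \not\subset H$, then $\Lambda_3 \cap H$ has dimension exactly $n-3$; since both $\Lambda_3 \cap \Lambda_1$ and $\Lambda_3 \cap \Lambda_2$ have this same dimension and are contained in $\Lambda_3 \cap H$, the three subspaces coincide. This common intersection is contained in $\Lambda_1 \cap \Lambda_2 = \Lambda_0$ and has the same dimension, so it equals $\Lambda_0$; in particular $\Lambda_3 \supset \Lambda_0$. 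Hence every generic $\Lambda \in C$ either lies in $H$ or contains $\Lambda_0$. Both conditions cut out closed subsets of $\G(n-2,n)$, so by irreducibility of $C$ the whole curve lies in one of them.

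Finally I would derive the contradiction case by case. If $C \subset \{[\Lambda]\colon \Lambda \subset H\}$, then $X = \bigcup_{[\Lambda] \in C} \Lambda \subset H$, and since $X$ and $H$ are both irreducible of dimension $n-1$, we conclude $X = H$, making $X$ a linear space, contrary to hypothesis. If instead $C \subset \{[\Lambda]\colon \Lambda \supset \Lambda_0\}$, I consider the projection $\pi\colon \P^n \ratto \P^n/\Lambda_0 = \P^2$: each plane $\Lambda \in C$ maps to a single point, and distinct $(n-2)$-planes through $\Lambda_0$ map to distinct points, so the induced map $C \to \P^2$ is injective. For a generic $x \in X \setminus \Lambda_0$, the plane $\operatorname{span}(\Lambda_0, x)$ is the only $(n-2)$-plane containing both $\Lambda_0$ and $x$, hence the unique element of $C$ through $x$, giving $d = 1$ and another contradiction.

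I expect the main obstacle to be the dichotomy in the middle step: one has to extract the two sharp alternatives for $\Lambda_3$ from the $(n-3)$-dimensional intersection condition via careful linear algebra, and then use irreducibility of $C$ to pass from the generic point of $C$ to the entire curve, relying on both alternatives being Zariski closed Schubert-type conditions in $\G(n-2,n)$. The dimension bookkeeping in the first step and the final case analysis are then essentially formal.
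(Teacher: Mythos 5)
Your proof is correct and follows essentially the same route as the paper: a dimension count on the incidence correspondence shows that two generic members of $C$ meet in an $(n-3)$-plane, and this degeneracy is then played off against the hypothesis that $X$ is not a linear space. Your explicit three-plane argument for the dichotomy (all members lie in a common hyperplane, or all contain a common $(n-3)$-plane), together with the elimination of the cone case via uniqueness of $\operatorname{span}(\Lambda_0,x)$, carefully spells out a step that the paper compresses into the single assertion that pairwise $(n-3)$-dimensional intersections force a common hyperplane.
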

 
\begin{proof}
  Assume that a generic point on $X$ lies on more than one $(n-2)$-plane from $C$. Then a generic point on a generic $(n-2)$-plane from $C$ will lie on more than one such $(n-2)$-plane. Since this generic $(n-2)$-plane $\Lambda\subset \P^n$ has to be swept out by the intersections $\Lambda\cap \Lambda'$ with $[\Lambda']\in C$, a dimension count shows that a generic $[\Lambda]\in C$ intersects a generic $[\Lambda']\in C$ in a plane of dimension at least $(n-3)$. Since this is a closed condition, we conclude that every $\Lambda$ intersects every other $\Lambda'\in C$ in a $(n-3)$-plane. But this is only possible if all $(n-2)$-spaces in $C$ lie in a common $(n-1)$-space which contradicts the assumption that $X$ is not a linear space.
\end{proof}

\begin{rem}
  In \Cref{lem:uniqueplane}, the assumption that $X$ is not a linear space is necessary: Consider for example the set $C\subset \G(1,2) = (\P^2)^\ast$ of tangents to a plane conic. Then $X = \bigcup_{[\Lambda]\in C}\Lambda$ is just the plane $\P^2$ spanned by the conic and a generic point in $X$ lies on two tangents.
\end{rem}

In the rest of this section, we assume that $X = \bigcup_{[\Lambda]\in C} \Lambda$ for an irreducible curve $C\subset \G(n-2,n)$ and that $X$ is not a linear space. We use the following notation:
We consider $\G(n-2,n)\subset\pp^{N}$ via the Pl\"ucker embedding, where $N=\binom{n+1}{2}-1$.
Fix a point $e\in\pp^n\setminus X$. Let $\alpha^1(e)=\{L\in\G(1,n):\, e\in L\}$ and $\alpha^{n-2}(e)=\{L\in\G(n-2,n):\, e\in L\}$.
We have $\dim \alpha^1(e)=n-1$ and $\dim \alpha^{n-2}(e)=2n-4$. To see this, consider the projection $\pi_e\colon \pp^n \ratto \pp^{n-1}$ away from $e$. This shows that $\alpha^1(e) \cong \pp^{n-1}$ and $\alpha^{n-2}(e) \cong \G(n-3,n-1)$.

\begin{lem}\label{lem:normalvectors}
  For every $L\in\G(1,n)$, the set $W_L=\{L'\in\G(n-2,n):\, L'\cap L\neq\emptyset\}$ has codimension $1$ in $\G(n-2,n)$ and is cut out by a single hyperplane $H_L\subset\pp^N$ with respect to the Pl\"ucker embedding.
  The set $\fL_e$ of all hyperplanes $H_L$ as $L$ varies over $\alpha^1(e)$ is a linear subspace of $(\pp^N)^\vee$ of dimension $n-1$.
\end{lem}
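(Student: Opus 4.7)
The plan is to reduce both statements to the non-degeneracy of the exterior wedge pairing. Set $V=\R^{n+1}$, so that $\pp^n=\pp(V)$, and use the identifications $\G(1,n)\subset\pp(\Lambda^2 V)$ and $\G(n-2,n)\subset\pp(\Lambda^{n-1} V)$ via the respective Plücker embeddings. Note that both projective spaces have the same dimension $N=\binom{n+1}{2}-1$.

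First I would prove the codimension claim for $W_L$. Given $L\in\G(1,n)$ with Plücker vector $p_L\in\Lambda^2 V$ and $L'\in\G(n-2,n)$ with Plücker vector $p_{L'}\in\Lambda^{n-1}V$, lift to linear subspaces $V_L,V_{L'}\subset V$ of dimensions $2$ and $n-1$. Then $L\cap L'\neq\emptyset$ is equivalent to $V_L\cap V_{L'}\neq 0$, which in turn is equivalent to $\dim(V_L+V_{L'})\leq n$, i.e.~$p_L\wedge p_{L'}=0$ in $\Lambda^{n+1}V\cong\R$. With $L$ fixed, the map
\[
\varphi_L\colon \Lambda^{n-1}V\to \R,\qquad q\mapsto p_L\wedge q,
\]
is linear and nonzero since the pairing $\Lambda^2 V\times \Lambda^{n-1}V\to\Lambda^{n+1}V$ is perfect. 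Hence $W_L\subset\G(n-2,n)$ is the intersection with the hyperplane $H_L=\{q:\varphi_L(q)=0\}\subset\pp^N$, and since a generic $(n-2)$-plane misses $L$, the codimension is exactly $1$.

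For the second part, I would use that the perfect pairing above gives a projective linear isomorphism
\[
\Psi\colon \pp(\Lambda^2 V)\xrightarrow{\;\sim\;}(\pp(\Lambda^{n-1}V))^\vee,\qquad [p_L]\mapsto [H_L],
\]
so the assignment $L\mapsto H_L$ is (the restriction of) a linear isomorphism between the two $N$-dimensional projective spaces. It therefore suffices to show that $\alpha^1(e)\subset\pp(\Lambda^2 V)$ is a linear subspace of dimension $n-1$. Choosing a lift $\tilde e\in V$ of $e$, a line $L\in\alpha^1(e)$ has Plücker vector $\tilde e\wedge v$ for some $v\in V$ not proportional to $\tilde e$. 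Thus $\alpha^1(e)$ is the image of the linear map $V\to\Lambda^2 V,\ v\mapsto\tilde e\wedge v$, which has kernel $\R\tilde e$, giving an $n$-dimensional image and hence a $\pp^{n-1}\subset\pp(\Lambda^2 V)$. Applying $\Psi$ yields $\fL_e=\Psi(\alpha^1(e))$ as a linear subspace of $(\pp^N)^\vee$ of dimension $n-1$.

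No serious obstacle is expected; the only subtle point is to identify the linear equation cutting out $W_L$ with the image of $\alpha^1(e)$ under a single projective isomorphism, rather than treating the two statements separately. The perfect wedge pairing does exactly this.
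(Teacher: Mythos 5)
Your proposal is correct. It is in substance the same computation as the paper's, but packaged invariantly rather than in matrix coordinates: the paper represents $L$ by a $2\times(n+1)$ matrix $A$ whose first row is $e$, expresses the incidence $L\cap L'\neq\emptyset$ as the vanishing of the $(n+1)\times(n+1)$ determinant obtained by stacking $A$ on a matrix representing $L'$, and reads off from the Laplace expansion that this is linear in the Pl\"ucker coordinates of $L'$ with coefficients (the complementary maximal minors of $A$) linear in the second row of $A$; the dimension of $\fL_e$ is then obtained from the resulting linear map $U\cong\pp^{n-1}\to(\pp^N)^\vee$ for a hyperplane $U$ not through $e$, with injectivity argued separately via $W_{L_1}\neq W_{L_2}$. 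Your stacked-determinant condition is exactly the pairing $p_L\wedge p_{L'}\in\Lambda^{n+1}V$, so your route through the perfect pairing buys two things: the assignment $L\mapsto H_L$ becomes the restriction of a single projective-linear isomorphism $\Psi\colon\pp(\Lambda^2V)\to(\pp(\Lambda^{n-1}V))^\vee$, and the exact dimension $n-1$ falls out automatically from the standard fact that $\alpha^1(e)=\pp(\tilde e\wedge V)$ is a linear $\pp^{n-1}$ in the Pl\"ucker space of lines, with no separate injectivity argument needed. The paper's explicit matrix description is slightly more elementary and is reused later (e.g.\ the Gaussian-elimination step in the proof of \Cref{prop:hyp}), but as a proof of this lemma your version is complete; the only points to keep explicit are that $W_L$ is nonempty and proper (so the hyperplane section has codimension exactly $1$) and that every element of $\tilde e\wedge V$ is decomposable, so $\alpha^1(e)$ equals, rather than merely sits inside, that linear subspace --- both of which you address.
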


\begin{proof}
  We write the normal vector of the hyperplane $H_L\subset\P^N$ in terms of the Pl\"ucker coordinates of $L$. We represent $L$ by a $2\times(n+1)$ matrix $A$ of rank $2$, whose first row is equal to $e$ (which we can assume because of $e\in L$). The condition that a linear subspace $L'$ of $\P^n$ of dimension $n-2$ represented by the $(n-1)\times(n+1)$-matrix $A'$ intersects the line $L\subset \P^n$ says that the $(n+1)\times (n+1)$ matrix obtained from appending $A$ to $A'$ does not have full rank, i.e.~its determinant is $0$. This determinant is linear in the maximal minors of $A'$ (the Pl\"ucker coordinates of $L'$). The coefficients of these minors are the complementary maximal minors of $A$, which are linear in the second row of $A$. Therefore, $\fL_e$ is a linear subspace of $(\P^N)^\vee$. To compute its dimension, fix a hyperplane $U\subset \P^n$ that does not contain $e$ so that every choice of a point $x\in U$ defines the unique line $L_x$ through $e$ and $x$. The above matrix construction gives a linear map from $U\cong \P^{n-1}$ to $(\P^N)^\vee$, which has full rank. Indeed, the sets $W_{L_1}$ and $W_{L_2}$ are distinct if $L_1\neq L_2$ and so $H_{L_1}$ and $H_{L_2}$ are distinct too. 
\end{proof}

By construction, it is immediate that $\alpha^{n-2}(e)\subset W_L$ for all $L\in \alpha^1(e)$.
\begin{prop}\label{prop:hyp}
 The hypersurface $X = \bigcup_{[\Lambda]\in C} \Lambda$ is hyperbolic with respect to $e$ if and only if $H\cap C\subset C(\R)$ for all $H\in\fL_e(\R)$ with $C\not\subset H$. In particular, the curve $C$ has a smooth real point in that case.
\end{prop}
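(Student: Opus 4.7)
The plan is to reduce both implications to a statement about the incidence variety
\[
  I = \{([L], [\Lambda]) \in \alpha^1(e) \times C \colon L \cap \Lambda \neq \emptyset\},
\]
whose fiber over $[L]$ is precisely $H_L \cap C$. By \Cref{lem:uniqueplane}, the map $\phi\colon I \to X$, $([L], [\Lambda]) \mapsto L \cap \Lambda$, is birational, and over the Zariski open set $V = \{[L] \colon C \not\subset H_L\}$ the first projection $\pi_1\colon I \to \alpha^1(e)$ is finite and surjective.

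For the direction $(\Leftarrow)$ the argument is direct: for a real line $L \in V(\R)$, every $[\Lambda] \in H_L \cap C$ is real by hypothesis, so $\Lambda$ is a real $(n{-}2)$-plane. Since $e \notin X$, no line through $e$ is contained in $X$, so $L \not\subset \Lambda$, and $L \cap \Lambda$ is a single real point. Hence every geometric point of $L \cap X = \bigcup_{[\Lambda] \in H_L \cap C}(L \cap \Lambda)$ is real. This gives real fiberedness of $\pi_e\colon X \to \alpha^1(e)$ on the Zariski dense subset $V(\R)$, and because real-rootedness of the defining polynomial of $X$ along lines through $e$ is a closed condition, hyperbolicity then extends to all of $\alpha^1(e)(\R)$.

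For the direction $(\Rightarrow)$ I combine the uniqueness of \Cref{lem:uniqueplane} with a continuity argument. Let $U \subset X$ denote the proper closed locus where the plane through a point is not unique; being a proper subvariety of a hypersurface it has codimension at least two in $\P^n$, so a generic real line $L$ through $e$ avoids $U$. For such an $L$, hyperbolicity gives $\deg X$ real points of $L \cap X$, all outside $U$. For any such point $x$ the unique $[\Lambda] \in C$ with $x \in \Lambda$ also satisfies $\overline{\Lambda} \ni \overline{x} = x$, so uniqueness forces $\Lambda = \overline{\Lambda}$ and $[\Lambda]$ is real. Thus $H_L \cap C \subset C(\R)$ already holds on a Zariski dense open subset $V_0 \subset V(\R)$. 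To promote this to all of $V(\R)$, I argue by contradiction: a real $L_0 \in V(\R)$ with a non-real $[\Lambda_0] \in H_{L_0} \cap C$ would, by finiteness of $\pi_1$ over $V$, admit a continuously chosen $[\Lambda(L)] \in H_L \cap C$ close to $[\Lambda_0]$ for every real $L$ in a Euclidean neighborhood of $L_0$; these $[\Lambda(L)]$ remain non-real because the non-real locus of $C$ is Euclidean open, but the neighborhood necessarily meets the Euclidean dense set $V_0$, a contradiction. The ``in particular'' assertion then follows immediately: a generic real $L$ produces $\deg_{\P^N} C$ distinct real points of $C$, which lie in its smooth locus by Bertini.

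The delicate step is the continuity claim inside $(\Rightarrow)$. I expect this to follow from $\pi_1$ being a finite morphism between pure $(n-1)$-dimensional varieties over $V$, which guarantees that the fibers vary continuously (in the Hausdorff sense) as $[L]$ varies in $V$, including through singular points of $I$ or ramification points of $\pi_1$.
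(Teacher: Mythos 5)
Your proof is correct and follows essentially the same route as the paper: the reality-preserving correspondence between $L\cap X$ and $H_L\cap C$ via \Cref{lem:uniqueplane} and the realness of intersections of real linear subspaces, combined with reducing both conditions to generic lines/hyperplanes. The continuity step you flag is indeed fine --- it amounts to continuity of the zeros of the pencil of linear forms $H_L$ restricted to $C$ (equivalently, openness plus properness of the finite projection onto the smooth base $V$) --- and it is exactly the content of the paper's one-line assertion that checking $H\cap C\subset C(\R)$ for generic $H$ suffices.
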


\begin{proof}
  On the one hand, in order to prove hyperbolicity with respect to $e$, it is sufficient to show that $L\cap X\subset X(\R)$ for generic lines $L\in\alpha^1(e)$. On the other hand, it suffices to check that $H\cap C\subset C(\R)$ for generic $H$ in order to conclude that $H\cap C\subset C(\R)$ for all $H$ with $C\not\subset H$.
  So to prove the claim, we show that, for generic lines $L\in \alpha^1(e)$, the intersection points of $L$ with $X$ are in one-to-one correspondence with the intersection points of $H_L$ with $C$ and that this bijection preserves reality.
  A point $[\Lambda]\in H_L\cap C$ is an $(n-2)$-plane $\Lambda\subset X$ that intersects $L\ni e$. So it gives an intersection point of $L$ with $X$.
  If $[\Lambda]\in H_L\cap C(\R)$, then $\Lambda$ and $L$ intersect in a real point by Gaussian elimination. Conversely, by \Cref{lem:uniqueplane} and genericity of the line $L$, every point $p\in L\cap X$ lies in a unique $(n-2)$-plane $\Lambda\subset X$. So $[\Lambda]\in H_L\cap C$. If the point $p\in X(\R)$ is real, then $[\Lambda]\in C(\R)$ because a general real point of $X$ lies on an $(n-2)$-plane $\Lambda$ with $\dim_\R(\Lambda(\R)) = n-2$.
\end{proof}

\begin{prop}\label{prop:cone}
  If $C\subset \Gr(n-2,n)$ is contained in $W_L$ for some $L\in \alpha^1(e)$, then all $(n-2)$-planes $[\Lambda]\in C$ share a common point. In particular, 
  if the linear system $\fL_e$ restricted to $C$ has (projective) dimension smaller than $n-1$, then all $(n-2)$-planes in $C$ share a point.
\end{prop}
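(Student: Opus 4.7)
The plan is to establish the first statement by studying a natural rational map from $C$ to the line $L$, and then to derive the ``in particular'' clause from the first by a short linear-algebra argument.

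To prove the first part, I would suppose $C\subset W_L$ for some $L\in\alpha^1(e)$. First I would dispose of the trivial subcase where $L\subset \Lambda$ for every $[\Lambda]\in C$: then any point of $L$ is shared by all planes in $C$, and we are done. So I may assume that $L\not\subset\Lambda$ for a generic $[\Lambda]\in C$, whence $\Lambda\cap L$ is a single point. This yields a well-defined rational map $\phi\colon C\ratto L\cong \pp^1$ sending $[\Lambda]$ to $\Lambda\cap L$. Since $C$ is an irreducible curve, the image of $\phi$ is irreducible, and hence is either a single point or Zariski dense in $L$.

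The dense case is where the hypothesis $e\notin X$ comes in: if $\phi$ had dense image, then a Zariski dense subset of $L$ would lie in $X=\bigcup_{[\Lambda]\in C}\Lambda$, forcing $L\subset X$ and contradicting $e\in L\setminus X$. Hence the image of $\phi$ reduces to a single point $p\in L$, so $p\in\Lambda$ for a dense open set of $[\Lambda]\in C$. Since the Schubert condition $\{[\Lambda]\in\G(n-2,n)\colon p\in\Lambda\}$ is closed, it contains all of $C$, and every plane parametrised by $C$ passes through $p$.

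For the ``in particular'' clause, I would invoke \Cref{lem:normalvectors}, which identifies $\fL_e$ with a linear subspace of $(\pp^N)^\vee$ of projective dimension $n-1$. If the restriction to $C$ has strictly smaller projective dimension, then the restriction map on the underlying vector space has a nonzero kernel; any nonzero kernel element is some $H_L\in\fL_e$ with $C\subset H_L\cap\G(n-2,n)=W_L$, and the first part then yields the conclusion. The only substantive step is the use of $e\notin X$ to rule out the dense-image alternative for $\phi$; the rest is formal once the rational map has been set up.
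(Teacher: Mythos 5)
Your proof is correct and in substance the same as the paper's: both arguments show that the intersection point $\Lambda\cap L$ cannot vary as $[\Lambda]$ moves along the irreducible curve $C$, using crucially that $e\in L\setminus X$ forces $L\not\subset X$ --- the paper by covering $C$ with the finitely many closed sets of $(n-2)$-planes through the points of $X\cap L$, you by observing that the rational map $[\Lambda]\mapsto\Lambda\cap L$ must have a single point as image. Your kernel argument for the ``in particular'' clause is exactly the intended content of the paper's one-line reference to \Cref{lem:normalvectors}.
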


\begin{proof}
  The intersection $X\cap L$ is finite since $e\not\in X$. On the other hand, because every $[L']\in C$ intersects $L$, the curve $C$ is covered by the closed sets $Y_p$ of all $(n-2)$-planes containing the point $p$ for $p\in X\cap L$. Since $C$ is irreducible, it is already contained in one of these sets $Y_p$ which implies the claim.
  The second part follows by Lemma~\ref{lem:normalvectors} from the first. 
\end{proof}

We will need the following standard fact below.
\begin{prop}\label{prop:center}
  Let $Y\subset \P^n$ be an irreducible variety with center $\cent(Y)$ and let $H\subset \P^n$ be a hyperplane with $\cent(Y)\not\subset H$. The center of the intersection $Y\cap H$ is the intersection $\cent(Y)\cap H$.
\end{prop}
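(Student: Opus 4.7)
I will prove the two containments separately. The inclusion $\cent(Y)\cap H\subseteq \cent(Y\cap H)$ is formal: if $p\in \cent(Y)\cap H$ and $q\in (Y\cap H)\setminus\{p\}$, then the line through $p$ and $q$ lies in $Y$ by definition of $\cent(Y)$ and in $H$ by linearity, hence in $Y\cap H$.

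The substantive direction is $\cent(Y\cap H)\subseteq \cent(Y)\cap H$. Fix $p\in \cent(Y\cap H)$; since $\cent(Y\cap H)\subseteq Y\cap H$ we already have $p\in H$, so it remains to show that $p\in\cent(Y)$, i.e.\ that for every $q\in Y\setminus\{p\}$ the line $\overline{pq}$ is contained in $Y$. The hypothesis $\cent(Y)\not\subset H$ supplies a point $c\in\cent(Y)\setminus H$, which will serve as an auxiliary cone point.

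If $c$ lies on $\overline{pq}$ then $\overline{pq}=\overline{cp}\subset Y$ since $c\in\cent(Y)$ and $p\in Y$, and we are done. Otherwise $p,c,q$ span a plane $\pi$; because $c\notin H$, the intersection $L:=\pi\cap H$ is a line, and it contains $p$. The line $\overline{cq}$ lies in $Y$ (as $c\in\cent(Y)$, $q\in Y$), and since $c\notin L$ it meets $L$ in a single point $q'$. The collinearity $c,p,q'$ would force $q\in\overline{cp}$, contradicting our case assumption, so $q'\neq p$. Thus $q'\in Y\cap H$, and since $p\in\cent(Y\cap H)$ we conclude $L=\overline{pq'}\subset Y\cap H\subset Y$.

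Finally, for any $r\in\overline{pq}$ we have $c\neq r$ (because $c\notin\overline{pq}$), so the line $\overline{cr}\subset\pi$ meets $L$ at a unique point $r'\in L\subset Y$. Since $c\in\cent(Y)$, the line $\overline{cr'}=\overline{cr}$ lies in $Y$, so $r\in Y$. Thus $\overline{pq}\subset Y$, completing the proof.

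The only delicate step is the last one: the whole argument depends on exploiting the point $c\in\cent(Y)\setminus H$ to transport the center property of $p$ inside the slice $Y\cap H$ into a cone-point property of $p$ on all of $Y$, by sweeping the plane $\pi=\langle p,c,q\rangle$ with lines through $c$ that hit the line $L=\pi\cap H\subset Y$. The case distinction $c\in\overline{pq}$ vs.\ $c\notin\overline{pq}$ and the verification $q'\neq p$ are the only places where genericity-type arguments enter.
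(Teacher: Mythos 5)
Your proof is correct and follows essentially the same route as the paper's: both use an auxiliary cone point $c\in\cent(Y)\setminus H$ and work in the plane spanned by $c$, the point of $\cent(Y\cap H)$, and an arbitrary point of $Y$, using that this plane meets $H$ in a line lying in $Y\cap H$ and then coning from $c$. The only difference is presentational — you sweep the line $\overline{pq}$ point by point and treat the collinear degenerate cases explicitly, where the paper asserts the containment of the whole plane more tersely.
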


\begin{proof}
  The inclusion that $\cent(Y)\cap H$ is contained in the center of $Y\cap H$ is direct.
  Let $q$ be a point in the center of $Y\cap H$ and $p$ a point in the center of $Y$, which is not in $H$. Let $x$ be a point on $Y$, $x\neq p,q$.
  The line connecting $p$ and $x$ intersects $H$ in a unique point $y$, which is also in $Y$. Since the plane spanned by $p,q,x$ is the same as that spanned by $p,q,y$, the facts that $p$ is in the center of $Y$ and that $q$ in the center of $Y\cap H$ imply that this plane is contained in $Y$. This shows that $q$ is in the center of $Y$.
\end{proof}

\begin{prop}\label{prop:noline}
  Let $Y\subset \P^n$ be a real curve such that every hyperplane intersects $Y$ only in real points. Then $Y$ is a line.
\end{prop}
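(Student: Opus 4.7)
The plan is to prove the contrapositive: if $Y$ is not a line, I will exhibit a real hyperplane $H$ with $Y\not\subset H$ whose intersection with $Y$ contains a non-real point.

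First, I would reduce to the case that $Y$ is nondegenerate in $\P^n$ by replacing $\P^n$ with the linear span of $Y$. The hypothesis descends under this replacement, because every hyperplane of the span is the trace of a hyperplane of $\P^n$ on the span, producing the same intersection with $Y$. If the resulting ambient projective space has dimension $m\leq 1$, then $Y$ is a line (or a point, which is excluded because $Y$ is a curve), so I may assume $m\geq 2$.

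Second, I would produce a non-real point on $Y$. Since $Y$ is a real algebraic curve, the complex variety $Y(\C)$ has topological real dimension two, while the real locus $Y(\R)$ has real dimension at most one; hence $Y(\C)\setminus Y(\R)$ is nonempty, and I pick a point $p$ in it. The complex conjugate $\bar p\in Y(\C)$ is distinct from $p$, and the two points span a projective line $L\subset\P^m$ that is defined over $\R$, because the complex $2$-dimensional subspace they span is stable under conjugation.

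Finally, since $m\geq 2$, the real hyperplanes of $\P^m$ containing $L$ form a nonempty family, parametrized by a $\P^{m-2}(\R)$. Picking any such $H$, nondegeneracy of $Y$ forces $Y\not\subset H$, so $H\cap Y$ is finite; but it contains the non-real point $p\in L\subset H$, contradicting the hypothesis. The argument is essentially a dimension count combined with invoking complex conjugation, and there is no real obstacle: the only routine task is verifying that the nondegenerate reduction indeed preserves the hypothesis, and noting that in the reducible case the same argument applies componentwise so the conclusion must be read as applying to each irreducible component of $Y$.
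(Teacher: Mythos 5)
Your proof is correct and follows essentially the same route as the paper's: produce a non-real point of $Y$, take the real line spanned by it and its complex conjugate, and use a real hyperplane containing that line to contradict the hypothesis; your preliminary reduction to the linear span of $Y$ plays the role, in the paper's two-line proof, of ensuring the chosen hyperplane does not contain all of $Y$. The only (harmless) slip is the claim that $H\cap Y$ is finite --- a component of $Y$ could still lie in $H$ --- but the contradiction only needs the non-real point $p\in H\cap Y$, so nothing is affected.
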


\begin{proof}
  If $Y$ is not a line, we can find a non-real point $x\in Y$ such that the real line spanned by $x$ and its complex conjugate $\overline{x}$ is not equal to $Y$. This line is contained in a hyperplane that intersects $Y$ in at least two non-real points.
\end{proof}

\begin{thm}\label{thm:onedimrule}
  Let $C\subset \Gr(n-2,n)$ be an irreducible real curve and $X = \bigcup_{[\Lambda]\in C} \Lambda\subset \P^n$. 
  If $X$ is hyperbolic, then it is the cone over a plane hyperbolic curve. In this case its hyperbolicity cone is a spectrahedron.
\end{thm}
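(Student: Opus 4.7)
My plan is to proceed by induction on $n$, with base case $n = 2$: here $X$ is already a plane hyperbolic curve and the conclusion is trivial, while the spectrahedral claim is exactly the content of the Helton--Vinnikov theorem \cite{HV07}. For $n \geq 3$, assume the theorem holds in $\P^{n-1}$. If $X$ is itself a hyperplane the conclusion is immediate (the hyperbolicity cone is a half-space, a trivial spectrahedron), so I assume $X$ is not a linear space and fix a center $e$ with respect to which $X$ is hyperbolic. I will then analyze the restriction to $C\subset \Gr(n-2,n)\subset \P^N$ of the $n$-dimensional real linear system $\fL_e \subset (\P^N)^\vee$ from \Cref{lem:normalvectors}.

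The key dichotomy is on the projective dimension of this restriction. If it is strictly less than $n - 1$, \Cref{prop:cone} immediately forces all $(n-2)$-planes in $C$ to share a common point $p\in \P^n$, so that $X$ is a cone with apex $p$. I rule out the opposite case by contradiction: if the restriction has projective dimension exactly $n - 1$, the $n$ linearly independent restrictions induce a non-degenerate rational map $\phi\colon C \dashrightarrow \P^{n-1}$ whose image $\wt C$ is an irreducible curve spanning $\P^{n-1}$. The reality-preserving bijection from the proof of \Cref{prop:hyp}, combined with \Cref{prop:hyp} itself, shows that every real hyperplane of $\P^{n-1}$ meets $\wt C$ only in real points; then \Cref{prop:noline} forces $\wt C$ to be a line, contradicting non-degeneracy because $n - 1 \geq 2$.

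So $X$ is always a cone with some apex $p$, and projection from $p$ produces an irreducible hypersurface $X' = \pi_p(X) \subset \P^{n-1}$ of dimension $n - 2$ that is ruled by the irreducible curve $C' = \{\pi_p(\Lambda) : [\Lambda] \in C\} \subset \Gr(n-3, n-1)$. After choosing coordinates with $p = (0 \colon \cdots \colon 0 \colon 1)$, the defining polynomial of $X$ does not involve $x_n$ and also defines $X'$, which is therefore hyperbolic with respect to $\pi_p(e)$ (well-defined because $e \neq p$, since $e\notin X$ while $p\in X$). The induction hypothesis applied to $X'$ yields that $X$ is a cone over some plane hyperbolic curve $Y$. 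A definite symmetric determinantal representation of $Y$ from Helton--Vinnikov, interpreted as a pencil of symmetric matrices in the three coordinates $x_0, x_1, x_2$, is automatically a definite determinantal representation of $X$ in $n + 1$ variables, so the hyperbolicity cone of $X$ is a spectrahedron.

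The main obstacle I anticipate is the contradiction in the second paragraph: it is what forces the ruling to trivialize, and it hinges on a careful interplay between \Cref{prop:hyp} (reality of intersections), \Cref{prop:noline} (characterization of lines via reality), and the non-degeneracy of the auxiliary map $\phi$ induced by $\fL_e$. Once this collapse is in hand, the rest of the argument is a routine induction plus pulling back a Helton--Vinnikov representation.
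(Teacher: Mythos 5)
Your proposal is correct, and its core mechanism is the same as the paper's: the dichotomy on the dimension of $\fL_e$ restricted to $C$, settled by \Cref{prop:cone} on one side and by \Cref{prop:hyp} plus \Cref{prop:noline} on the other, which is exactly how the paper produces a common point of all the ruling planes. Where you genuinely diverge is in globalizing this to ``cone over a plane curve'': the paper argues by contradiction, assuming $\dim\cent(X)\leq n-4$, slicing with a $3$-plane through $e$ that misses $\cent(X)$, applying the common-point statement to the resulting ruled surface in $\P^3$, and contradicting \Cref{prop:center}; you instead run an induction on $n$, projecting from the apex $p$ and observing that the defining polynomial of a cone with vertex $p=(0:\cdots:0:1)$ does not involve $x_n$, so that hyperbolicity (with respect to $\pi_p(e)$, which is well defined since $e\notin X$) and the ruling descend to $X'\subset\P^{n-1}$. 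Your route avoids \Cref{prop:center} and the linear-section argument altogether and delivers the vertex of the cone directly, at the price of checking the descent of the data under projection, which you do correctly; the paper's route only needs the $n=3$ instance of the common-point claim and never has to discuss how hyperbolicity behaves under projection. One point you should make explicit: for the projection to be a real map (and for $C'$ to be a real curve, $\pi_p(e)$ a real direction), the apex must be chosen real. This is automatic but not free: since $C$ is a real curve, the intersection $\bigcap_{[\Lambda]\in C}\Lambda$ is a nonempty linear subspace stable under complex conjugation, hence defined over $\R$ and containing a real point, which you take as $p$. Your final step (padding the Helton--Vinnikov pencil of the plane curve with zero matrices in the remaining variables) is a concrete version of the paper's remark that cones over spectrahedral cones are spectrahedral, and is fine.
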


\begin{proof}
  Since the claim holds for hyperplanes, we can assume that $X$ is not a linear space. First we show that if $n>2$, then all $(n-2)$-planes that correspond to points in $C$ have a common point in $\P^n$. If the curve $C$ lies in $W_L$ for some $L\in \alpha^1(e)$, then the center of $X$ contains a point. We then project from such points until $C$ is not contained in $W_L$ for any $L\in \alpha^1(e)$. 
  Then the linear system $\fL_e$ restriced to $C$ has dimension $n-1$ by Proposition \ref{prop:cone}. We write $\iota\colon C \to \P^{n-1}$ for the map corresponding to this base-point free linear system. If $X$ is hyperbolic, then by Proposition \ref{prop:hyp} the map $\iota$ has the property that every hyperplane intersects $\iota(C)$ only in real points. By Proposition~\ref{prop:noline} this cannot be since $\iota(C)$ is not a line now that it is not contained in $W_L$ for any $L\in \alpha^1(e)$.
  
  Now assume that $X\subset\pp^n$ is not the cone over a plane curve. Then the center $\cent(X)$ of $X$ has dimension at most $n-4$. Consider a three-plane $E\subset\pp^n$ that contains $e$ and that does not intersect $\cent(X)$.
  Then $X' = X\cap E\subset\pp^3$ is a hypersurface ruled by a one-dimensional family $C'\subset \Gr(1,3)$ of lines in $\P^3$ obtained from $C$ by the rational map $\Lambda \mapsto \Lambda \cap E$ from $\Gr(n-2,n)$ to $\Gr(1,3)$. But it has at least one point in its center by the first part of the proof, which is a contradiction to Proposition~\ref{prop:center}.

  The last part follows from the Helton--Vinnikov Theorem \cite{HV07} which says that hyperbolicity cones of ternary hyperbolic polynomials, and thus cones thereover, are spectrahedral.
\end{proof}

\begin{ex}
  In \cite{convexhullcircle} the convex hull $K$ of the union of two plane circles in $\R^3$ (not both in the same plane) was studied. They show that this is a spectrahedron if and only if the Zariski closure $X\subset\pp^3$ of the two circles is a complete intersection \cite[Thm.~2.1]{convexhullcircle}. This result also follows from our \Cref{thm:onedimrule}. Indeed, the Zariski closure of $\partial K$ contains a surface $S\subset\pp^3$ that is ruled by lines that intersect each of the (Zariski closures of the) two circles. If $K$ is a spectrahedron, then $S$ must be hyperbolic and thus by \Cref{thm:onedimrule} it is the cone over a plane hyperbolic curve. Clearly, this plane curve can be chosen to be one of the two circles. Thus $S$ is a singular quadric and $X$ the complete intersection of $S$ with the union of the two planes spanned by the circles. The other direction is straight-forward.
\end{ex}

In general, we get the following result for convex hulls of curves in three-space.

\begin{cor}\label{cor:hypisspec}
 The closed convex hull of a one-dimensional semi-algebraic set in $\R^3$ is a spectrahedron if and only if it is an affine slice of a hyperbolicity cone.
\end{cor}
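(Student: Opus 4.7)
The ``only if'' direction is immediate: any spectrahedron is by definition an affine slice of a spectrahedral cone, which is in particular a hyperbolicity cone. The content is in the converse, so assume $K = \conv(S)$ with $S \subset \R^3$ one-dimensional semi-algebraic, and $K = C \cap L$ for some hyperbolicity cone $C \subset \P^N(\R)$ and linear subspace $L \cong \P^3$.

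First I would reduce to the case where $K$ is itself a hyperbolicity cone in $\P^3$. Choose $e$ in the interior of $K$, so that $e \in L$. Every line in $L$ through $e$ is also a line in $\P^N$ through $e$, and hence meets the defining hypersurface $X \subset \P^N$ of $C$ only in real points. Thus $X \cap L$ is hyperbolic with respect to $e$ in $L \cong \P^3$, and $K$ is its hyperbolicity cone. After discarding the components of $X \cap L$ that do not meet $\partial K$, I may assume that the algebraic boundary of $K$ in $\P^3$ is precisely the Zariski closure $Y = Y^{(1)} \cup \cdots \cup Y^{(k)}$ of $\partial K$, a union of irreducible hyperbolic hypersurfaces.

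The central geometric step is to show that each $Y^{(i)}$ is ruled by a $1$-parameter family of lines, so that \Cref{thm:onedimrule} applies. Since $S$ is one-dimensional, Caratheodory's theorem forces every non-extreme point of $\partial K$ to lie on a chord of $S$, while the extreme points themselves lie in $S$. Hence the two-dimensional surface $Y^{(i)}$ is covered, off a lower-dimensional subset, by chords of $\overline{S}$. Since pairs of points on $\overline{S}$ form only a $2$-parameter family in $\G(1,3)$ and $Y^{(i)}$ is a hypersurface in $\P^3$, a dimension count shows that the chords actually sweeping $Y^{(i)}$ form an irreducible $1$-parameter subfamily $C_i \subset \G(1,3)$ with $Y^{(i)} = \bigcup_{[\Lambda] \in C_i} \Lambda$. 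This is the step I expect to require the most care, in particular to handle possibly planar components of $Y$ or boundary pieces contained in $\overline{S}$ itself, and to rule out the degenerate possibility that the sweeping chords collapse into a higher-dimensional family.

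Finally, \Cref{thm:onedimrule} applied to each $(Y^{(i)}, C_i)$ yields that $Y^{(i)}$ is a cone over a plane hyperbolic curve and its hyperbolicity cone $K^{(i)}$ is a spectrahedron. To conclude, I would verify $K = \bigcap_i K^{(i)}$: for any $x \in \bigcap_i K^{(i)}$ the segment from $e$ to $x$ avoids each $Y^{(i)}$, hence avoids $\partial K \subset \bigcup_i Y^{(i)}$, so $x \in K$; the reverse inclusion holds because $K$ lies on the $e$-side of each $Y^{(i)}$. A finite intersection of spectrahedra is a spectrahedron, finishing the proof.
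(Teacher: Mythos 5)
Your proposal is correct and follows essentially the same route as the paper: restrict the hyperbolic hypersurface to the $3$-dimensional slice, observe that the components of the algebraic boundary of the convex hull are either planes (coming from $2$-dimensional faces) or surfaces swept out by a $1$-dimensional family of $1$-dimensional faces (your chords, obtained via Carath\'eodory), apply \Cref{thm:onedimrule} to the ruled components, and recover the convex hull as a finite intersection of spectrahedra. The only differences are presentational: you make the reduction from the affine-slice hypothesis and the final equality $K=\bigcap_i K^{(i)}$ explicit, while the paper phrases the ruling directly in terms of the facial structure of the convex hull.
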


\begin{proof}
  Since a spectrahedron is always an affine slice of a hyperbolicity cone, one implication is trivial. For the other implication, let $S\subset \R^3$ be a one-dimensional semi-algebraic set. We now assume that the algebraic boundary of the closed convex hull $C$ of $S$ is a hyperbolic polynomial. It has irreducible components that come from the $2$-dimensional faces of $C$. Their Zariski closures are affine hyperplanes. The other irreducible components are the Zariski closure of $1$-dimensional families of $1$-dimensional faces and therefore hypersurfaces in $\A^3$ with a $1$-dimensional ruling. Since every factor of a hyperbolic polynomial is again hyperbolic, our \Cref{thm:onedimrule} implies that these hypersurfaces are spectrahedral. So $C$ is the intersection of spectrahedra and therefore a spectrahedron itself. 
\end{proof}

\begin{ex}\label{ex:ellconv3}
 Now let $C\subset\pp^3$ be an elliptic normal curve. Then $C$ is the complete intersection of two quadratic forms $p$ and $q$. The span of $p$ and $q$ over $\C$ contains exactly four singular irreducible quadrics $p_1,\ldots,p_4$. Now let $K$ be the convex hull of a very compact component $C_0$ of $C(\R)$. The boundary of $K$ is contained in the \emph{edge surface}, i.e. the union of all stationary bisecant lines, see \cite{raneconv}. These are lines that intersect $C$ in two points that share a tangent plane. Clearly, the lines from the ruling of a singular quadric that contain $C$ are stationary bisecants. On the other hand, the edge surface has degree eight by \cite[Thm.~2.1]{raneconv} and therefore it is the zero set of $q_1\cdots q_4$. The Zariski closure of the boundary of $K$ will then consist of those two singular quadrics $q_i$ and $q_j$ that contain (real) lines intersecting $C_0$ in an even number of points. These are hyperbolic and thus $K$ is a spectrahedron by \Cref{cor:hypisspec}. 
 \begin{figure}[h]
  \begin{center}
    \includegraphics[width=5cm]{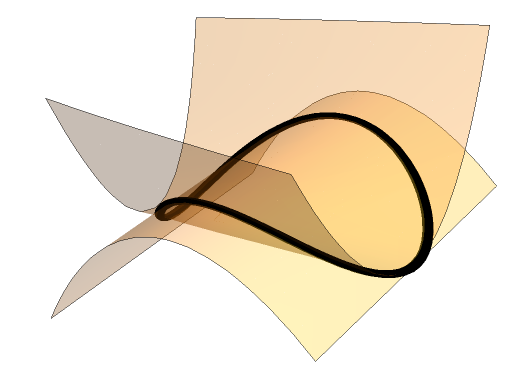}
  \end{center}
  \caption{A connected component of an elliptic normal curve and the algebraic boundary of its convex hull.}
  \label{fig:elliptichull}
  \end{figure}

\end{ex}

There are more interesting hyperbolic hypersurfaces ruled by some $R\subset \Gr(k,n)$ if we allow $\dim(R)>1$, as the following example shows.

\begin{ex}
  Here we describe hyperbolic hypersurfaces in $\pp^4$ that arise as the join of two plane hyperbolic curves $C_1$ and $C_2$, i.e. we have a ruling of lines isomorphic to $C_1\times C_2$. In $\pp^4$ we consider the two planes $H_1=\cV(x_1,x_2)$ and $H_2=\cV(x_3,x_4)$. In each $H_i$ we consider a plane curve that is hyperbolic with respect to $e=(1,0,0,0,0)$. We claim that the join $X=J(C_1,C_2)\subset\pp^4$ is a hypersurface hyperbolic with respect to $e$. Indeed, let $L\subset\pp^4$ be a line passing through $e$. We consider the set $V_L$ of pairs $(p,q)\in H_1\times H_2$ such that the span of $p$ and $q$ intersects $L$. One checks that $V_L=L_1\times L_2$ for two lines $L_i\subset H_i$, $i=1,2$, both containing $e$. Thus if $x\in L\cap X$, then $x$ is in the span of two points $p_i\in L_i\cap C_i$, $i=1,2$, which are both real because $C_i$ is hyperbolic. Thus $x$ is real.
\end{ex}

We will see more such examples in the following section where we discuss secant varieties of curves.

\section{Hyperbolic secant varieties of $M$-curves}\label{sec:hypsecMcurve}
The following construction due to Gabard \cite{gabard} will be our guiding example for constructing curves whose secant varieties are hyperbolic.
\begin{ex}\label{ex:mcurvemap}
Let $X$ be a smooth $M$-curve of genus $g$ and let $X(\R) = S_0\cup S_1 \cup \ldots \cup S_g$ be a decomposition into connected components so that $S_i$ is topologically isomorphic to $S^1$. Consider the divisor $$D=P_1+\ldots+P_n+Q_1+\ldots+Q_g,$$ where $P_i$ are distinct points on $S_0$ and the point $Q_j$ lies on $S_j$ for every $j\geq 1$. By Riemann's inequality, we have
\[
\ell(D) \geq n+1.
\]
In particular, the corresponding line bundle $\sL(D)$ has a global section that vanishes at any choice of $n$ points on $S_0$.
Choose two global sections $a_0$ and $a_1$ of $\sL(D)$, which vanish at $P_1^{(0)},P_2^{(0)},\ldots,P_n^{(0)}$ and $P_1^{(1)},P_2^{(1)},\ldots,P_n^{(1)}$, respectively, with the property that they alternate along the circle $S_0 = S^1$, i.e.~the order in clockwise (or counterclockwise) direction is $P_1^{(0)},P_1^{(1)},P_2^{(0)},P_2^{(1)},\ldots$.
Since they have $n+g$ zeros and the parity of zeros on any given connected component is constant under linear equivalence (see \cite[Lemma~4.1]{grossharris}), they must have one zero on each of the connected components $S_1,S_2,\ldots,S_g$ each. In particular, their zeros interlace on every connected component of $X$.
Now extend the chosen sections to a basis $(a_0,a_1,a_2,\ldots,a_m)$ of $H^0(X,\sL(D))$.
Then the image of $X$ under the map $\phi_{\sL(D)} \colon X \to \P(H^0(X,\sL(D))^\vee)$ is hyperbolic with respect to the subspace $E = \sV(x_0,x_1)\subset \P(H^0(X,\sL(D))^\vee)$.
Indeed, the projection away from $E$ is the map $\wh{\phi} \colon  X\to \P^1$, $x\mapsto (a_0(x):a_1(x))$, which is real fibered because
\[
\wh{\phi}^{-1}(\lambda:\mu) = \{x\in X \colon \mu a_1(x) - \lambda a_0(x) = 0\},
\]
which consists only of real points for $(\lambda:\mu)\in\P^1(\R)$ because the zeros of $a_0$ and $a_1$ interlace, see \cite[Lemma~2.10]{kummersep}.
\end{ex}

In the previous example we have constructed a linear system $\fD$ of dimension one on an $M$-curve $X$ that has the property that every $D\in\fD$ satisfies $\textrm{supp}(D)\subset X(\R)$. We will call such a linear system on a curve \emph{totally real}. Totally real linear systems can only exist on curves of dividing type. Conversely, every curve of dividing type admits a totally real linear system. This essentially goes back to Ahlfors \cite{Ahl50}. In the following we want to generalize this notion to linear systems of higher dimension. However, a linear system of dimension at least two cannot have the property that all its members have totally real support by \Cref{prop:noline}.

\begin{Def}\label{def:vastlyreal}
A linear system $\fD$ of dimension $2k+1$ on a curve $X$ is \emph{{\placeholderone}} if $|\textrm{supp}(D)\setminus X(\R)|\leq 2k$ for all $D\in\fD$.
\end{Def}

\begin{rem}
 A {\placeholderone} linear system on $X$ of dimension one, i.e. $k=0$, is a real fibered morphism $X\to\pp^1$. The case $k=1$ has been extensively studied by Mikhalkin and Orevkov in \cite{mikhalkin2019maximally} where they give numerous equivalent conditions to being {\placeholderone}. For example they characterize the topology of the corresponding embeddings to $\pp^3$: These are \emph{maximally writhed algebraic links}.
\end{rem}

In the following we will characterize {\placeholderone} linear systems in terms of hyperbolicity of secant varieties. Unlike in the case $k=0$, we will deduce from \cite{mikhalkin2019maximally} that, for $k=1$, {\placeholderone} linear systems can only exist on $M$-curves.

\begin{lem}\label{lem:generalsecant}
Let $k$ be a positive integer with $2k+1\leq n$ and let $X\subset \P^{n}$ be an irreducible and nondegenerate curve. A general point $p\in\Sec_k(X)$ lies only on $k$-secants $\Lambda$ to $X$ with $|\Lambda\cap X| = k+1$.
\end{lem}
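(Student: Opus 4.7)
The plan is to reduce the statement to Harris's general position theorem (see~\cite{ACGHI}): for a general hyperplane $H\subset\P^n$, the points of $X\cap H$ lie in uniform general position in $H\cong\P^{n-1}$, so in particular any $m\le n$ of them are linearly independent. Since $k\ge 1$ and $2k+1\le n$ together force $k+2\le n$, no $k$-plane in $H$ can contain more than $k+1$ of the points of $X\cap H$. Consequently, any $k$-secant $\Lambda$ of $X$ that is contained in such a hyperplane $H$ automatically satisfies $|\Lambda\cap X|=k+1$: the equality $\Lambda\cap X=\Lambda\cap(X\cap H)$ combined with uniform position bounds the intersection by $k+1$ points, while $\Lambda$ must contain the $k+1$ points that span it. Such $k$-secants in $H$ exist because $X\cap H$ has $\deg X\ge n\ge k+1$ points, and any $k+1$ of them span a $k$-plane by uniform position.

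Next, I would introduce $\Sigma_{k+1}\subset\Gr(k,n)$, the Zariski closure of the locus of $k$-planes spanned by $k+1$ points of $X$, together with its closed sublocus $\Sigma_{k+2}\subset\Sigma_{k+1}$ consisting of those $\Lambda$ for which the schematic intersection $\Lambda\cap X$ has length at least $k+2$; closedness of $\Sigma_{k+2}$ follows from upper semicontinuity of intersection length. The variety $\Sigma_{k+1}$ is irreducible of dimension $k+1$, being the image of a dense open subset of the irreducible variety $X^{k+1}$ under the generically finite span map: generic finiteness holds because $\Lambda\cap X$ is finite whenever $X\not\subset\Lambda$, which is automatic since $\dim\Lambda=k<n$ and $X$ is nondegenerate. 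Each $k$-secant produced in the previous paragraph has $\Lambda\cap X$ reduced of length exactly $k+1$, so these $\Lambda$ lie in $\Sigma_{k+1}\setminus\Sigma_{k+2}$, and irreducibility of $\Sigma_{k+1}$ forces $\dim\Sigma_{k+2}\le k$.

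Finally, the bad locus $B=\bigcup_{\Lambda\in\Sigma_{k+2}}\Lambda\subset\P^n$, obtained as the image under the second projection of the tautological $\P^k$-bundle over $\Sigma_{k+2}$, satisfies $\dim B\le 2k<2k+1=\dim\Sec_k(X)$; the latter equality uses nondegeneracy of $X$ together with the hypothesis $2k+1\le n$. Thus $B\subsetneq\Sec_k(X)$, and a general $p\in\Sec_k(X)$ avoids $B$. Every $k$-secant through such a $p$ then has $\Lambda\cap X$ of length $k+1$, and hence exactly $k+1$ distinct points. The hardest step is invoking Harris's general position theorem, which is classical but nontrivial; the remainder is a direct dimension count.
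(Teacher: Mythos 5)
Your proposal is correct and follows essentially the same route as the paper: both arguments bound the family of special $k$-secants (those meeting $X$ in more than $k+1$ points) by dimension $k$ using the General Position/Uniform Position theorem of \cite[Chapter III, \S1]{ACGHI}, and then conclude by the count that the union of these special secants has dimension at most $2k<2k+1=\dim\Sec_k(X)$, the last equality being the nondefectivity of secant varieties of nondegenerate curves (cited as \cite{lange} in the paper). The only minor difference is how properness of the special locus inside the $(k+1)$-dimensional family of secant planes is established: you exhibit a single secant with exactly $k+1$ reduced intersection points inside a general hyperplane section and use irreducibility, whereas the paper invokes generic injectivity of the span map $X^{k+1}\dashrightarrow\Gr(k,n)$ directly.
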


\begin{proof}
The secant varieties of an irreducible and nondegenerate curve have the expected dimension $2k+1$ as long as $2k+1\leq n$, see \cite{lange}. So every general point $p\in\Sec_k(X)$ lies on finitely many $k$-secants to $X$.

Consider the rational map from the $(k+1)$-fold product of $X$ to $\Gr(k,2k+1)$ given by mapping a tuple of points $(x_0,x_1,\ldots,x_k)$ to the Pl\"ucker coordinates of the $k$-plane they span. This is a generically one-to-one map by the General Position Lemma \cite[Chapter III, \S1]{ACGHI}. The $k$-secants of $X$ that contain more than $k+1$ points are contained in the part of the image, where the map fails to be one to one. In particular, the set of special $k$-secants to $X$ has dimension at most $k$. Therefore, the union in $\P^{n}$ over all special $k$-secants to $X$ has dimension at most $2k$. So a general point does not lie on such a special $k$-secant.
\end{proof}

\begin{lem}\label{lem:realgeneral}
Let $X\subset \P^{n}$ be an irreducible nondegenerate real curve.
Let $k$ be a positive integer with $2k+1\leq n$. A general real point $p\in\Sec_k(X)(\R)$ either lies on a $k$-secant $\Lambda$ of $X$ with the property that $\Lambda\cap X = \ol{\Lambda}\cap X$ or on a $k$-secant $\Lambda$ of $X$ with the property $(\Lambda\cap X)\cap (\ol{\Lambda}\cap X) = \emptyset$.
\end{lem}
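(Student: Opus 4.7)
My plan is to combine \Cref{lem:generalsecant} with a dimension estimate on divisors whose ``conjugation-symmetric part'' has intermediate size. By \Cref{lem:generalsecant}, we may assume that $p$ lies in the open dense subset of $\Sec_k(X)$ over which every $k$-secant $\Lambda$ through $p$ meets $X$ in exactly $k+1$ points. Setting $D=\Lambda\cap X$ and $S=D\cap\ol D$, a direct set-theoretic computation shows $(\Lambda\cap\ol\Lambda)\cap X=S$ (any point of $\Lambda\cap\ol\Lambda\cap X$ lies both in $\Lambda\cap X=D$ and in $\ol\Lambda\cap X=\ol D$, and conversely $S\subseteq \Lambda\cap\ol\Lambda$ since $S=\ol S\subseteq \ol D$). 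The first case of the lemma corresponds to $D=\ol D$ (so $\Lambda$ is a real $k$-plane) and the second case to $S=\emptyset$; the task is therefore to exclude the \emph{mixed case} $D\neq\ol D$ with $m:=|S|\in\{1,\dots,k\}$ for a general real $p$.

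In the mixed case, $D\neq\ol D$ forces $\Lambda\neq\ol\Lambda$ (otherwise $\Lambda\cap X$ would be conjugation-invariant), and by nondegeneracy of $X$ together with the General Position Lemma \cite[Ch.~III, \S1]{ACGHI}, the points of $S$ lie in linearly general position, so $\mathrm{span}(S)$ has dimension $m-1$ and is contained in $\Sec_{m-1}(X)\subseteq \Sec_{k-1}(X)$. Since $p\in\Lambda$ is real, we have $p\in(\Lambda\cap\ol\Lambda)(\R)$, and this real subspace contains $\mathrm{span}(S)$ and generically equals it.

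I would perform the dimension count on the real-algebraic stratum $T_m\subset X^{(k+1)}$ of divisors of the form $D=S+D^*$, with $S$ a real divisor of degree $m$ and $D^*$ a complex divisor of degree $k+1-m$ avoiding $\ol S\cup\ol{D^*}$; it has real dimension $m+2(k+1-m)=2(k+1)-m$. For generic $D\in T_m$, the secant $\Lambda=\mathrm{span}(D)$ satisfies $\Lambda\cap\ol\Lambda=\mathrm{span}(S)$, so any real $p\in\Lambda$ lies in $\Sec_{m-1}(X)\subseteq\Sec_{k-1}(X)$, a proper subvariety of $\Sec_k(X)$ that is avoided by a general real $p$. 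For non-generic $D$ where $\dim_\C(\Lambda\cap\ol\Lambda)=m-1+j$ with $j\geq 1$, the extra incidence is a closed complex condition on the pair $(\Lambda,\ol\Lambda)$ and hence imposes real codimension at least $2j$ on $T_m$; the locus of pairs $(D,p)$ with $p$ real and $p\in(\Lambda\cap\ol\Lambda)(\R)$ then has real dimension at most $[2(k+1)-m-2j]+(m-1+j)=2k+1-j$, strictly less than $\dim_\R\Sec_k(X)(\R)=2k+1$.

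Summing these contributions over $m\in\{1,\dots,k\}$ and $j\geq 0$, the bad image in $\Sec_k(X)(\R)$ is contained in a proper real-algebraic subset, so it is avoided by a general real $p$. The most delicate step is the codimension bound for the non-generic substrata $T_m^j$; I would verify it by noting that $\Lambda$ and $\ol\Lambda$ are conjugate to each other (so the pair is determined by $D$), and that each extra complex dimension of $\Lambda\cap\ol\Lambda$ beyond $\mathrm{span}(S)$ is a Schubert-type incidence condition on this pair, which pulls back to a closed complex (hence real codimension~$2$) condition on $D\in T_m$.
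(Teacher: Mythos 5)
Your reduction to the ``mixed case'' and the overall plan (use \Cref{lem:generalsecant}, then kill the mixed case by a dimension count) matches the paper's strategy, and the generic stratum is handled correctly: there $\Lambda\cap\ol{\Lambda}=\mathrm{span}(S)$ forces a real $p$ into $\Sec_{m-1}(X)\subseteq\Sec_{k-1}(X)$. (Minor remark: the linear independence of the points of $S$ does not need the General Position Lemma --- it is automatic because $S$ is a subset of the $k+1$ points of $D$, which span the $k$-plane $\Lambda$; the General Position Lemma applies to \emph{general} configurations and cannot be invoked for the specific divisor attached to a given $p$ without further argument.) The genuine gap is exactly the step you flag as delicate: the claim that the locus $T_m^j$ where $\dim_\C(\Lambda\cap\ol{\Lambda})$ exceeds $\dim\mathrm{span}(S)$ by $j\geq 1$ has real codimension at least $2j$ (you in fact only need $\geq j+1$) in $T_m$. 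Your justification --- that the extra incidence ``is a closed complex condition on the pair $(\Lambda,\ol{\Lambda})$ and hence imposes real codimension $2$ per unit'' --- does not stand: the assignment $D\mapsto\ol{\Lambda}$ involves complex conjugation, which is not holomorphic, so the condition is only a \emph{real}-algebraic closed condition on $T_m$, and such conditions can have real codimension $1$. Moreover, even after complexifying (replacing $\ol{D}$ by an independent divisor), the condition that $\mathrm{span}(D\cup\ol{D})$ drops dimension by $j$ is a statement about \emph{special secant configurations} of the curve, and for an arbitrary irreducible nondegenerate (possibly singular, possibly non-general) curve there is no automatic ``expected codimension'' for such loci; this is precisely the kind of refined secant-plane dimension estimate that would itself require proof.

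The paper's proof is arranged to avoid ever needing such a bound. It sets $m=\dim(\Lambda\cap\ol{\Lambda})$, notes that the span $U$ of $\Lambda\cup\ol{\Lambda}$ has dimension $2k-m$ and is spanned by $N=2k-m+1$ points of $X$ while meeting $X$ in more than $N$ points; by the General Position Lemma the family of such \emph{special} spans of $N$-tuples has dimension at most $N-1=2k-m$ (only a codimension-one statement is used), and within each such $U$ there are only finitely many candidates for the $m$-plane $\Lambda\cap\ol{\Lambda}$, so the bad locus has dimension at most $(2k-m)+m=2k<2k+1$, uniformly in the amount of degeneracy. If you want to rescue your stratified count, you would either have to prove the codimension bound $\geq j+1$ for the degenerate strata (nontrivial for general curves), or reorganize the count as the paper does, parametrizing the span $U$ by spanning $(\dim U+1)$-tuples rather than by the full divisor $D$.
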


\begin{proof}
Let $p\in\Sec_k(X)(\R)$ be a general real point that does not lie on $\Sec_{k-1}(X)$ and not on a $k$-secant $\Lambda$ of $X$ with $\Lambda\cap X = \ol{\Lambda}\cap X$. By Lemma~\ref{lem:generalsecant}, the point $p$ lies on a $k$-secant $\Lambda$ of $X$ with $|\Lambda\cap X| = k+1$. Let $j$ be the cardinality of $(\Lambda\cap X)\cap (\ol{\Lambda}\cap X)$.

Suppose that $j>0$ and let $m$ be the dimension of $\Lambda\cap \ol{\Lambda}$. The intersection of $\Lambda \cap \ol{\Lambda}$ with $X$ consists of $j$ points whose span does not contain $p$. Thus $1\leq j\leq m$. This implies that the dimension of the span $U$ of $\Lambda\cup \ol{\Lambda}$ is at most $2k-m\leq 2k-j$. On the other hand, $U$ contains at least $2k-j+2$ points of $X$ which span $U$. This means that $U$ is spanned by $N=2k-m+1$ points on $X$ but intersects $X$ more than $N$ points. However, by the General Position Lemma \cite[Chapter III, \S1]{ACGHI} and since $N<n$, a subspace that is spanned by $N$ general points on $X$ intersects $X$ only in these points. Therefore, our subspace $U$ is special among those subspaces spanned by $N$ points on $X$ and thus the dimension of all such $U$ is at most $N-1=2k-m$.

The original point $p$ is in special position in $U$: it lies on the intersection of $\Lambda\cap \ol{\Lambda}$, i.e.~an $m$ dimensional space that arises as the intersection of two $k$-dimensional subspaces of $U$ that are each spanned by $k+1$ points in $U\cap X$. Since $|U\cap X|$ is finite, there are only finitely many such $k$-planes in $U$. The set of points that lie on such an $m$-plane therefore has dimension at most $(2k-m)+m=2k<\dim \Sec_k(X)$. Thus a general point will not lie on such an $m$-plane.
\end{proof}

\begin{thm}\label{thm:secantchara}
Let $X\subset \P^n$ be an irreducible nondegenerate real curve. Let $\Sec_k(X)$ be its $k$-th secant variety and suppose that $\Sec_k(X)\neq \P^n$. Let $E\subset \P^n$ be a real linear subspace of dimension $\codim(\Sec_k(X)) - 1$ such that $E\cap \Sec_k(X) = \emptyset$. The following are equivalent:
\begin{enumerate}
    \item The linear system on $X$ defined by all hyperplanes $H\subset \P^n$ containing $E$ is {\placeholderone}.
    \item $\Sec_k(X)$ is hyperbolic with respect to $E$.
\end{enumerate}
\end{thm}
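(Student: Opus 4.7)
The plan is to set up a dictionary between the real fibers of $\pi_E|_{\Sec_k(X)}$ and the hyperplane sections $H\cap X$ with $H\supset E$. Since $\codim \Sec_k(X) = n - 2k - 1$, $E$ has dimension $n - 2k - 2$, hyperplanes through $E$ form a linear system of dimension $2k + 1$, and the linear projection $\pi_E \colon \P^n \dashrightarrow \P^{2k+1}$ restricts to a dominant, generically finite morphism on $\Sec_k(X)$. For $q \in \P^{2k+1}$, I write $L_q$ for the $(n - 2k - 1)$-plane through $E$ lying above $q$; then hyperbolicity with respect to $E$ says exactly that $L_q \cap \Sec_k(X) \subset \Sec_k(X)(\R)$ for every $q \in \P^{2k+1}(\R)$, whereas a divisor of the linear system is $H \cap X$ for a real hyperplane $H \supset E$. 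The common technical ingredient for both directions is that if $p \in \P^n$ is non-real with $\pi_E(p) = \pi_E(\ol{p})$, then $\pi_E$ collapses the real line $\ell = \langle p, \ol{p}\rangle$ to a single point, so $\ell \cap E \neq \emptyset$ (a linear map $\P^1 \to \P^{2k+1}$ is either an embedding or constant).

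For $(2) \Rightarrow (1)$ I argue the contrapositive. Suppose some real hyperplane $H \supset E$ has at least $2k + 2$ non-real points in $\supp(H \cap X)$. After perturbing within the semi-algebraic locus of such hyperplanes, the General Position Lemma lets me further assume that these points are in general linear position. Picking $k+1$ conjugate pairs $z_i, \ol{z}_i$ among them, the $k$-planes $\Lambda = \langle z_1, \ldots, z_{k+1}\rangle$ and $\ol{\Lambda}$ span $U = \langle \Lambda, \ol{\Lambda}\rangle$ of dimension $2k+1$ inside $H$. Since $U, E \subset H$ and $\dim U + \dim E + 1 = n > n - 1 = \dim H$, the subspaces $U$ and $E$ cannot be disjoint, so $\pi_E(U)$ is a hyperplane of dimension $2k$ in $\P^{2k+1}$; inside it $\pi_E(\Lambda)$ and $\pi_E(\ol{\Lambda})$ are $k$-planes (using $\Lambda \cap E = \emptyset$ since $\Lambda \subset \Sec_k(X)$) meeting in a nonempty conjugation-invariant linear subspace, which contains a real point $r$. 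The unique preimage $p \in \Lambda$ of $r$ under the injective map $\pi_E|_\Lambda$ is non-real since $\Lambda \cap \ol{\Lambda} = \emptyset$, while $\pi_E(p) = r$ is real; this contradicts hyperbolicity.

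For $(1) \Rightarrow (2)$ I again argue the contrapositive. Suppose some real $q$ has a non-real point $p \in L_q \cap \Sec_k(X)$; by local constancy of the number of real preimages on the complement of the discriminant of $\pi_E|_{\Sec_k(X)}$, I may take $p$ generic in $\Sec_k(X)$. \Cref{lem:realgeneral} (whose proof uses only that the point is generic, not that it is real) places $p$ on a $k$-secant $\Lambda$ with $|\Lambda \cap X| = k+1$ such that either $\Lambda = \ol{\Lambda}$ or $(\Lambda \cap X)\cap (\ol{\Lambda}\cap X) = \emptyset$. The first case is impossible by the key observation: the line $\ell = \langle p, \ol{p}\rangle \subset \Lambda \subset \Sec_k(X)$ would have to meet $E$, contradicting $E \cap \Sec_k(X) = \emptyset$. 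So the second case holds, and since any real point of $\Lambda \cap X$ would also lie in $\ol{\Lambda}\cap X$, both $\Lambda \cap X$ and $\ol{\Lambda}\cap X$ must consist of $k+1$ non-real points. Applying the key observation to $\ell \subset \langle \Lambda, \ol{\Lambda}\rangle$ yields $E \cap \langle \Lambda, \ol{\Lambda}\rangle \neq \emptyset$, hence $\dim \langle E, \Lambda, \ol{\Lambda}\rangle \leq n - 1$ and this real subspace extends to a real hyperplane $H \supset E$ with $H \cap X \supset (\Lambda \cup \ol{\Lambda}) \cap X$ containing at least $2k+2$ non-real points, violating the {\placeholderone} assumption on the linear system.

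I expect the main obstacle to be the careful handling of genericity in both directions: verifying that \Cref{lem:realgeneral} extends to generic non-real points of $\Sec_k(X)$ (clear from its proof) and arranging via the General Position Lemma in the $(2)\Rightarrow(1)$ direction that the $2k+2$ non-real points can be chosen to span a full $(2k+1)$-plane, which is handled by replacing $H$ with a generic hyperplane in the semi-algebraic locus where $(1)$ fails.
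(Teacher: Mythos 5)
Your direction $(2)\Rightarrow(1)$ has a genuine gap at the general position step, and it occurs at exactly the crux of the argument. You keep $E$ fixed and perturb $H$ only inside the family of hyperplanes containing $E$, then invoke the General Position Lemma to assume the $2k+2$ non-real points of $\supp(H\cap X)$ are projectively independent. But the General Position Lemma of \cite[Chapter III, \S1]{ACGHI} concerns hyperplanes that are generic in all of $(\P^n)^\vee$; the hyperplanes through $E$ form only a $(2k+1)$-dimensional linear subfamily, which could lie entirely inside the exceptional locus of that lemma, so genericity within your semi-algebraic locus gives you nothing. The independence is not a cosmetic convenience: it is equivalent to $\Lambda\cap\ol{\Lambda}=\emptyset$, which is precisely what makes the preimage $p$ of the real intersection point $r$ non-real. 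Note also that you cannot recover it from hyperbolicity by projecting: downstairs the images of the $2k+2$ points lie in the $2k$-plane $\pi_E(H)$, so general position for the projected curve only yields that they span $\pi_E(H)$, which leaves both possibilities $\dim\langle\Lambda,\ol{\Lambda}\rangle=2k$ (no contradiction, $r$ lifts to a real point of $\Lambda\cap\ol{\Lambda}$) and $\dim\langle\Lambda,\ol{\Lambda}\rangle=2k+1$ open. The paper avoids this by perturbing the \emph{pair} $E\subset H$, so that $H$ becomes a genuinely general member of $(\P^n)^\vee$; the price is that $E$ moves too, and one then uses that hyperbolicity with respect to the center, the disjointness $E\cap\Sec_k(X)=\emptyset$, and the presence of at least $2k+2$ non-real points are all stable under small perturbations (for the first point see the hyperbolicity-cone results in \cite{Sha18,kum20}). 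If you want to keep $E$ fixed, you must supply a separate argument that some hyperplane through $E$ in your locus meets $X$ in $2k+2$ independent non-real points; as written, this step would fail.

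Your direction $(1)\Rightarrow(2)$ is essentially the paper's argument run upstairs in $\P^n$ instead of on the projected curve in $\P^{2k+1}$: you correctly observe that the proof of \Cref{lem:realgeneral} only uses genericity and not reality of the point, rule out the case $\Lambda=\ol{\Lambda}$ via $E\cap\Sec_k(X)=\emptyset$, and produce the hyperplane $H\supset E$ with $2k+2$ non-real section points; this is sound. One small caveat: your reduction to a generic $p$ via ``local constancy of the number of real preimages off the discriminant'' silently assumes the initial real point $q$ can be taken off the branch locus of $\pi_E\vert_{\Sec_k(X)}$. This is easily repaired (the set of real points of $\P^{2k+1}$ having a non-real preimage is open, since a finite surjective map onto a smooth target is open, and the images of the bad loci of \Cref{lem:realgeneral}, together with the branch locus, have real dimension at most $2k$), but it should be said; it is the same standard ``check real-fiberedness on generic fibers'' reduction the paper uses elsewhere.
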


\begin{proof}
  $(1)\Rightarrow(2):$
Suppose $\Sec_k(X)$ is not hyperbolic with respect to $E$. Then there exists a real linear subspace $E'\subset\P^n$ of dimension $\dim(E)+1$ such that $E\subset E'$ and $E'\cap \Sec_k(X)$ contains a non-real point. This is an open condition on $E'$.
Let $\pi_E\colon \P^n \ratto \P^{2k+1}$ be the linear projection with center $E$. Then $\pi_E(X)$ is an irreducible and nondegenerate real curve and $\Sec_k(\pi_E(X))$ is (up to closure) equal to $\pi_E(\Sec_k(X))$, which is Zariski dense in $\P^{2k+1}$ by \cite{lange}.

The point $p = \pi_E(E')\in\P^{2k+1}(\R)$ can be chosen to be general, because we can perturb $E'$.
Combining Lemma~\ref{lem:realgeneral} with the fact that $E'\cap \Sec_k(X)$ contains a non-real point, we conclude that $p$ lies on a $k$-secant $\Lambda$ of $\pi_E(X)$ with $\Lambda\cap X \cap \ol{\Lambda} = \emptyset$.

Consider the linear subspace of $\P^{2k+1}$ spanned by $p$, $\Lambda$, and $\ol{\Lambda}$. Its dimension is at most $2k$ because $p$ is contained in $\Lambda \cap \ol{\Lambda}$. Hence there is a hyperplane $H\subset \P^n$ containing $E$ and $\pi_E^{-1}(\Lambda \cup \ol{\Lambda})$. Since $\pi_E\vert_X\colon X\to \P^{2k+1}$ is a real morphism of curves, $H\cap X$ contains at least $2k$ non-real points, namely preimages of the points spanning $\Lambda$ and $\ol{\Lambda}$. 

$(2)\Rightarrow(1):$ Let $\Sec_k(X)$ be hyperbolic with respect to $E$ but assume for the sake of contradiction that there is a real hyperplane $H\subset \P^n$ containing $E$ that intersects $X$ in more than $2k$ non-real points. Since these non-real points come in complex conjugate pairs, there are at least $2k+2$ non-real points. Any sufficiently small perturbation of the pair $E\subset H$ will also have these properties. Thus by the General Position Lemma \cite[Chapter III, \S1]{ACGHI} we can assume that $H$ intersects $X$ in $2k+2$ non-real points $P_0,\ldots,P_k,\overline{P_0},\ldots,\overline{P_k}$ that are projectively independent.
This implies in particular that the $k$-secant $\Lambda$ spanned by $P_0,\ldots,P_k$ does not intersect its complex conjugate $\overline{\Lambda}$. Thus $\Lambda$ does not contain any real point. The images of $\Lambda$ and $\overline{\Lambda}$ under the linear projection from center $E$ will however intersect for dimension reasons as they are both contained in the $2k$-dimensional image of $H$. This intersection point is real and its preimage is a real linear space $E'$ of dimension $\codim(\Sec_k(X))$ that contains $E$ and intersects $\Lambda$ in a necessarily non-real point. This contradicts the assumption that $\Sec_k(X)$ is hyperbolic with respect to $E$.
\end{proof}

\begin{rem}\label{rem:nonm}
  Assume that in \Cref{thm:secantchara} the curve $X$ is smooth and $k>0$. Let us project $X$ to $\pp^{2k+1}$ from $E$. After perturbing $E$ if necessary the image $X'$ is isomorphic to $X$ and has the property, that any real hyperplane intersects $X'$ in at most $2k$ non-real points. Thus if $k=1$, then $X'\subset\pp^3$, and therefore $X$, is an $M$-curve by \cite[Thm.~2]{mikhalkin2019maximally}. We do not know whether there are {\placeholderone} linear systems of dimension $>3$ on curves that are not $M$-curves. At least if we assume that not every tuple of $2k-2$ points on $X$ lies on a $(2k-1)$-secant of dimension $2k-2$, the answer is negative. Indeed, we let $E'$ be the span of $k-1$ generic pairs of complex conjugate points on $X$ and project $X'$ from $E'$ to $\pp^3$. The image $X''$ is then again isomorphic to $X$ and we observe that every real plane in $\pp^3$ intersects $X''$ in at most two non-real points. Then by the same argument as above $X$ is an $M$-curve. 
\end{rem}

\begin{rem}
  There is yet another way to look at \Cref{thm:secantchara}. Let $X\subset\pp^{2k+1}$ be a curve embedded via a {\placeholderone} linear system $\fD$ of dimension $2k+1$. From this we obtain the map $X^{(k+1)}\to\G(k,n)$ from the $(k+1)$st symmetric power of $X$ to the Grassmannian of $k$-planes that sends a tuple of $k+1$ points on $X$ to the $k$-secant they span. Such a map defines a vector bundle $\cF$ of rank $k+1$ on $X^{(k+1)}$. Moreover, there is a subspace $V\subset H^0(X^{(k+1)},\cF)$ of dimension $2k+2$ such that we can identify $\pp(V)$ with $\pp^{2k+1}$ in a way that the zero set of any $s\in V$ corresponds to the $k$-secants that contain the corresponding point in $\pp^{2k+1}$. A generic real point in $\pp^{2k+1}$ lies only on real $k$-secants of $X$. Thus the space of sections $V$ is totally real in the sense that a generic section in $V$ has only real zeros.

As a concrete example let $X\subset\pp^3$ be the projection of the rational normal curve of degree four from a point $p\in\pp^4$ that does not lie on the secant variety $\Sec_1(X)$. The vector bundle $\cF$ on $X^{(2)}=\pp^2$ that we get is just the tangent bundle $\cT_{\pp^2}$. In order to describe the vector space $V$, we realize $\cT_{\pp^2}$ as the cokernel of $$\begin{pmatrix}{p_0x_0+p_1x_1+p_2x_2}\\{p_1x_0+p_2x_1+p_3x_2}\\{p_2x_0+p_3x_1+p_4x_2}\end{pmatrix}.$$Then $V$ is spanned by the columns of the matrix $$\begin{pmatrix}
  x_0&x_1&x_2&0&0\\0&x_0&x_1&x_2&0\\0&0&x_0&x_1&x_2
\end{pmatrix}.$$ If $\Sec_1(X)$ is hyperbolic with respect to $p$, then a generic section $s\in V$ has only real zeros on $\pp^2$.
\end{rem}

We now show that on any $M$-curve {\placeholderone} linear systems of any dimension exist. More precisely, any complete linear system corresponding to a divisor, that is {\placeholdertwo} in the following sense, contains a {\placeholderone} linear system.

\begin{Def}
  Let $X$ be an $M$-curve with $S_0,\ldots,S_g$ being the connected components of $X(\R)$. A divisor $D$ on $X$ is \emph{{\placeholdertwo}} if it has odd degree on $S_1,\ldots,S_g$.
\end{Def}

\begin{lem}\label{lem:tollnonspecial}
  If $D$ is {\placeholdertwo}, then $D$ is nonspecial, i.e.  $\ell(K-D)=0$.
\end{lem}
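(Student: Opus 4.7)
The proof I propose is a parity-and-degree count. The crucial input, essentially \cite[Lemma~4.1]{grossharris} as invoked in Example~\ref{ex:mcurvemap}, is that for any real divisor $A$ on $X$ and any oval $S_i$ of $X(\R)$, the integer $\deg(A|_{S_i})\bmod 2$ is an invariant of the real linear equivalence class of $A$, and in particular $\deg(K|_{S_i})$ is even for every $i$. The underlying reason: a real meromorphic function or differential restricted to $S_i\cong S^1$ has odd-order zeros and poles precisely at its sign changes, and on a circle these must form a set of even cardinality. I would recall (or give a half-line proof of) this before starting the main argument.

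Given this, the lemma follows by a short contradiction argument. Suppose $\ell(K-D)\geq 1$ and choose a real effective divisor $E$ with $E\sim K-D$. For $i=1,\ldots,g$,
\[
  \deg(E|_{S_i}) \equiv \deg(K|_{S_i}) - \deg(D|_{S_i}) \equiv 0 - 1 \equiv 1 \pmod 2,
\]
so each $\deg(E|_{S_i})$ is a positive odd integer, and therefore $\deg E\geq g$. Under the (implicit) assumption that $D$ is itself effective---the natural setting where the complete linear system $|D|$ is non-empty and in which the lemma is actually applied---the same parity-plus-effectivity count applied to $D$ yields $\deg(D|_{S_i})\geq 1$ for $i\geq 1$ and hence $\deg D\geq g$. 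Then
\[
  g \leq \deg E = (2g-2) - \deg D \leq g-2,
\]
a contradiction.

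The only step that requires any genuine thought is the parity statement for the canonical class restricted to each oval; once that is accepted, the argument is a one-line degree inequality. I therefore do not foresee any real obstacle beyond cleanly stating (or citing) that parity fact.
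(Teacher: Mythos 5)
Your argument is sound, but it takes a genuinely different route from the paper: the paper proves \Cref{lem:tollnonspecial} by a one-line appeal to \cite[Thm.~2.5]{HuismanMR1969741}, whereas you give a self-contained parity-and-degree count, which is essentially the standard argument behind Huisman's theorem. Both parity inputs you use are fine: invariance of $\deg(A|_{S_i})\bmod 2$ under real linear equivalence is the Gross--Harris fact \cite[Lemma~4.1]{grossharris} already invoked in \Cref{ex:mcurvemap}, and evenness of $\deg(K|_{S_i})$ follows from your sign-change argument (or from the observation that the real points of the canonical bundle restrict on each oval to the cotangent bundle of $S^1$, which is trivial). Your first display then already yields the sharp statement: a maximally odd divisor with $\ell(K-D)>0$ has $\deg D=2g-2-\deg E\le g-2$.

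The one point to fix is how you close the argument. You are right that some extra hypothesis is implicitly needed --- for instance $D=-(Q_1+\cdots+Q_g)$ with $Q_i\in S_i$ is maximally odd but special --- but effectivity of $D$ (or nonemptiness of $|D|$) is not the hypothesis under which the lemma is actually used. In \Cref{thm:tollimpliesveryreal} it is applied to $D-\sum_j P_j^{(i)}$, and in \Cref{prop:Dveryample} to $D-(P_1+\cdots+P_m)$ and to $D-P-Q$; these divisors are not effective, and in some of these applications (e.g.\ $m=n+1$ in \Cref{prop:Dveryample}) the linear system is empty --- indeed its emptiness is the desired conclusion --- so passing to an effective representative of $D$ is not available. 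The correct implicit hypothesis is simply $\deg D\ge g-1$, which is exactly Huisman's condition $\deg D+g\ge 2g-1$ and is what the paper verifies each time it invokes the lemma. Since your parity count on $E$ proves precisely that a special maximally odd divisor has degree at most $g-2$, you should drop the effectivity assumption and state the degree bound instead; with that change your proof covers all the paper's uses and is complete.
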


\begin{proof}
  This is a direct consequence of \cite[Thm. 2.5]{HuismanMR1969741}.
\end{proof}

\begin{lem}\label{rem:verycompact}
  Let $D$ be very ample, {\placeholdertwo} and assume that it has even degree on $S_0$. Then $S_0$ is very compact. 
\end{lem}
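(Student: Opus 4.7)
The assertion is equivalent to producing a real global section $s\in H^0(X,\sL(D))$ whose divisor of zeros is disjoint from $S_0$. Indeed, via the embedding $\phi_D\colon X\hookrightarrow\P^n$ given by the very ample divisor $D$ (with $n+1=\ell(D)=\deg(D)-g+1$, the equality using the nonspeciality of $D$ from \Cref{lem:tollnonspecial}), real sections of $\sL(D)$ correspond (up to scalar) to real hyperplanes of $\P^n$, and the hyperplane section is $\phi_D(\mathrm{div}(s))$.

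First I would record the parity constraint of Gross--Harris: every $D'\in|D|_\R$ satisfies $\deg(D'|_{S_0})\equiv\deg(D|_{S_0})\equiv 0\pmod 2$ and $\deg(D'|_{S_i})$ is odd (hence at least $1$) for $i\geq 1$. So the target $\deg(D'|_{S_0})=0$ is not ruled out by parity, and any such $D'$ automatically places at least $g$ of its real zeros on $S_1\cup\cdots\cup S_g$.

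To exhibit such a $D'$, I would fix a real point $Q_i\in S_i$ for each $i=1,\ldots,g$ and look for effective real divisors of the form $D'=Q_1+\cdots+Q_g+E$ with $E\in|D-Q_1-\cdots-Q_g|$. The shifted class $D_1=D-Q_1-\cdots-Q_g$ has even degree on every real component, so there is the option of representing $D_1$ by an $E$ whose support consists entirely of complex conjugate pairs; then $D'$ has no zero on $S_0$. Existence of such an $E$ is a real Abel--Jacobi / Jacobi inversion statement: the map from the $(n/2)$-fold symmetric product of the non-real part of $X$ into the component of $\pic^{n}(X)(\R)$ containing $[D_1]$ has real source dimension $n$ and target dimension $g$, so a dimension count yields surjectivity once $\deg(D)$ is sufficiently large compared to $g$.

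The main obstacle is to handle the remaining range of very ample but low-degree $D$, where the above dimension count is too tight. Here I would instead argue by a pencil deformation inside $|D|_\R$: starting from an arbitrary $D_0\in|D|_\R$ with $m=\#(\mathrm{supp}(D_0)\cap S_0)>0$, choose two real zeros $P_1,P_2\in\mathrm{supp}(D_0)\cap S_0$ and use the very ampleness of $D$ to produce a real section $s_1$ whose zero divisor avoids $P_1$ and $P_2$; the pencil spanned by $s_1$ and the section defining $D_0$ then moves $P_1,P_2$ continuously, and at a suitable parameter value these two real zeros collide and leave the real locus as a complex conjugate pair, decreasing $m$ by two. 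Iterating this reduction brings $m$ down to zero and produces the desired hyperplane. The subtle point is to confirm that very ampleness always suffices to make the pair-collision step unobstructed; this is where the delicate work lies.
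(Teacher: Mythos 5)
There is a genuine gap: both of the two existence steps your argument rests on are asserted rather than proved. For the ``large degree'' regime you claim that the map from conjugate-pair divisors $\sum_{j}(P_j+\bar P_j)$ to the real Picard group is surjective onto the relevant locus ``by a dimension count''; a dimension count never gives surjectivity, and here it does not even address the real issue: for an $M$-curve $\pic^{0}(X)(\R)$ has $2^{g}$ connected components, so you must show that the class $[D_1]=[D-Q_1-\cdots-Q_g]$ lies in the \emph{image component} of the totally-non-real Abel--Jacobi map, not merely compare dimensions (also the image of that map is not obviously closed, since conjugate pairs can degenerate to $2P$ with $P$ real). Representing a prescribed real divisor class by a totally non-real effective divisor is a nontrivial real Riemann--Roch/Jacobi-inversion statement, much stronger than what the lemma needs. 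For the remaining range you offer a pencil-deformation heuristic and explicitly concede that the key collision step is unverified; as stated it is also not expected to work: in a real pencil the number of zeros on $S_0$ only has constant parity, the two chosen real zeros need not collide and leave the real locus (they can circulate on $S_0$, or touch and separate again as real points), and non-real zeros elsewhere can simultaneously become real on $S_0$, so there is no monotone decrease of $m$. Since the lemma carries no largeness hypothesis beyond very ampleness, the unproved low-degree case is not a corner case but part of the statement.

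The paper's proof inverts your logic and avoids the inversion problem entirely. Write $\deg(D)=2k+g$; by nonspeciality (\Cref{lem:tollnonspecial}) one has $\ell(D)=2k+1$, so the curve sits in $\P^{2k}$. Now \emph{prescribe} $k$ pairs of complex conjugate non-real points on the curve: these $2k$ points span a conjugation-invariant subspace of dimension at most $2k-1$, hence lie on a real hyperplane $H$ --- pure linear algebra, no Abel--Jacobi input. The divisor $H.C$ has degree $2k+g$ and already contains the $2k$ non-real points; since it is linearly equivalent to $D$, the parity invariance you quoted forces odd (hence $\geq 1$) degree on each of $S_1,\ldots,S_g$, which consumes the remaining degree $g$ exactly. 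Hence $H\cap S_0=\emptyset$ and $S_0$ is very compact. In short: instead of trying to realize a residual class by conjugate pairs (hard), impose the conjugate pairs as incidence conditions on the hyperplane and let the parity constraint absorb the rest. If you want to salvage your route, this is the substitute for both of your missing steps.
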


\begin{proof}  
By assumption we have $\deg(D) = 2k + g$. Then there is a hyperplane $H\subset \P^n$ such that the support of $H.C$ contains $k$ pairs of complex conjugate non-real points. Since the support of $H.C$ on the connected components $S_i$ ($i>0$) has at least $1$ point, $H$ does not intersect $S_0$.
\end{proof}

\begin{thm}\label{thm:tollimpliesveryreal}
  Let $X$ be an $M$-curve of genus $g$. For any {\placeholdertwo} divisor $D$ of degree $d\geq 2k+g+1$ there exists a {\placeholderone} linear system $\fD\subset|D|$ of dimension $2k+1$.
\end{thm}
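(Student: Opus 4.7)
The plan is to reduce the statement to the case $d = 2k+g+1$ by imposing base points on $S_0$. Set $\delta = d - (2k+g+1) \geq 0$ and pick distinct real points $P_1, \ldots, P_\delta \in S_0$; let $D' = D - P_1 - \cdots - P_\delta$. The sub-linear system $\fD \subset |D|$ I would take is the one obtained by declaring the $P_i$ to be base points, i.e.\ the projectivization of the subspace of $H^0(X, \sL(D))$ consisting of sections vanishing at $P_1, \ldots, P_\delta$. This subspace is canonically identified with $H^0(X, \sL(D'))$, and a typical divisor in $\fD$ has the form $E + P_1 + \cdots + P_\delta$ with $E \in |D'|$ effective.

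Since the $P_i$'s lie on $S_0$, passing from $D$ to $D'$ does not affect the degree on any $S_i$ for $i \geq 1$, so $D'$ is still {\placeholdertwo}. By \Cref{lem:tollnonspecial}, $D'$ is nonspecial; hence $\ell(D') = \deg(D') - g + 1 = 2k + 2$ and $\dim \fD = 2k+1$, as required. It then remains to verify that $\fD$ is {\placeholderone}, i.e.\ that every divisor in it has at most $2k$ non-real points in its support.

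The core of the argument is the parity invariance used already in \Cref{ex:mcurvemap}: for any effective divisor $E$ linearly equivalent to $D'$, the degree of $E$ on each connected component of $X(\R)$ has the same parity as the corresponding degree of $D'$. For $i \geq 1$ this parity is odd since $D'$ is {\placeholdertwo}, so $E$ contains at least one real point on each $S_i$. On $S_0$, the parity of the degree of $D'$ is $\deg(D') - g \equiv 2k+1 \equiv 1 \pmod 2$, so $E$ also has at least one real point on $S_0$. This gives at least $g+1$ real points in $\supp(E)$, leaving at most $\deg(D') - (g+1) = 2k$ non-real ones; appending the real points $P_1, \ldots, P_\delta$ does not change this count. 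The only real content is therefore the parity computation on $S_0$, and it is precisely this that forces the numerical hypothesis $d \geq 2k+g+1$ and makes the base-point reduction work. I do not expect a serious obstacle: both the dimension count and the parity arithmetic are very direct once the reduction has been set up.
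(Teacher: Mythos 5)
Your proof is correct, but it takes a genuinely different route from the paper's. The paper fixes $n=d-k-g$ alternating points on $S_0$, forms $V=V_0+V_1$ from the two spaces of sections vanishing on the two interlacing configurations, and bounds the number of non-real zeros of $s_0+s_1$ by a sign-change (intermediate value theorem) argument in the spirit of \Cref{ex:mcurvemap}. You instead impose $\delta=d-(2k+g+1)$ real base points on $S_0$ and observe that the resulting complete system $|D'|$ is automatically {\placeholderone}: since $D'$ is {\placeholdertwo} and, modulo $2$, $\deg_{S_0}(D')\equiv\deg(D')-g=2k+1$, every (conjugation-invariant) member of $|D'|$ has odd real degree on each of the $g+1$ ovals by the parity invariance already invoked in \Cref{ex:mcurvemap}, hence carries real points on all $g+1$ distinct ovals, so its non-real part has degree at most $2k$ and its support contains at most $2k$ non-real points; \Cref{lem:tollnonspecial} supplies the dimension count. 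This is shorter, makes the role of the $M$-curve hypothesis transparent (one needs $g+1$ ovals to absorb degree $g+1$), and in fact shows the cleaner statement that for $d=2k+g+1$ the whole complete system $|D|$ is {\placeholderone}. What the interlacing construction buys is relevant only when $d>2k+g+1$: your $\fD$ has $P_1,\dots,P_\delta$ as base points, so under the embedding by $|D|$ the associated center is the span of these points and meets $X\subset\sigma_k(X)$; with that center the hypothesis of \Cref{thm:secantchara} fails, whereas the paper's construction avoids forcing base points and keeps the existence proof tied to interlacing sections, which is what the downstream applications (e.g. \Cref{cor:tollhyp}, \Cref{rem:degreepartitions}) lean on. For \Cref{thm:tollimpliesveryreal} exactly as stated, your argument is complete.
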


\begin{proof}
 Let $n=d-k-g$ and choose points $$P_1^{(0)},P_2^{(0)},\ldots,P_n^{(0)}\textrm{ and }P_1^{(1)},P_2^{(1)},\ldots,P_n^{(1)}$$ on $S_0$ as in \Cref{ex:mcurvemap}, i.e. they alternate along the circle $S_0 $ meaning that the order in clockwise (or counterclockwise) direction is $P_1^{(0)},P_1^{(1)},P_2^{(0)},P_2^{(1)},\ldots$. We let $V_i\subset\Gamma(X,\sL(D))$ be the space of all sections that vanish on $P_1^{(i)},P_2^{(i)},\ldots,P_n^{(i)}$ for $i=0,1$. Since $$D-\sum_{j=1}^nP_j^{(i)}$$ is {\placeholdertwo}, the dimension of $V_i$ is $k+1$ by Lemma \ref{lem:tollnonspecial}. For the same reason we have $V_0\cap V_1=\{0\}$. Thus $V=V_0+V_1$ has dimension $2k+2$ and we are going to prove that the linear system corresponding to $V$ is {\placeholderone}. Let $s_i\in V_i$ and $Z_i\subset X$ its zero set. We need to show that $s=s_0+s_1$ has at most $2k$ non-real zeros. By construction there are $n$ connected components of $S_0\setminus Z_0$ that contain one of the $P_j^{(1)}$. Among the zeros of $s_1$ are the points $P_j^{(1)}$ and, since $D$ is {\placeholdertwo}, $s_1$ has at least one zero on each $S_k$ for $k\geq 1$. Thus there are only $d-n-g=k$ additional zeros of $s_1$ that might lie on $S_0$. We conclude that there are at least $n-k$ connected components of $S_0\setminus Z_0$ that contain an odd number of zeros of $s_1$. Thus $s$ has at least $n-k$ zeros on $Z_0$ by the intermediate value theorem. In addition to that there is at least one zero of $s$ on each of the $S_k$ for $k\geq 1$. Therefore, there are in total at least $n-k+g=d-2k$ real zeros of $s$.
\end{proof}

\begin{rem}\label{rem:degreepartitions}
  In the case $k=0$ resp. $k=1$ it was shown in \cite{kummersep} resp. \cite{mikhalkin2019maximally} that for any tuple $(d_0,\ldots,d_g)$ there is an $M$-curve $X$ and a {\placeholderone} linear system $\fD$ of dimension $2k+1$ and degree $2k+\sum_{j=0}^gd_0$ such that any $E\in\fD$ has at least $d_i$ zeros on $S_i$ for each $i$. It would be very interesting to determine whether the corresponding statement remains true in the case $k>1$. The tuples realized by our construction are all of the type $(d_0,1,\ldots,1)$ for some $d_0\geq1$. In particular, for $k\leq1$ not every {\placeholderone} linear system arises from our construction. Another compelling question is whether the rigid isotopy results from \cite{mikhalkin2019rigid} generalize to the cases $k>1$.
\end{rem}

\begin{cor}\label{cor:tollhyp}
  Let $X\subset \pp^n$ be an $M$-curve embedded via the complete linear system of a {\placeholdertwo} divisor and $n\geq 2k+1$. The secant variety $\sigma_k(X)$ is hyperbolic.
\end{cor}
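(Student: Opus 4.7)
The plan is to combine \Cref{thm:tollimpliesveryreal} with the implication $(1) \Rightarrow (2)$ of \Cref{thm:secantchara}. First, I verify the degree bound: since $D$ is \placeholdertwo{}, \Cref{lem:tollnonspecial} gives that $D$ is nonspecial, so Riemann--Roch yields $\ell(D) = \deg(D) - g + 1 = n+1$, whence $\deg(D) = n + g \geq 2k + g + 1$. Thus \Cref{thm:tollimpliesveryreal} furnishes a \placeholderone{} linear subsystem $\fD \subset |D|$ of projective dimension $2k+1$. The case $n = 2k+1$ is trivial since then $\sigma_k(X) = \P^n$, so from now on I assume $n \geq 2k+2$.

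Next I translate $\fD$ into a linear subspace of $\P^n$. Let $V \subset H^0(X, \sL(D))$ be the $(2k+2)$-dimensional subspace corresponding to $\fD$, and set $E = \bigcap_{s \in V}\cV(s) \subset \P^n$. Then $E$ is a real linear subspace of dimension $n - 2k - 2 = \codim(\sigma_k(X)) - 1$, and by construction the hyperplanes of $\P^n$ containing $E$ cut out exactly the linear system $\fD$ on $X$.

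To invoke \Cref{thm:secantchara}, the remaining point is to arrange $E \cap \sigma_k(X) = \emptyset$. Since $\dim \sigma_k(X) = 2k+1$ and $\dim E = n - 2k - 2$, the expected intersection dimension is $-1$, so the locus of $E' \in \Gr(n-2k-2, n)$ with $E' \cap \sigma_k(X) \neq \emptyset$ is a proper closed subvariety of the Grassmannian. The construction in \Cref{thm:tollimpliesveryreal} enjoys substantial freedom in the choice of the $2n$ interlacing points $P_j^{(i)}$ on $S_0$, and the counting argument given there is robust under small perturbations preserving the alternation order. Because the induced subspace $E$ depends nontrivially on this configuration, a generic admissible choice of interlacing points lies outside the bad locus, securing $E \cap \sigma_k(X) = \emptyset$. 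Applying \Cref{thm:secantchara} then yields that $\sigma_k(X)$ is hyperbolic with respect to $E$, hence hyperbolic.

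The main obstacle is the perturbation step: one must check that the \placeholderone{} property survives small deformations of the interlacing $(P_j^{(i)})$-configuration, and that the induced map from the real parameter space of such configurations to $\Gr(n-2k-2, n)$ has image not entirely contained in the codimension-$\geq 1$ bad locus $\{E' : E' \cap \sigma_k(X) \neq \emptyset\}$. Both should follow from the concrete nature of the construction behind \Cref{thm:tollimpliesveryreal}, but the argument deserves careful verification.
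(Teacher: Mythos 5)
Your overall route is exactly the paper's: the printed proof of \Cref{cor:tollhyp} is nothing more than the combination of \Cref{thm:secantchara} and \Cref{thm:tollimpliesveryreal}, and your bookkeeping is correct (nonspecialty via \Cref{lem:tollnonspecial} gives $\deg D=n+g\geq 2k+g+1$; the case $n=2k+1$, where $\sigma_k(X)=\P^n$ by Lange's theorem, is indeed degenerate; the common zero locus $E$ of $V$ has dimension $n-2k-2$ and the hyperplanes through $E$ cut out exactly $\fD$ because the embedding is by the complete system). The one place where you go beyond the paper is the hypothesis $E\cap\sigma_k(X)=\emptyset$ of \Cref{thm:secantchara}, which the published proof discharges silently; you are right that it must be addressed, but your argument for it is not a proof. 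Half of your flagged obstacle is vacuous: there is no need for the {\placeholderone} property to ``survive deformations,'' since \Cref{thm:tollimpliesveryreal} applies verbatim to every alternating configuration of the points $P_j^{(i)}$. The genuine issue is the other half: the bad locus $\{E'\in\Gr(n-2k-2,n):E'\cap\sigma_k(X)\neq\emptyset\}$ is a hypersurface, so a \emph{generic} subspace avoids $\sigma_k(X)$, but the subspaces $E$ produced by the construction form a constrained real family of small dimension inside the Grassmannian, and ``$E$ depends nontrivially on the configuration'' does not exclude that this entire family lies inside the bad hypersurface. Nothing in your sketch rules this out.

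To close the gap one would have to either verify disjointness for the constructed $V$ directly --- for instance it suffices to check that for every effective divisor $A$ of degree $k+1$ the restriction map $V\to H^0(\sO_A(D))$ is surjective, equivalently $\dim\bigl(V\cap H^0(X,\sL(D-A))\bigr)=k+1$, since $\sigma_k(X)$ is the union of the spans of such $A$ --- or show that the {\placeholderone} property of the system of hyperplanes through $E$ by itself forces $E\cap\sigma_k(X)=\emptyset$. The latter is not automatic either: if $p\in E\cap\sigma_k(X)$ lies in the span of $A$, then the hyperplanes containing $E$ and the span of $A$ form a subsystem of $\fD$ of dimension at least $k+1$, and intersecting with the analogous subsystem for $\ol{A}$ yields members of $\fD$ whose supports contain those of $A$ and $\ol{A}$; this contradicts the definition of {\placeholderone} only when $A$ consists of $k+1$ distinct non-real points disjoint from its conjugate, so secants through real or conjugation-invariant points still need a separate argument. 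Until one of these routes is carried out, your proof has a gap at exactly the step you flagged --- a step on which the paper's own one-line proof is silent.
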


\begin{proof}
  This is just the combination of Theorems \ref{thm:secantchara} and \ref{thm:tollimpliesveryreal}.
\end{proof}

\begin{ex}
  By definition any divisor on $\pp^1$ is {\placeholdertwo}. Therefore, any secant variety $\sigma_k(C)$ of a rational normal curve $C\in\pp^n$ is hyperbolic.
\end{ex}

\begin{ex}\label{ex:elliptic1}
  First note that any divisor of odd degree on an elliptic $M$-curve is {\placeholdertwo}. Thus it is very easy to construct hyperbolic secant varieties of elliptic curves. Here we give an explicit example.
  Let $C\subset \P^2$ be the smooth cubic curve defined by the equation $x_0^3-x_0x_2^2-x_1^2x_2$. We embed $C$ to $\P^4$ by the map
  \[
    \iota\colon \left\{
    \begin{array}{l}
      C \ratto \P^4 \\
      (x_0:x_1:x_2) \mapsto (x_0^2:x_0x_1:x_0x_2:x_1x_2:x_2^2)
    \end{array}\right.
  \]
  given by the linear system of quadrics on $\P^2$ vanishing at $(0:1:0)$, which are global sections of $\sL(5P)$ with $P = (0:1:0)$. The equation of its secant variety in coordinates $(z_0:z_1:z_2:z_3:z_4)$ on $\P^4$ is 
  \[
    \begin{array}{cl}
    f =  z_0^3z_2^2+z_1^2z_2^3-z_0z_2^4-4z_0z_1z_2^2z_3+z_0^2z_2z_3^2+z_2^3z_3^2+2z_1z_2z_3^3-z_0z_3^4-z_0^4z_4+\\
     z_2^4z_4+2z_0^2z_1z_3z_4+2z_1z_2^2z_3z_4-z_1^2z_3^2z_4-2z_0z_2z_3^2z_4+z_0^3z_4^2-z_1^2z_2z_4^2-z_0z_2^2z_4^2.
    \end{array}
  \] This is a hyperbolic polynomial by \Cref{cor:tollhyp}. A direction of hyperbolicity is given by $(2:0:-3:0:6)$. This hypersurface is singular exactly at the curve $C$.
\end{ex}

\begin{ex}
  We want to construct a similar example in genus two. Consider the hyperelliptic $M$-curve $X$ defined by $$y^2=(1-x^2)\cdot(4-x^2)\cdot(9-x^2).$$ The following points on $X$ define a {\placeholdertwo} divisor $D$ of degree six: $$(\pm 2,0),(\pm 1,0),(0,\pm 6).$$The corresponding embedding to $\pp^4$ is given by
  $$X\to\pp^4,\,(x,y)\mapsto(y:x^2-9:x^3-9x:xy:x^4-81). $$The equation of its secant variety in coordinates $(z_0:z_1:z_2:z_3:z_4)$ on $\P^4$ is
  \[\tiny{
  \begin{array}{cl}
    f=&13122 z_0^6 z_1^2+129762 z_0^4 z_1^4-661122 z_0^2 z_1^6-
    6936930 z_1^8+14418 z_0^4 z_1^2 z_2^2-451116 z_0^2 z_1^4 z_2^2+
    560690 z_1^6 z_2^2 \\ & -8162 z_0^2 z_1^2 z_2^4+137410 z_1^4 z_2^4-1170 
    z_1^2 z_2^6+2916 z_0^5 z_1 z_2 z_3+134136 z_0^3 z_1^3 z_2 z_3-
    582156 z_0 z_1^5 z_2 z_3 \\ & +5544 z_0^3 z_1 z_2^3 z_3-129368 z_0 z_1
    ^3 z_2^3 z_3+324 z_0 z_1 z_2^5 z_3-4374 z_0^4 z_1^2 z_3^2+34020 
    z_0^2 z_1^4 z_3^2-286902 z_1^6 z_3^2 \\ & +23940 z_0^2 z_1^2 z_2^2 z_3^
    2-29956 z_1^4 z_2^2 z_3^2+160 z_0^2 z_2^4 z_3^2-3542 z_1^2 z_2^4 
    z_3^2-648 z_0^3 z_1 z_2 z_3^3-3816 z_0 z_1^3 z_2 z_3^3 \\ & +616 z_0 
    z_1 z_2^3 z_3^3+486 z_0^2 z_1^2 z_3^4-3942 z_1^4 z_3^4-438 z_1^2 
    z_2^2 z_3^4+36 z_0 z_1 z_2 z_3^5-18 z_1^2 z_3^6-2187 z_0^6 z_1 
    z_4 \\ & -62613 z_0^4 z_1^3 z_4+237015 z_0^2 z_1^5 z_4+5241313 z_1^7 z_4-3015 z_0^4 z_1 z_2^2 z_4+164006 z_0^2 z_1^3 z_2^2 z_4 \\ & -134819 z_1^5 z_2^2 z_4+1459 z_0^2 z_1 z_2^4 z_4-50701 z_1^3 z_2^4 z_4+207 
    z_1 z_2^6 z_4-162 z_0^5 z_2 z_3 z_4 \\ & -38700 z_0^3 z_1^2 z_2 z_3 
    z_4+201750 z_0 z_1^4 z_2 z_3 z_4-468 z_0^3 z_2^3 z_3 z_4+34908 
    z_0 z_1^2 z_2^3 z_3 z_4-18 z_0 z_2^5 z_3 z_4 \\ & +567 z_0^4 z_1 z_3
    ^2 z_4-11718 z_0^2 z_1^3 z_3^2 z_4+149843 z_1^5 z_3^2 z_4-4354 z_0^2 z_1 z_2^2 z_3^2 z_4+12030 z_1^3 z_2^2 z_3^2 z_4 \\ & +607 z_1 z_2^
    4 z_3^2 z_4+36 z_0^3 z_2 z_3^3 z_4+52 z_0 z_1^2 z_2 z_3^3 z_4-
    52 z_0 z_2^3 z_3^3 z_4-45 z_0^2 z_1 z_3^4 z_4+1139 z_1^3 z_3^4 
    z_4 \\ & +49 z_1 z_2^2 z_3^4 z_4-2 z_0 z_2 z_3^5 z_4+z_1 z_3^6 z_4+
    81 z_0^6 z_4^2+10458 z_0^4 z_1^2 z_4^2-16391 z_0^2 z_1^4 z_4^2-
    1683152 z_1^6 z_4^2 \\ & +153 z_0^4 z_2^2 z_4^2-22028 z_0^2 z_1^2 z_2^2 
    z_4^2-12265 z_1^4 z_2^2 z_4^2-65 z_0^2 z_2^4 z_4^2+7026 z_1^2 z_2
    ^4 z_4^2-9 z_2^6 z_4^2 \\ & +3600 z_0^3 z_1 z_2 z_3 z_4^2-23272 z_0 z_1^3 z_2 z_3 z_4^2-3120 z_0 z_1 z_2^3 z_3 z_4^2-18 z_0^4 z_3^2 z_4^2+1538 z_0^2 z_1^2 z_3^2 z_4^2 \\ & -30776 z_1^4 z_3^2 z_4^2+188 z_0^
    2 z_2^2 z_3^2 z_4^2-1682 z_1^2 z_2^2 z_3^2 z_4^2-26 z_2^4 z_3^2 
    z_4^2+72 z_0 z_1 z_2 z_3^3 z_4^2+z_0^2 z_3^4 z_4^2 \\ & -104 z_1^2 z_3^4 z_4^2-z_2^2 z_3^4 z_4^2-729 z_0^4 z_1 z_4^3-3690 z_0^2 z_1^3 
    z_4^3+297839 z_1^5 z_4^3+1294 z_0^2 z_1 z_2^2 z_4^3 \\ & +7154 z_1^3 z_2^2 z_4^3-433 z_1 z_2^4 z_4^3-108 z_0^3 z_2 z_3 z_4^3+732 z_0 z_1^2 z_2 z_3 z_4^3+92 z_0 z_2^3 z_3 z_4^3 \\ & -90 z_0^2 z_1 z_3^2 z_4
    ^3+3106 z_1^3 z_3^2 z_4^3+98 z_1 z_2^2 z_3^2 z_4^3-4 z_0 z_2 z_3
    ^3 z_4^3+3 z_1 z_3^4 z_4^3+18 z_0^4 z_4^4+712 z_0^2 z_1^2 z_4^4 \\ & -
    31370 z_1^4 z_4^4-28 z_0^2 z_2^2 z_4^4-904 z_1^2 z_2^2 z_4^4+10 z_2^4 z_4^4+36 z_0 z_1 z_2 z_3 z_4^4+2 z_0^2 z_3^2 z_4^4-154 z_1^
    2 z_3^2 z_4^4 \\ & -2 z_2^2 z_3^2 z_4^4-45 z_0^2 z_1 z_4^5+1967 z_1^3 
    z_4^5+49 z_1 z_2^2 z_4^5-2 z_0 z_2 z_3 z_4^5+3 z_1 z_3^2 z_4^5+
    z_0^2 z_4^6-68 z_1^2 z_4^6-z_2^2 z_4^6 \\ & +z_1 z_4^7
  \end{array}}
  \]In order to find a point in the hyperbolicity cone of this polynomial, let $$P_1^{(0)}=(-1,0),P_2^{(0)}=(0,6),P_3^{(0)}=(0,-6),$$ $$P_1^{(1)}=(-\frac{1}{2},\frac{15}{8}\sqrt{7}),P_2^{(1)}=(1,0),P_3^{(1)}=(-\frac{1}{2},-\frac{15}{8}\sqrt{7}).$$Then with the notation as in the proof of \Cref{thm:tollimpliesveryreal} the vector space $V$ consists of all linear forms in the $z_i$ coordinates that vanish at the point $e=(0:3:-1:0:28)$. Therefore, the polynomial $f$ is hyperbolic with respect to $e$. Unlike in the case of elliptic curves, the singular locus of the secant variety does not only consist of the curve itself. Indeed, for every two points $P,Q$ on $X$ there are points $P',Q'$ such that $D-(P+Q+P'+Q')$ is a canonical divisor on $X$. Riemann--Roch implies that the linear space spanned by $P+Q+P'+Q'$ has dimension two. This shows that the secant spanned by $P$ and $Q$ intersects the one spanned by $P'$ and $Q'$. This point of intersection is a singular point of our secant variety. Since we have (at least) one such point for each secant, the singular locus of the secant variety is two-dimensional. Note however that these types of singularities do not occur on the boundary of the hyperbolicity cone of $f$. Finally, we note that there is a unique secant which is entirely contained in the singular locus: The one spanned by the points $R,S$ with the property that $R+S$ is linearly equivalent to $D-2K$. In our explicit example, these are the points $(1:0:0:3:0)\textrm{ and }(1:0:0:-3:0).$
\end{ex}

When $\sigma_k(X)$ is a hypersurface, we want to determine its hyperbolicity cone. 

\begin{lem}\label{prop:Dveryample}
Let $X$ be an $M$-curve and let $D$ be a {\placeholdertwo} divisor on $X$ of degree $n+g$ with $n\geq 3$.
\begin{enumerate}
\item The divisor $D$ is very ample with $\ell(D) = n+1$.
\item Any $m$ points on $\phi(S_0)\subset\P^n$ are in general linear position. Here $\phi:X\to\pp^n$ is the embedding corresponding to $D$.
\end{enumerate}
\end{lem}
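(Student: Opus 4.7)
I would prove the two parts separately. The first equality in (1), $\ell(D) = n+1$, is a direct application of Riemann--Roch combined with Lemma \ref{lem:tollnonspecial}: the latter gives $\ell(K-D) = 0$, hence
\[
\ell(D) = \deg(D) - g + 1 + \ell(K-D) = (n+g) - g + 1 = n+1.
\]

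For very ampleness in (1), I would show that $\ell(D - P - Q) = n-1$ for every pair of (possibly equal) points $P, Q \in X(\C)$, which by Riemann--Roch is equivalent to $\ell(K - D + P + Q) = 0$. Since $\deg(K - D + P + Q) = g - n$, the claim is immediate whenever $n > g$, and in particular for $g \leq 2$ it follows from $n \geq 3$ alone. In the remaining range $3 \leq n \leq g$, I would proceed by a case analysis on the locations of $P$ and $Q$ on the connected components of $X(\R)$ (or, if non-real, on their complex conjugate pair), using the parity invariant from \cite[Lemma~4.1]{grossharris}: whenever $P, Q$ lie either on $S_0$, on a common component $S_i$, or form a conjugate pair, the divisor $D - P - Q$ remains {\placeholdertwo} and Lemma \ref{lem:tollnonspecial} applies directly; for the cross-component cases I would invoke the sharper form of Huisman's parity characterization (of which Lemma \ref{lem:tollnonspecial} is the extracted sufficient direction) to exclude effectivity of $K - D + P + Q$.

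Part (2) is the cleaner of the two, and it hinges only on the tools already cited. Fix $m \leq n+1$ points $P_1, \ldots, P_m \in \phi(S_0)$. Being in general linear position in $\P^n$ is equivalent to the statement that these points impose $m$ independent conditions on hyperplanes, i.e.\
\[
\ell(D - P_1 - \cdots - P_m) = \ell(D) - m = n+1-m.
\]
Since every $P_i$ lies on $S_0$, subtracting $P_1 + \cdots + P_m$ does not alter the parity of the degree on any of the components $S_1, \ldots, S_g$, so the resulting divisor is again {\placeholdertwo}. Applying Lemma \ref{lem:tollnonspecial} and Riemann--Roch once more yields the claimed dimension.

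The main obstacle is therefore the very ampleness case analysis in (1), specifically for those pairs $P, Q$ where the shifted divisor $D - P - Q$ loses the {\placeholdertwo} property (one point on $S_0$ and one on some $S_i$ with $i \geq 1$, or points on two distinct non-$S_0$ components). Here a direct appeal to Lemma \ref{lem:tollnonspecial} is insufficient and one must rely on the full strength of Huisman's characterization of nonspecial divisors on $M$-curves, or alternatively on a Brill--Noether dimension argument to rule out the existence of an effective divisor in the class of $K - D + P + Q$. Part (2), by contrast, is an immediate consequence of the lemmas already available.
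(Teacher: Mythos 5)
Your strategy is the paper's strategy: both parts are reduced to nonspecialty of twisted divisors (via Huisman) plus Riemann--Roch, and your computation of $\ell(D)=n+1$ and your argument for part (2) agree with the paper's proof (the paper additionally reduces to $m\le n+1$ and, before citing \cite[Thm.~2.5]{HuismanMR1969741}, checks the numerical hypothesis $\deg(D_{-m})+g\ge 2g-1$, which is exactly the bound $m\le n+1$).

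The gap is precisely the step you defer in part (1). The cross-component pairs are not an obstacle requiring an unspecified ``sharper form'' of the parity characterization, nor a Brill--Noether argument: the result behind \Cref{lem:tollnonspecial} is \cite[Thm.~2.5]{HuismanMR1969741}, which gives nonspecialty of any divisor $E$ on an $M$-curve once $\deg(E)$ plus the number of components $S_i$ on which $E$ has odd degree is at least $2g-1$, and this is what the paper applies to $E=D-P-Q$ for \emph{all} closed points $P,Q$ of $X$ (so $\deg P\in\{1,2\}$), with no case distinction. In the worst case the parity is lost on two of $S_1,\dots,S_g$, or the degree drops by up to $4$, and in every case the relevant sum is at least $n+2g-4$; so the hypothesis $n\ge 3$ is exactly what makes Huisman's criterion apply -- this is the one short verification your sketch omits, and it is where $n\ge3$ enters. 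Your proposed fallback does not work as stated: a Brill--Noether dimension count can only show that a \emph{general} divisor class of degree $n+g$ avoids the classes $K-D+P+Q$, not that the specific maximally odd $D$ does (and for $3\le n\le g$ there do exist divisors of degree $n+g$ that fail very ampleness), so the parity input cannot be replaced by genericity. Finally, if you insist on checking separation at geometric points of $X_\C$, your three cases miss the mixed pairs (one real and one non-real point, or two non-conjugate non-real points), for which $D-P-Q$ is not a real divisor; either phrase the criterion for closed points of $X$ as the paper does, or reduce to the conjugation-stable subtraction and use that removing a point lowers $h^0$ by at most one.
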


\begin{proof}
We first show that $\ell(D) = n+1$. By Lemma \ref{lem:tollnonspecial} $D$ is non-special. Therefore, $\ell(D) = \deg(D) + 1 - g = n+1$. To show that $D$ is very ample, we use again \cite[Thm. 2.5]{HuismanMR1969741} to verify that $\ell(D-P) = \ell(D) - \deg P$ and $\ell(D-P-Q) = \ell(D)-\deg (P + Q)$ hold for all $P,Q \in X$. Note that $\deg P=2$ for any non-real point $P\in X$.

To show that any $m$ points on $\phi(S_0)$ are in general linear position, we use the same argument. First, we can assume that $m\leq n+1 = \deg(D) + 1 - g$. Pick $P_1,P_2,\ldots,P_m\in \phi(S_0)$. The divisor $D_{-m} = D-(P_1+P_2+\ldots + P_m)$ has odd degree on $S_1,\ldots,S_g$ and $\deg(D_{-m}) + g = n-m+g + g \geq 2g-1$ so that \cite[Theorem~2.5]{HuismanMR1969741} again shows that the divisor $D_{-m}$ is non-special. Therefore, we have
\[
\ell\left(D-(P_1+P_2+\ldots+P_m)\right)) = n+g-m + 1 - g = n + 1 - m,
\]
which shows that the span of the points $P_1,\ldots,P_m$ has the correct dimension $m-1$.
\end{proof}

\begin{prop}\label{prop:hypconetoll}
  Let $X\subset \pp^{2k}$ be an $M$-curve embedded via the complete linear system of a {\placeholdertwo} divisor. The secant variety $\sigma_{k-1}(X)$ is hyperbolic and its hyperbolicity cone is $\conv(S_0)$, where $S_0$ is the very compact connected component of $X(\R)$, see \Cref{rem:verycompact}.
\end{prop}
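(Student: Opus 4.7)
The plan is to identify the hyperbolicity cone $K$ of $\sigma_{k-1}(X)$ with $\conv(S_0)$ by establishing two geometric facts: (i) $\partial \conv(S_0) \subset \sigma_{k-1}(X)$, and (ii) every $e \in \Int(\conv(S_0)) \setminus \sigma_{k-1}(X)$ is a direction of hyperbolicity for $\sigma_{k-1}(X)$. That $\sigma_{k-1}(X)$ is a hyperbolic hypersurface in $\P^{2k}$ is immediate from \Cref{cor:tollhyp} together with Lange's theorem that the secants of an irreducible nondegenerate curve have expected dimension $2k-1$.

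For (i), let $p \in \partial \conv(S_0)$ and pick a real supporting hyperplane $H$ of $\conv(S_0)$ at $p$. Since $X$ is smooth and $H$ cannot cross $S_0$, every point of $H \cap S_0$ meets $X$ with even intersection multiplicity, hence at least $2$. Because $D$ is \placeholdertwo, the divisor $H.X \sim D$ has odd (and thus nonzero) multiplicity on each $S_i$ for $i \geq 1$, contributing at least $g$ to the total degree $\deg(H.X) = 2k+g$; consequently the multiplicity of $H.X$ along $S_0$ is at most $2k$ and $|H \cap S_0| \leq k$. The face $H \cap \conv(S_0) = \conv(H \cap S_0)$ is therefore the convex hull of at most $k$ points of $X$, so it lies in the $(k-1)$-plane they span and hence in $\sigma_{k-1}(X)$.

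For (ii), by \Cref{thm:secantchara} we must show the $(2k-1)$-dimensional linear system of real hyperplanes through $e$ is \placeholderone. Any such $H$ does not support $\conv(S_0)$, since $e \in H$ lies in its interior; so $S_0$ has points strictly on both sides of $H$, and being connected and topologically $S^1$, $S_0$ crosses $H$ transversely in an even positive number of points, yielding at least $2$ distinct real points in $\supp(H \cap S_0)$. Combined with at least one real intersection on each of $S_1, \ldots, S_g$ (from odd parity), this gives $|\supp(H.X) \cap X(\R)| \geq g+2$, hence $|\supp(H.X) \setminus X(\R)| \leq 2k - 2 = 2(k-1)$, the required \placeholderone{} bound.

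With (i) and (ii), the (unique) hyperbolicity cone $K$ contains the dense open subset $\Int(\conv(S_0)) \setminus \sigma_{k-1}(X)$ of $\conv(S_0)$, so $K \supset \conv(S_0)$ on taking closure. Conversely, picking any $e \in \Int K$ and any $e_0 \in \Int K \cap \Int(\conv(S_0))$, the segment from $e_0$ to $e$ stays in the convex set $\Int K$ and hence avoids $\sigma_{k-1}(X)$; by (i) it cannot cross $\partial \conv(S_0)$ either, so $e \in \conv(S_0)$. Hence $K \subset \conv(S_0)$ and we conclude $K = \conv(S_0)$. The main technical step is (i): careful accounting of intersection multiplicities via the \placeholdertwo{} condition is what forces every face of $\conv(S_0)$ to sit in a $(k-1)$-secant of $X$.
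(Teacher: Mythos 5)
Your proof is correct, and it takes a genuinely different route from the paper's for the identification of the cone. The paper proves two inclusions of a different flavour: for $\conv(S_0)\subset K$ it uses convex duality --- by \cite{Sinn} the dual cone is generated by the real regular points of the dual variety, and by Terracini's Lemma \cite{JoinsAndIntersections} such a point corresponds to a divisor $2(p_1+\cdots+p_k)+\sum_i q_i$ with $q_i\in S_i$, hence has constant sign on $S_0$ --- while for $K\subset\conv(S_0)$ it runs a separating-hyperplane argument whose core is exactly your counting in (i): even contact along $S_0$, odd degree on $S_1,\ldots,S_g$, hence at most $k$ contact points and the exposed face lies in a $(k-1)$-secant. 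You replace the duality/Terracini half by applying \Cref{thm:secantchara} pointwise: every $e\in\Int(\conv(S_0))\setminus\sigma_{k-1}(X)$ sees a {\placeholderone} system of hyperplanes (two crossings of $S_0$ plus one point on each other oval), so it is a direction of hyperbolicity, and you then use irreducibility of the secant variety together with \cite{onecone} to place all these points in the single hyperbolicity cone; the reverse inclusion becomes a clean connectedness argument (the interior of $K$ avoids the hypersurface while $\partial\conv(S_0)$ lies in it), rather than the paper's separation-and-contradiction step. What the paper's route buys is the explicit contact divisor $2(p_1+\cdots+p_k)+\sum_i q_i$ on the boundary, which prefigures the interlacer and determinantal constructions later on; what yours buys is independence from \cite{Sinn} and Terracini, at the cost of leaning on \Cref{thm:secantchara} and the connectivity result \cite{onecone}. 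Three small touch-ups: in (ii) drop ``transversely'' (tangential crossings can occur; the intermediate value theorem on $S_0\cong S^1$ already gives two distinct real zeros, which is all you use); in both (i) and (ii) the claim that every hyperplane section has odd, hence nonzero, degree on each $S_i$ with $i\geq 1$ rests on the invariance of the parity of real zeros on each oval under linear equivalence, so cite \cite[Lemma~4.1]{grossharris}; and the identity $H\cap\conv(S_0)=\conv(H\cap S_0)$ should be justified by Minkowski's theorem together with the fact that extreme points of $\conv(S_0)$ lie on $S_0$ (only the inclusion $H\cap\conv(S_0)\subset\conv(H\cap S_0)$ is actually needed).
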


\begin{proof}
Let $f\in\R[x_0,x_1,\ldots,x_{2k}]$ be the hyperbolic polynomial defining $\Sec_{k-1}(X)$ in $\R^{2k+1}$. We will prove that the (closed) hyperbolicity cone $C$ of $f$ with respect to a point $e$ of hyperbolicity is the cone generated by $\widehat{S_0}\cap H_+$. Here $H_+$ is the half-space $\{x\in\R^{2k+1}\colon \langle x,e\rangle \geq 0\}$.

To show the inclusion $\cone(\widehat{S_0}\cap H_+)\subset C$, we use convex duality to show $\cone(\widehat{S_0}\cap H_+)^\vee \supset C^\vee$. By \cite[Example~3.15]{Sinn}, the convex cone $C^\vee$ is up to closure the convex cone generated by the regular real points on the dual variety $\Sec_{k-1}(X)^*_{\rm reg}(\R)\cap H_+$. A general point in $\ell\in\Sec_{k-1}(X)^*_{\rm reg}$ is tangent to $\Sec_{k-1}(X)$ at a general point and by Terracini's Lemma \cite[Proposition~4.3.2]{JoinsAndIntersections}, this means that $\ell$ vanishes on the tangent lines $X$ at $k$ regular points $p_1,p_2,\ldots,p_k\in X$. The linear form $\ell$ corresponds to the divisor $2(p_1+p_2+\ldots+p_k) + \sum_{i=1}^g q_i$ on $X$ for suitable points $q_i\in S_i$. It follows that $\ell$ has constant sign on $\widehat{S_0}\cap H_+$, which shows the desired inclusion.

To show the remaining inclusion $C\subset \cone(\widehat{S_0}\cap H_+)$, we argue by contradiction. Write $C'$ for $\cone(\widehat{S_0}\cap H_+)$ and suppose $C'$ were properly contained in $C$. Then there is a supporting hyperplane $\{x\in\R^{2k+1}\colon \ell(x)\geq 0\}$ to $C'$ separating a point $x\in C\setminus C'$ from $C'$. Let $F$ be the face of $C'$ supported by $\ell$.
The divisor on $X$ corresponding to $\ell$ has at least one point on $S_i$ for every $i>0$. Therefore, the face $F$ can have at most $k$ extreme points. This shows that the face $F$ contains a point of $\Sec_{k-1}(X)$, which contradicts the fact that hyperbolic polynomials are always non-zero on their hyperbolicity cones.
\end{proof}

\begin{cor}\label{cor:simplicial}
  Let $X\subset \pp^{2k}$ be an $M$-curve embedded via the complete linear system of a {\placeholdertwo} divisor. The hyperbolicity cone of $\Sec_{k-1}(X)$ is simplicial.
\end{cor}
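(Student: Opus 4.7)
The plan is to combine the explicit description of the hyperbolicity cone from \Cref{prop:hypconetoll} with the general-position property from \Cref{prop:Dveryample}. By the former, the hyperbolicity cone of $\Sec_{k-1}(X)$ (viewed projectively) is $\conv(S_0)$, so I must show that every proper face of $\conv(S_0)$ is a simplex.

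First, I fix a proper face $F \subset \conv(S_0)$, supported by a real hyperplane $H \subset \pp^{2k}$. Since $\conv(S_0) = \conv(\phi(S_0))$ is the convex hull of a compact connected set, $F = \conv(\phi(S_0) \cap H)$ and the extreme points of $F$ are contained in $\phi(S_0) \cap H$. Write $p_1,\ldots,p_m$ for these extreme points.

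Next, I bound $m$ by an intersection count on $X$. The divisor $H \cdot X$ has degree $2k+g$. Because $D$ is \placeholdertwo{}, $H \cap X$ has odd intersection degree on each component $S_j$ with $j \geq 1$, contributing at least $g$ to $\deg(H \cdot X)$ in total. On the other hand, each $p_i$ lies on $\phi(S_0)$ and is on the boundary of $\conv(S_0)$, so $H$ must contain the (real) tangent line to $X$ at $p_i$ — otherwise $S_0$ would locally cross $H$, contradicting that $H$ is a supporting hyperplane. Hence $p_i$ enters $H \cdot X$ with multiplicity at least $2$. Combining,
\[
2m + g \;\leq\; \deg(H \cdot X) \;=\; 2k + g,
\]
so $m \leq k$.

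Finally, since $m \leq k < 2k+1$, \Cref{prop:Dveryample}(2) tells us that the $m$ points $p_1,\ldots,p_m$ on $\phi(S_0)$ are in general linear position, hence span an $(m-1)$-dimensional projective subspace and their convex hull is a simplex. Thus $F$ is a simplex, and the hyperbolicity cone is simplicial.

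The main obstacle is the intersection-multiplicity step: one must argue that each extreme point of $F$ on $\phi(S_0)$ genuinely contributes a tangency (multiplicity at least $2$) to $H \cdot X$. This uses the smoothness of $X$ (so the tangent line is well-defined) together with the fact that $F$ is cut out by a supporting hyperplane of a convex set containing $\phi(S_0)$ on one side. All the other ingredients — the identification of the hyperbolicity cone with $\conv(S_0)$, the parity of intersections on the non-compact components, and general linear position of few points on $\phi(S_0)$ — are already packaged as earlier results in the paper.
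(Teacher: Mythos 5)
Your proof is correct and takes essentially the same route as the paper: identify the hyperbolicity cone with $\conv(S_0)$ via \Cref{prop:hypconetoll}, bound the number of extreme points of any proper face by $k$ using the odd degree of the hyperplane section on $S_1,\ldots,S_g$ together with the even (tangential) intersection multiplicity at supporting points of $S_0$, and conclude by the general-position statement of \Cref{prop:Dveryample}. The only difference is that you make the ``at most $k$ extreme points per face'' count explicit, whereas the paper's proof relies on this bound as established inside the proof of \Cref{prop:hypconetoll}.
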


\begin{proof}
By \Cref{prop:Dveryample}, any $k$ points on $S_0$ are in general linear position. Since the hyperbolicity cone of $\Sec_{k-1}(X)$ is the convex hull of $S_0$, this implies the claim.
\end{proof}

\begin{cor}
  Let $X\subset \pp^{2k}$ be an $M$-curve embedded via the complete linear system of a {\placeholdertwo} divisor. The hyperbolicity cone of $\Sec_{k-1}(X)$ has no $\Sym_+(\R^k)$-lift.
\end{cor}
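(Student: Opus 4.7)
By \Cref{prop:hypconetoll}, the hyperbolicity cone equals $K = \conv(S_0) \subset \R^{2k+1}$; by \Cref{cor:simplicial}, $K$ is simplicial; and by \Cref{prop:Dveryample}, any $2k+1$ of its extreme rays are linearly independent in $\R^{2k+1}$. Consequently every facet of $K$ is a simplicial cone spanned by exactly $2k+1$ distinct points of $S_0$, and the extreme rays of $K$ form a real $1$-parameter family (the cone over $S_0$) in very general linear position. The goal is to show that this structure is incompatible with a lift $K = \pi(L \cap \Sym_+(\R^k))$ for any linear subspace $L$ and linear projection $\pi$.

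The range $k \leq 3$ is settled at once by a dimension count: $\dim \Sym_+(\R^k) = \tfrac{k(k+1)}{2} < 2k+1 = \dim K$, so $L \cap \Sym_+(\R^k)$ cannot surject linearly onto $K$. For $k \geq 4$ I would assume such a lift and invoke the Gouveia--Parrilo--Thomas correspondence, which equates $\Sym_+(\R^k)$-lifts with positive semidefinite factorizations of the slack operator of $K$. This produces, for each $p \in S_0$, a matrix $A_p \in \Sym_+(\R^k)$ and, for each facet $F$ of $K$, a matrix $B_F \in \Sym_+(\R^k)$ of rank at most $k$, such that the nonnegative function $p \mapsto \langle A_p, B_F \rangle$ on $S_0$ vanishes precisely at the $2k+1$ vertices of $F$. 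Writing $B_F = \sum_{i=1}^k u_i u_i^T$, this function becomes a sum of $k$ nonnegative algebraic functions $u_i^T A_p u_i$ on $S_0$, each arising from a global section of a line bundle on the underlying curve $X$.

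The main step, and the principal obstacle, is a vanishing estimate in the spirit of Saunderson~\cite{saunderson2019limitations} and Averkov~\cite{averkov2019optimal}. Using that a real global section which is nonnegative on the compact circle $S_0$ must vanish there with even multiplicity, and using the degree bounds for the relevant line bundles forced by \Cref{prop:Dveryample} and the {\placeholdertwo} divisor $D$, one shows that the common zero set on $S_0$ of $k$ such nonnegative sections contains at most $2k$ points, contradicting the $2k+1$ vertices demanded by the facet. The delicate point is to upgrade the a priori only measurable GPT factorization to an algebraic family $p \mapsto A_p$ over $S_0$, which should be possible thanks to smoothness of the extreme-ray and facet data of $K$ (a consequence of the very-ample {\placeholdertwo} embedding of $X$); once this is done, Riemann--Roch for the relevant line bundle on $X$ together with the parity-of-vanishing argument already used in the proof of \Cref{thm:tollimpliesveryreal} yields the required degree estimate and closes the argument.
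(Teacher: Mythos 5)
Your plan rests on a structural claim about $K=\conv(S_0)$ that is false, and the step you yourself flag as the ``principal obstacle'' is exactly the part that is missing. The cone $K$ is not polyhedral: it has no facets at all in the polyhedral sense, and in particular no face spanned by $2k+1$ points of $S_0$. As the proof of \Cref{prop:hypconetoll} shows, a supporting linear functional corresponds to a divisor on $X$ that has at least one point on each oval $S_i$ with $i>0$ and vanishes to even order on $S_0$, so every proper face of $K$ is a simplex with at most $k$ extreme rays (this is the content of \Cref{cor:simplicial}). Consequently the matrix $B_F$ you want, whose slack $\langle A_p,B_F\rangle$ vanishes exactly at ``the $2k+1$ vertices of a facet,'' does not exist, and with the correct bound of at most $k$ vertices per face your counting argument ($k$ nonnegative sections versus $2k+1$ common zeros on $S_0$) produces no contradiction. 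Moreover, even granting some face data, the passage from a Gouveia--Parrilo--Thomas positive semidefinite factorization of the slack operator to an \emph{algebraic} family $p\mapsto A_p$ over $S_0$ is not a formality that smoothness of $X$ provides: a psd factorization is a priori just a pointwise assignment with no continuity, let alone algebraicity, and this is precisely why the known lower-bound proofs of Averkov \cite{averkov2019optimal} and Saunderson \cite{saunderson2019limitations} are organized around ranks and faces rather than around algebraizing factorizations. So the argument as proposed has a genuine gap (the $k\le 3$ dimension count is correct but does not touch the interesting range).

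For comparison, the paper's proof is a direct application of the cited results: \Cref{cor:simplicial} verifies that every proper face of the hyperbolicity cone is a simplex (with at most $k$ extreme rays, while the cone itself has dimension $2k+1$ and is not polyhedral), and this is exactly the hypothesis under which \cite[Thm.~1.4]{saunderson2019limitations} or \cite{averkov2019optimal} excludes a $\Sym_+(\R^k)$-lift. If you want something closer to self-contained, the mechanism behind those results in this situation is a face-chain argument rather than a slack-matrix computation: a maximal face of $K$ yields a chain of faces $\{0\}\subsetneq F_1\subsetneq\cdots\subsetneq F_k\subsetneq K$ of length $k+2$; preimages of faces of $K$ under the projection of a lift are faces of the slice $L\cap\Sym_+(\R^k)$, and chains of faces of such a slice are no longer than chains of faces of $\Sym_+(\R^k)$, which have length at most $k+1$. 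That route uses only the facial description furnished by \Cref{prop:Dveryample} and \Cref{prop:hypconetoll} and avoids both the nonexistent facets and the algebraization problem.
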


\begin{proof}
  This follows from \Cref{cor:simplicial} and \cite[Thm.~1.4]{saunderson2019limitations} or \cite{averkov2019optimal}.
\end{proof}

\begin{ex}\label{ex:hankel}
  If $X\subset\pp^{2k}$ is a rational normal curve, then it is well known \cite[Prop.~9.7]{Ha95} that (after a suitable choice of coordinates) the hypersurface $\sigma_{k-1}(X)$ is defined by the determinant of the $(k+1)\times(k+1)$ Hankel matrix
\[H_k(x)=\begin{pmatrix}
  x_{0} & x_{1} & x_{2} & \ldots & \ldots  &x_{k}  \\
  x_{1} & x_2 &  &  & &\vdots \\
  x_{2} &  &  & & & \vdots \\ 
 \vdots & & & &  & x_{2k-2}\\
 \vdots & &  & & x_{2k-2}&  x_{2k-1} \\
x_{k} &  \ldots & \ldots & x_{2k-2} & x_{2k-1} & x_{2k}
\end{pmatrix}.
    \]The hyperbolicity cone of $\sigma_{k-1}(X)$ is the set of all $[x]$ where $H_k(x)$ is semidefinite. In \Cref{sec:elliptic} we will prove a similar result for elliptic normal curves.
\end{ex}

We end this section with an explicit example of a hyperbolic secant variety of a curve that is not embedded via a {\placeholdertwo} divisor. It would be useful to have a description of hyperbolicity cones in these situations as well.

\begin{ex}
 Consider the plane elliptic curve from \cite[Exp.~5.2]{kummersep} defined by $$-z^3+2xz^2-x^3+y^2z=0$$ and its image $X\subset\pp^5$ under the second Veronese map. It can be read off the link diagram in \cite[Fig.~4]{kummersep} that $X$ admits a linear projection to $\pp^3$ as a maximally writhed algebraic link. Thus $\sigma_1(X)$ is hyperbolic with respect to the center of this linear projection. Since any divisor obtained from a hyperplane section has even degree on each connected component of $X(\R)$, it cannot be {\placeholdertwo}. In the terminology of \Cref{rem:degreepartitions} this {\placeholderone} linear system realizes the pair $(2,2)$.
\end{ex}

\subsection{Real enumerative geometry}
Let $C\subset\pp^{2k}$ be a general nondegenerate curve of degree $d$ and genus $g$. Recall from \cite[p.~351]{ACGHI} that the number of secants spanned by $k+1$ points of $C$ that have dimension $k-1$ equals
\[
  \sum_{j=0}^{k+1}(-1)^{j}\binom{g+2k-d}{j}\binom{g}{k+1-j}.
\]
In this formula, $\binom{n}{k}$ may have a negative number $n$, in which case this is to be interpreted as $n(n-1)\cdot\ldots\cdot (n-k+1)/k!$. This means that $\binom{n}{k} = (-1)^k\binom{-n-1+k}{k}$ for $n<0$. The binomial coefficient is $0$, if $k>n\geq 0$. 

We want to show that for all $d,g,k$ with $d \geq 2k + g + 1$ there are real curves $C$ such that all of these special secants are real. Indeed, let $C$ be an $M$-curve of genus $g$ and let $D$ be a {\placeholdertwo} divisor of degree $d\geq 2k+g+1$. By \Cref{thm:tollimpliesveryreal} there exists a {\placeholderone} linear system $\fD\subset|D|$ of dimension $2k+1$. Thus by \Cref{thm:secantchara} the secant variety $\sigma_k(C)$ of $C\subset\pp^{d-g}$ embedded via the complete linear system $|D|$ is hyperbolic with respect to some linear space $E\subset\pp^{d-g}$. A general subspace $E'\supset E$ of codimension $2k+1$ intersects $\sigma_k(C)$ only in real $k$-secants. If we project from $E'$ to $\pp^{2k}$, exactly those $k$-secants of $C$ that intersect $E'$ will be mapped to special secants of the image of $C$ in $\pp^{2k}$ that have dimension $k-1$. Thus the above enumerative problem admits totally real instances.

\begin{thm}\label{thm:totallyreal}
  For all positive integers $d,g,k$ with $d \geq 2k + g + 1$ there is a real curve $C\subset\pp^{2k}$ such that all secants spanned by $k+1$ points of $C$ that have dimension $k-1$ are real.\qed
\end{thm}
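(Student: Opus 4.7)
The plan is to construct $C$ as a linear projection of an $M$-curve $\tilde C \subset \pp^{d-g}$, embedded via a maximally odd divisor, along a real subspace chosen so that the hyperbolicity of $\sigma_k(\tilde C)$ established in \Cref{thm:secantchara} propagates to the image. Concretely, I would first take an $M$-curve $\tilde C$ of genus $g$ together with a maximally odd divisor $D$ of degree $d$ and embed $\tilde C \hookrightarrow \pp^{d-g}$ via the very ample linear system $|D|$ (using \Cref{prop:Dveryample}). Applying \Cref{thm:tollimpliesveryreal} produces a vastly real subsystem $\fD \subset |D|$ of projective dimension $2k+1$, corresponding to a real linear subspace $E \subset \pp^{d-g}$ of codimension $2k+2$, disjoint from $\tilde C$, such that $\pi_E|_{\tilde C}$ realizes the morphism given by $\fD$. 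By \Cref{thm:secantchara}, $\sigma_k(\tilde C)$ is then hyperbolic with respect to $E$, so the projection $\pi_E \colon \sigma_k(\tilde C) \to \pp^{2k+1}$ is real fibered.

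Next, I would pick a generic real linear subspace $E' \subset \pp^{d-g}$ of codimension $2k+1$ containing $E$. Under $\pi_E$, such an $E'$ corresponds to a single real point $q \in \pp^{2k+1}$, and real-fiberedness forces $\sigma_k(\tilde C) \cap E' \subset \pp^{d-g}(\R)$. For generic $E'$, \Cref{lem:generalsecant} ensures that each point of this finite intersection lies on a unique $k$-secant of $\tilde C$ spanned by $k+1$ distinct points; since such a secant is uniquely determined by a real point it contains, it must coincide with its complex conjugate and is thereby defined over $\R$. Setting $C := \pi_{E'}(\tilde C) \subset \pp^{2k}$, a dimension count shows that for generic $E'$ the projection $\tilde C \to C$ is birational onto a nondegenerate curve of degree $d$ and genus $g$, and that every $(k-1)$-plane of $\pp^{2k}$ spanned by $k+1$ points of $C$ is the image of a unique real $k$-secant of $\tilde C$ meeting $E'$. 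Consequently every such special secant of $C$ is defined over $\R$.

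The main technical point is the genericity of $E'$: one must ensure that $\pi_{E'}|_{\tilde C}$ is a birational embedding onto a nondegenerate curve in $\pp^{2k}$, that no two distinct $k$-secants of $\tilde C$ meeting $E'$ collapse to the same $(k-1)$-plane, and that no special $(k-1)$-secant of $C$ arises from a degenerate incidence such as a lower-dimensional secant of $\tilde C$ contained in $E'$. Each of these is an open condition on the Grassmannian of codimension-$(2k+1)$ subspaces of $\pp^{d-g}$ containing $E$, and simultaneous satisfiability in the range $d \geq 2k+g+1$ is ensured by standard applications of the General Position Lemma in the style already employed in \Cref{lem:generalsecant,lem:realgeneral}.
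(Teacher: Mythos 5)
Your proposal is correct and takes essentially the same route as the paper's own argument: embed an $M$-curve of genus $g$ via a maximally odd divisor of degree $d$, invoke \Cref{thm:tollimpliesveryreal} and \Cref{thm:secantchara} to get hyperbolicity of $\sigma_k$ with respect to a center $E$, and project from a general codimension-$(2k+1)$ real subspace $E'\supset E$ so that the special $(k-1)$-secants of the image curve in $\pp^{2k}$ are exactly the images of the (real) $k$-secants meeting $E'$. The extra points you spell out -- that a general real intersection point lies on a unique $k$-secant, which therefore equals its conjugate, and the genericity conditions on $E'$ -- are precisely the details the paper leaves implicit.
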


For later reference we also record the degree of the secant varieties which follows from the above formula and the discussion afterwards.

\begin{lem}\label{lem:secantdeg}
  Let $C\subset\pp^{n}$ with $n>2k+1$ be a nondegenerate curve of degree $d$ and genus $g$. The variety $\sigma_k(C)$ has degree $D=\sum_{j=0}^{k+1}(-1)^{j}\binom{g+2k-d}{j}\binom{g}{k+1-j}$.\qed
\end{lem}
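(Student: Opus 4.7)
The approach is to reduce the degree computation of $\sigma_k(C) \subset \pp^n$ to the classical Castelnuovo-type enumerative formula for $(k+1)$-secant $(k-1)$-planes of curves in $\pp^{2k}$ that is cited at the beginning of the Real enumerative geometry subsection. The key observation is that the entire reasoning already appears in the paragraph preceding \Cref{thm:totallyreal}, only there it is used to produce totally real instances; here I would run the same projection argument but read it as a degree computation rather than a reality statement.

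Concretely, the plan is as follows. Since $n > 2k+1$, the secant variety $\sigma_k(C)$ is a proper subvariety of $\pp^n$ of dimension $2k+1$, so its degree is the number of points in the scheme-theoretic intersection $\sigma_k(C) \cap E'$ for a general linear subspace $E' \subset \pp^n$ of codimension $2k+1$. By \Cref{lem:generalsecant}, a general point of $\sigma_k(C)$ lies on a unique $k$-secant $k$-plane of $C$ meeting $C$ in $k+1$ distinct points, so by choosing $E'$ general the intersection points of $\sigma_k(C) \cap E'$ are in bijection with the $k$-secant $k$-planes of $C$ that meet $E'$. Thus the degree $\deg \sigma_k(C)$ equals the number of such secants.

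Next I would translate this count into an enumerative problem on $\pp^{2k}$ via the linear projection $\pi_{E'}\colon \pp^n \ratto \pp^{2k}$ centered at $E'$. For a general choice of $E'$, the restriction $\pi_{E'}|_C$ is a closed embedding whose image $C' := \pi_{E'}(C) \subset \pp^{2k}$ is a nondegenerate curve of the same degree $d$ and genus $g$ as $C$. A $k$-secant $\Lambda = \langle p_0,\ldots,p_k\rangle$ of $C$ meets $E'$ if and only if the images $\pi_{E'}(p_0),\ldots,\pi_{E'}(p_k)$ fail to span a $k$-plane, i.e.\ span only a $(k-1)$-plane. Consequently $\deg \sigma_k(C)$ equals the number of unordered $(k+1)$-tuples of points on $C'$ spanning a $(k-1)$-plane. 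Applying the formula from \cite[p.~351]{ACGHI} recalled at the beginning of the subsection yields the claimed expression.

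The only real point requiring care is that $C'$ be sufficiently general for the Castelnuovo-type formula to apply, i.e.\ that its $(k+1)$-secant $(k-1)$-planes form a finite scheme of the expected length. This is the standard genericity assumption in that formula, and it is guaranteed here because $E'$ is a general subspace, so $C'$ is a general isomorphic projection of $C$ into $\pp^{2k}$; any excess intersection would then correspond to a positive-dimensional component of $\sigma_k(C) \cap E'$, contradicting $\dim \sigma_k(C) + \dim E' = n$ and the generic transversality of the intersection with a general linear subspace of complementary dimension.
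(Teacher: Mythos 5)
Your argument is exactly the paper's intended justification: the lemma is stated with a \qed because it ``follows from the above formula and the discussion afterwards,'' namely that $\deg\sigma_k(C)$ is the number of points of $\sigma_k(C)\cap E'$ for a general codimension-$(2k+1)$ subspace $E'$, that these points correspond to the $k$-secants meeting $E'$, and that under the projection $\pi_{E'}$ these become precisely the $(k+1)$-secant $(k-1)$-planes of the image curve in $\pp^{2k}$ counted by the formula from \cite[p.~351]{ACGHI}. So the proposal is correct and takes essentially the same route as the paper, including the genericity/finiteness discussion at the end.
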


The cases we will later need are the following.

\begin{cor}\label{cor:deg}
  Consider a general, nondegenerate, projectively normal curve $C\subset\pp^{2k+2}$ of genus $g$. Then the degree of the hypersurface $\sigma_k(C)$ is equal to $$(k+2)\cdot\sum_{i=0}^{k+1}\binom{g}{i}-g\cdot\sum_{i=0}^{k}\binom{g-1}{i}.$$If $g\leq k+1$, then this number equals $(2k+4-g)\cdot 2^{g-1}$.
\end{cor}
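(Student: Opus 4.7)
The plan is to specialize the general secant degree formula of \Cref{lem:secantdeg} to our setting and then simplify the resulting binomial expression. The first step is to pin down the degree $d$ of $C$. Because $C\subset\P^{2k+2}$ is nondegenerate and projectively normal, the hyperplane sections form a complete linear system $|D|$ with $\ell(D)=2k+3$; for a general such curve this divisor is non-special, so Riemann--Roch yields $d=2k+g+2$.

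Substituting $d=2k+g+2$ into the formula of \Cref{lem:secantdeg} gives $g+2k-d=-2$, hence
\[
  \deg\sigma_k(C) \;=\; \sum_{j=0}^{k+1}(-1)^{j}\binom{-2}{j}\binom{g}{k+1-j}.
\]
The identity $\binom{-2}{j}=(-1)^{j}(j+1)$ causes the two sign factors to cancel, leaving $\sum_{j=0}^{k+1}(j+1)\binom{g}{k+1-j}$. Reindexing with $i=k+1-j$ rewrites this as $\sum_{i=0}^{k+1}(k+2-i)\binom{g}{i}$, which splits into
\[
  (k+2)\sum_{i=0}^{k+1}\binom{g}{i}\;-\;\sum_{i=0}^{k+1}i\binom{g}{i}.
\]
Applying $i\binom{g}{i}=g\binom{g-1}{i-1}$ to the second term and shifting the index produces exactly the first displayed formula.

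For the refined expression in the case $g\leq k+1$, both truncated binomial sums coincide with their full-range counterparts because $\binom{g}{i}=0$ for $i>g$ and $\binom{g-1}{i}=0$ for $i>g-1$. Thus $\sum_{i=0}^{k+1}\binom{g}{i}=2^{g}$ and $\sum_{i=0}^{k}\binom{g-1}{i}=2^{g-1}$, and factoring out $2^{g-1}$ gives $2^{g-1}(2k+4-g)$. The only potential subtlety is verifying the sign computation $(-1)^{j}\binom{-2}{j}=j+1$; once that is observed, the rest is routine manipulation of binomial identities, with no deeper obstacles.
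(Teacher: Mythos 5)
Your proposal is correct and follows essentially the same route as the paper: substitute $d=2k+g+2$ into \Cref{lem:secantdeg}, use $(-1)^j\binom{-2}{j}=j+1$, reindex, split off $i\binom{g}{i}=g\binom{g-1}{i-1}$, and evaluate the full binomial sums when $g\leq k+1$. The only addition is your explicit justification that $d=2k+g+2$ (completeness of the hyperplane system plus non-speciality for a general curve), which the paper takes for granted; your final evaluation $(k+2)2^g-g\,2^{g-1}=2^{g-1}(2k+4-g)$ is the intended one.
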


\begin{proof} 
  Here we have $d=2k+g+2$ and the formula from Lemma \ref{lem:secantdeg} simplifies to $$\sum_{j=0}^{k+1}(-1)^j\binom{-2}{j}\binom{g}{k+1-j}=\sum_{j=0}^{k+1}(j+1)\binom{g}{k+1-j}=\sum_{i=0}^{k+1}(k+2-i)\binom{g}{i}.$$
  This further simplifies to $$(k+2)\cdot\sum_{i=0}^{k+1}\binom{g}{i}-\sum_{i=0}^{k+1}i\cdot\binom{g}{i}=(k+2)\cdot\sum_{i=0}^{k+1}\binom{g}{i}-g\cdot\sum_{i=0}^{k}\binom{g-1}{i}.$$
  Since $\binom{g}{i}=0$ for $i>g$ in the case $g\leq k+1$ this further simplifies to $$(k+2)\cdot\sum_{i=0}^{g}\binom{g}{i}-g\cdot\sum_{i=0}^{g-1}\binom{g-1}{i}=(k+2)\cdot 2^g-g\cdot 2^{g+1}.\qedhere$$
\end{proof}

\section{Elliptic normal curves}\label{sec:elliptic}
Throughout this section, $C$ is an irreducible smooth complete curve of genus $1$ with a smooth real point. Then any divisor of odd degree $n\geq 3$ on $C$ is {\placeholdertwo} and embeds the curve $C$ into $\P^{n-1}$. If $C$ is an $M$-curve, then the secant varieties $\sigma_j(C)$ embedded into $\P^{n-1}$ by a complete linear system are hyperbolic varieties by \Cref{cor:tollhyp}. In this section, we discuss this case in detail and in particular give the construction of a definite determinantal representation for the case that $\sigma_j(C)$ is a hypersurface in $\P^{n-1}$. 
For instance, given a line bundle $\sL$ of degree $5$ on $C$, we get an embedding $C\to \P(H^0(C,\sL)^\vee) \cong \P^4$. The secant variety of this elliptic normal curve is a hypersurface of degree $5$ in $\P^4$ that admits symmetric $5\times 5$ determinantal representations. The case of degree $5$ involves some nice geometry and is more accessible. It serves as a guiding example throughout this section. 

Secant varieties of elliptic normal curves $C\subset \P^n$ have been extensively studied, e.g. in \cite{hulekthedogsout,tommyfisher}. We rely in particular on results from the work \cite{tommyfisher}.
\begin{facts}\label{facts:ellnormal}
  Fisher shows that the homogeneous vanishing ideal of $\sigma_j(C)$ is generated by $\beta(j+2,n+1)$ forms of degree $j+2$ whenever $\codim(\sigma_j(C))\geq 2$, where $\beta(r,m) = \binom{m-r}{r} + \binom{m-r-1}{r-1}$ (see \cite[Theorem~1.2]{tommyfisher}). When $\codim(\sigma_j(C)) = 1$, then $\sigma_j(C)$ is a hypersurface of degree $2j+3$, see \Cref{cor:deg} or \cite[Theorem~1.1]{tommyfisher}.
  Furthermore, it is a consequence of \cite[Theorem~1.4]{tommyfisher} that the singular locus of $\sigma_j(C)$ is $\sigma_{j-1}(C)$ (see also \cite[Proposition~8.15]{hulekthedogsout}).
\end{facts}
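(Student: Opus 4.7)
The three assertions are collected from the literature, so the plan is simply to indicate how each is established and how the degree claim can also be recovered from our own counts within this paper.

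For the ideal-generation statement, I would quote Fisher's Theorem 1.2 of \cite{tommyfisher} directly. Fisher determines the complete graded Betti table of $\sigma_j(C)$ by exploiting the $\mathrm{SL}_2$-equivariant structure coming from the theta-group action on $H^0(C,\sL)$; the number of minimal generators in the relevant degree is then read off from that table as $\beta(j+2,n+1)$.

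For the degree statement in the hypersurface case, I would verify the formula $2j+3$ in two independent ways. The first is Fisher's Theorem 1.1. The second stays within the present paper: apply \Cref{cor:deg} with $g=1$ (so that the hypothesis $g\leq k+1$ holds automatically) and $k=j$; the formula $(2k+4-g)\cdot 2^{g-1}$ evaluates to $(2j+3)\cdot 1=2j+3$. One only needs to note in passing that an elliptic normal curve is projectively normal, which is standard for line bundles of degree $\geq 2g+1=3$.

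For the singular locus, the inclusion $\sigma_{j-1}(C)\subset\mathrm{Sing}(\sigma_j(C))$ is a standard consequence of Terracini's Lemma: a point on $\sigma_{j-1}(C)$ admits a positive-dimensional family of expressions as a $(j+1)$-secant, which forces the fiber of the addition map $C^{(j+1)}\to\sigma_j(C)$ over such a point to be positive-dimensional, and hence forces $\sigma_j(C)$ to be singular there (since $C^{(j+1)}$ is smooth and the map is generically an isomorphism onto its image). The reverse inclusion, together with the explicit structure of a resolution of singularities, is the content of Fisher's Theorem 1.4 of \cite{tommyfisher} and also of Proposition 8.15 of \cite{hulekthedogsout}. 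The genuine work sits inside Fisher's theorems; reproducing the details would require equivariant homological algebra with the Heisenberg group, which is presumably why these results are collected as a Facts block rather than restated as a Theorem.
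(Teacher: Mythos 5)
Your proposal matches the paper's treatment: this is a Facts block, and the paper offers no argument beyond the citations to \cite[Theorems~1.1, 1.2, 1.4]{tommyfisher} and \cite[Proposition~8.15]{hulekthedogsout}, plus the alternative degree count via \Cref{cor:deg}, exactly as you do (your specialization $g=1$, $k=j$ giving $(2j+3)\cdot 2^{0}=2j+3$ is the intended check). One caution about your extra sketch of the inclusion $\sigma_{j-1}(C)\subset\mathrm{Sing}(\sigma_j(C))$: there is no addition map $C^{(j+1)}\to\sigma_j(C)$ (the dimensions are $j+1$ versus $2j+1$; the relevant object is the incidence variety of points on $j$-secant planes over $C^{(j+1)}$), and in any case a positive-dimensional fiber of a birational morphism from a smooth variety does not force the image point to be singular (a blow-up of a smooth point is a counterexample), so that step as written does not prove the inclusion; since both inclusions are contained in the cited results, this redundant aside creates no gap, but it should not be presented as a proof.
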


We follow a well-established general strategy for constructing a symmetric determinantal representation for plane curves, which is usually referred to as Dixon's method and goes back to Hesse's paper \cite{HesseMR1579060}, see \cite{CAG, hvelemt}. The construction roughly goes as follows:

Let $f\in\R[x_0,x_1,\ldots,x_n]$ be a homogeneous polynomial of degree $d$ and suppose that $f = \det(A(x))$, where $A(x)$ is a symmetric $d\times d$ matrix whose entries are real linear forms. The construction goes after the adjugate matrix $\adj(A)$, which is a matrix that has rank $1$ at a general point of the hypersurface $\sV(f)$ because $A\cdot \adj(A) = \det(A) I_d = f(x) I_d$. The first row of this matrix $\adj(A)$ depends on the choice of a \emph{contact interlacer}, which is characterized for us in \Cref{lem:contactint} below. By symmetry, this also determines the first column of the matrix. Since the matrix is supposed to have rank $1$ modulo $f$, we can use the vanishing of the $2\times 2$-minors to complete this matrix and thereby determine $A$. 

Admitting a contact hypersurface is the central property used in classical algebraic geometry to construct symmetric determinantal representations. The real interlacing property implies the definiteness of the determinantal representation. 

We are not quite able to apply this construction directly to the secant varieties $\sigma_j(C)$ but rather take a detour via the symmetric product $C^{(j+1)}$, which is the product $C^{j+1}$ modulo the action of the symmetric group $S_{j+1}$ by permutation of the points. So our first step is devoted to some intersection theory on symmetric products of elliptic curves.

\subsection{Symmetric products}
Let $C$ be a smooth complete curve of genus $1$. To understand the Chow ring of $C^{(n)}$, we identify the $n$-fold symmetric product $C^{(n)}$ with the set of degree $n$ divisors on $C$ and consider the map $u: C^{(n)}\to\textrm{Pic}^n( C)$ that sends a divisor $D$ to its divisor class $[D]$. This is a projective space bundle (see e.g.~\cite[D.5.2]{3264}): The fiber over $[D]$ is $\pp(H^0(C,\sL(D))\cong \P^{n-1}$. Hence the Chow ring $A(C^{(n)})$ is a free $A(\textrm{Pic}^n( C))$-module of rank $n$ via $u^*$, see \cite[Section~9.3]{3264}. For any $P\in C$ we denote by $x_P\in A^1(C^{(n)})$ the class of the set $X_P=\{D\in C^{(n)}:\, D-P\geq0\}$. Let $D=P_1+\ldots+P_n$ for some $P_i\in C$. Then $$x_{P_1}\cdots x_{P_n}=\{D\}=x_{P_1}\cdots x_{P_{n-1}}\cdot u^*[D]$$ and in particular $x_{P}^n-x_{P}^{n-1}\cdot u^*[n P]=0$ for any $P\in C$. In fact, the class $x_P$ generates $A(C^{(n)})$ as a $A(\textrm{Pic}^n( C))$-algebra and its minimal polynomial is given by $x_{P}^n-x_{P}^{n-1}\cdot u^*[n P]=0$. All classes of the form $x_P$ are numerically equivalent and we denote their numerical equivalence class by $x$. Furthermore, we denote by $\theta$ the numerical equivalence class of $u^*[n P]$ which is just the fiber of $u$ over a point. Thus the ring of cycles on $C^{(n)}$ modulo numerical equivalence is given by $$\Z[\theta,x]/(\theta^2,x^n-\theta x^{n-1}).$$

\begin{ex}
 If $n=2$, then $X = C^{(2)}$ is a ruled surface as discussed in \cite[Chapter~V, Section 2]{Hart77}. We have $\theta=f$ (a fiber) and $x=C_0$ (a section) with the notation in \cite[Proposition~V.2.3]{Hart77}. The intersection product on this surface satisfies $\theta.x = f.C_0 = 1$, $\theta^2 = f^2 = 0$. This already determines the central surface invariants $p_a(X) = -1$, $p_g(X) = 0$, $q(X) = 1$ in \cite[Corollary~V.2.5]{Hart77}.

 It can be checked that we have $x^2 = C_0^2 = 1$ on this surface (which is easy with the description of the Chow ring above) so that $e = 1$ and the canonical class of $X$ is numerically equivalent to $-2C_0 + f$ by \cite[Corollary~V.2.11]{Hart77}.
\end{ex}

To determine the canonical class of $C^{(n)}$ for larger $n$, we consult \cite{ACGHI}.
The Chern polynomial of the tangent sheaf $\cT$ of $C^{(n)}$ is given by (\cite[Chapter~VII, (5.4)]{ACGHI}) 
\begin{align*}
  c_t(\cT)& =(1+tx)^n\cdot e^{-\frac{t\theta}{1+tx}} = (1+tx)^n \left(1-\frac{t\theta}{1+tx}\right) 
  \\
  & =(1+tx)^{n-1}\cdot(1+tx-t\theta).
\end{align*}
In particular, the numerical equivalence class of the canonical divisor is $$-c_1(\cT)=-nx+\theta.$$

We now use this to compute several dimensions needed below.
\begin{lem}\label{lem:amplenum}
  Any line bundle whose numerical equivalence class is $a\theta +b x$ with $a\geq0$ and $b>0$ is ample.
\end{lem}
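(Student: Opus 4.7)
The plan is to decompose $a\theta+bx$ into an ample part and a nef part and then apply the fact that the sum of an ample class and a nef class is ample. Nef-ness of $\theta$ is immediate: $[nP]$ is a positive-degree, hence ample, divisor class on the elliptic curve $\pic^n(C)$, and $\theta = u^*[nP]$ is its pullback along the proper morphism $u\colon C^{(n)}\to\pic^n(C)$, so $\theta$ is nef.

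The main step, and the main obstacle, is to show that $x$ itself is ample. I would pull back via the finite surjective quotient map $\sigma_n\colon C^n\to C^{(n)}$ of degree $n!$. Writing $\pi_i\colon C^n\to C$ for the $i$-th projection, a local computation (checking in particular at the points of the diagonal, where $\sigma_n$ is ramified) shows that
\[
  \sigma_n^* X_P \;=\; \sum_{i=1}^{n}\pi_i^*P
\]
as divisors, so $\sigma_n^*\cO_{C^{(n)}}(X_P) \cong \bigotimes_{i=1}^{n}\pi_i^*\cO_C(P)$. Since $\cO_C(P)$ has positive degree on the elliptic curve $C$, it is ample; a sufficiently high power of the displayed line bundle defines a closed embedding of $C^n$ into projective space, obtained by composing very ample embeddings of each factor with a Segre embedding, so it is ample on $C^n$. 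A finite surjective morphism descends ampleness, whence $\cO_{C^{(n)}}(X_P)$ is ample on $C^{(n)}$, i.e., $x$ is ample.

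With $x$ ample and $\theta$ nef, the conclusion follows from the Nakai--Moishezon criterion applied to $a\theta+bx = bx + a\theta$: for any positive-dimensional subvariety $V\subset C^{(n)}$ of dimension $d$, the intersection number $(bx+a\theta)^d\cdot V$ expands (using $\theta^2=0$) as $b^d(x^d\cdot V) + dab^{d-1}(\theta x^{d-1}\cdot V)$, where the first summand is strictly positive by ampleness of $x$ and $b>0$, while the second is a nonnegative intersection of nef classes. Hence $a\theta+bx$ is ample.
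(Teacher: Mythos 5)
Your argument is correct, and it follows the same decomposition as the paper -- write the class as (positive multiple of the ample class $x$) plus (nonnegative multiple of the nef class $\theta$) -- but it replaces the paper's citations by self-contained proofs. The paper simply quotes \cite[p.~310]{ACGHI} for the ampleness of $x$, notes that $\theta=u^*[nP]$ is nef as a pullback of a nef class, and then invokes the general fact that the sum of an ample and a nef class is ample \cite[Cor.~1.4.10]{nefl}. You instead prove the ampleness of $x$ via the quotient $\sigma_n\colon C^n\to C^{(n)}$: the identity $\sigma_n^*X_P=\sum_i\pi_i^*P$ is indeed multiplicity-free (locally $X_P$ is cut out by the last elementary symmetric function $e_k$ of coordinates at $P$, whose pullback is $t_1\cdots t_k$), the box product $\bigotimes_i\pi_i^*\cO_C(P)$ is ample on $C^n$, and descent of ampleness along a finite surjective morphism of projective varieties is standard (e.g.\ \cite[Cor.~1.2.28]{nefl} or \cite[Ex.~III.5.7]{Hart77}); and in place of the general ``ample plus nef is ample'' statement you run Nakai--Moishezon by hand, where the relation $\theta^2=0$ makes the expansion of $(a\theta+bx)^d\cdot V$ collapse to an ample term plus a nonnegative nef term. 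What the paper's route buys is brevity; what yours buys is a proof that needs no external input beyond Nakai--Moishezon and Kleiman-type nonnegativity of nef intersections, and it incidentally records the useful fact $\sigma_n^*\cO_{C^{(n)}}(X_P)\cong\boxtimes_i\,\cO_C(P)$, which also explains \emph{why} $x$ is ample rather than only that it is.
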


\begin{proof}
  By \cite[p.~310]{ACGHI}, $x$ is ample. Further $\theta$ is nef as the pullback of a nef divisor \cite[Exp.~1.4.4]{nefl}. Now the claim follows from \cite[Cor.~1.4.10]{nefl}
\end{proof}

\begin{cor}\label{cor:kodaira}
  Any line bundle $\sL$ on $C^{(n)}$ whose numerical equivalence class is $a\theta +b x$ with $a\geq1$ and $b>-n$ satisfies $h^i(\sL)=0$ for all $i>0$.
\end{cor}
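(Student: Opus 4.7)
The plan is to apply Kodaira vanishing after twisting by the inverse of the canonical bundle. The canonical class of $C^{(n)}$ was computed above to be numerically equivalent to $-nx + \theta$, so the numerical class of $\sL \otimes \omega_{C^{(n)}}^{-1}$ is
\[
  (a\theta + bx) - (-nx + \theta) = (a-1)\theta + (b+n)x.
\]
Under the hypotheses $a \geq 1$ and $b > -n$ we have $a - 1 \geq 0$ and $b + n > 0$, so by \Cref{lem:amplenum} this numerical class is ample. Since ampleness is a numerical property, the line bundle $\sL \otimes \omega_{C^{(n)}}^{-1}$ is itself ample.

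Now $C^{(n)}$ is a smooth projective variety (over $\R$, and hence over $\C$ after base change, without affecting the cohomology groups $h^i$), so the Kodaira vanishing theorem applies: for the ample line bundle $\sM := \sL \otimes \omega_{C^{(n)}}^{-1}$ we have $H^i(C^{(n)}, \omega_{C^{(n)}} \otimes \sM) = 0$ for all $i > 0$. But $\omega_{C^{(n)}} \otimes \sM = \sL$, which gives the desired vanishing.

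The only step requiring care is the passage from numerical equivalence to actual ampleness; this is fine because by Kleiman's criterion (or more elementarily, because ampleness depends only on the class in the real Néron--Severi group) any line bundle numerically equivalent to an ample one is itself ample. After that, the argument is a direct invocation of Kodaira vanishing, so there is no serious obstacle.
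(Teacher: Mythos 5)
Your proof is correct and follows essentially the same route as the paper: compute the numerical class of $\sL\otimes\Omega^{-1}$, invoke \Cref{lem:amplenum} to get ampleness, and conclude by Kodaira vanishing. The extra care you take about ampleness being a numerical property is already built into the statement of \Cref{lem:amplenum} (which speaks of any line bundle in the given numerical class), so nothing further is needed.
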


\begin{proof}
  We have seen that the numerical equivalence class of the canonical bundle $\Omega$ on $C^{(n)}$ is $-nx+\theta$. Therefore, by \Cref{lem:amplenum}, the line bundle $\sL\otimes\Omega^{-1}$ is ample. Thus the claim follows from the Kodaira Vanishing Theorem \cite[Rem.~7.15]{Hart77}.
\end{proof}

Since we have factored the Chern polynomial, the Todd class of the tangent sheaf $\cT$ on $C^{(n)}$ is (\cite[Appendix~A, Section~4]{Hart77})
\[
  \td(\cT)=\left(\frac{x}{1-e^{-x}}\right)^{n-1}\cdot \frac{x-\theta}{1-e^{\theta-x}}.
\]

\begin{prop}\label{prop:eulercomp}
  The Euler characteristic of a line bundle $\sL$ on $C^{(n)}$ that is numerically equivalent to $a\theta + bx$ with $b\geq 0$ is 
  \[
    \chi(\sL) = \binom{b+n}{n} + (a-1)\binom{b+n-1}{n-1}.
  \]
\end{prop}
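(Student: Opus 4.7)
The approach is Hirzebruch--Riemann--Roch on $C^{(n)}$, using the Chern data recorded just above. Since the Euler characteristic of a line bundle depends only on the numerical class of $c_1$, we may compute with any representative of the stated class. The whole calculation is governed by the two nilpotence relations $\theta^2 = 0$ and $x^n = \theta x^{n-1}$: against the fundamental class of $C^{(n)}$ only the coefficients of $x^n$ and $\theta x^{n-1}$ survive, and both integrate to $1$ (the former via the relation just quoted).

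Writing $\operatorname{ch}(\sL) = (1 + a\theta)\, e^{bx}$ and setting $\phi(y) := y/(1-e^{-y})$, the linearization of the factor $(x-\theta)/(1-e^{\theta-x}) = \phi(x-\theta)$ modulo $\theta^2$ turns the Todd class into $\phi(x)^n - \phi(x)^{n-1}\phi'(x)\,\theta$. Multiplying out, using $(e^{bx}\phi(x)^n)' = b\,e^{bx}\phi(x)^n + n\,e^{bx}\phi(x)^{n-1}\phi'(x)$, the three contributions from $[x^n]$, $a\,[x^{n-1}]$ and $-[x^{n-1}]$ combine into the compact formula
\[
\chi(\sL) \;=\; \frac{an+b}{n}\,[x^{n-1}]\, e^{bx}\phi(x)^n.
\]

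The core step is to identify this coefficient with $\binom{b+n-1}{n-1}$. I would write it as a residue $\frac{1}{2\pi i}\oint e^{bx}(1-e^{-x})^{-n}\,dx$ around $0$ and apply the change of variables $t = 1-e^{-x}$, so that $e^{bx} = (1-t)^{-b}$ and $dx = dt/(1-t)$. The residue transforms into
\[
\frac{1}{2\pi i}\oint t^{-n}(1-t)^{-b-1}\,dt \;=\; [t^{n-1}](1-t)^{-b-1} \;=\; \binom{b+n-1}{n-1}.
\]

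The elementary identity $\binom{b+n}{n} = \frac{b+n}{n}\binom{b+n-1}{n-1}$ then converts $\frac{an+b}{n}\binom{b+n-1}{n-1}$ into $\binom{b+n}{n} + (a-1)\binom{b+n-1}{n-1}$, as claimed. I expect the main obstacle to be the coefficient extraction in the third paragraph: the substitution $t = 1-e^{-x}$ is the key trick, without which one is stuck with an unwieldy direct expansion of $\phi(x)^n$. The hypothesis $b \geq 0$ is used only to give the standard elementary meaning to the right-hand binomial; the computation itself is a polynomial identity in $a,b$.
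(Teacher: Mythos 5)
Your argument is correct and follows essentially the same route as the paper: Hirzebruch--Riemann--Roch on $C^{(n)}$ with the Todd class linearized modulo $\theta^2$, and the same cancellation of the mixed term via the identity $(e^{bx}\phi(x)^n)' = b\,e^{bx}\phi(x)^n + n\,e^{bx}\phi(x)^{n-1}\phi'(x)$, leading to $\chi(\sL) = \tfrac{an+b}{n}[x^{n-1}]e^{bx}\phi(x)^n$. The only point of divergence is the evaluation of that coefficient: the paper reads it off as $\chi(\sO_{\P^{n-1}}(b)) = \binom{b+n-1}{n-1}$ by running Hirzebruch--Riemann--Roch on $\P^{n-1}$ backwards (exploiting that $x$ satisfies essentially the relation of a hyperplane class), whereas you compute it self-containedly as a residue via the substitution $t = 1-e^{-x}$; both evaluations are valid and give the same binomial, so your proof goes through.
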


\begin{proof}
  Since $\theta^2 = 0$ in $A(C^{(n)})$, the exponential Chern character of $\sL$ is numerically equivalent to $(1+a\theta)e^{bx}$. So by the Hirzebruch--Riemann--Roch Theorem \cite[Theorem~A.4.1]{Hart77}, we have
  \[
    \chi(\sL)=\deg\left((1+a\theta)\cdot e^{bx}\cdot \left(\frac{x}{1-e^{-x}}\right)^{n-1}\cdot \frac{x-\theta}{1-e^{\theta-x}}\right)_n.
  \]
  We first simplify the term $\frac{x-\theta}{1-e^{\theta-x}}$, again using the fact that $\theta^2=0$ so that
  \begin{align*}
    \frac{x-\theta}{1-e^{\theta-x}} & = \sum_{m=0}^n \frac{B_m}{m!}(x-\theta)^m = \sum_{m=0}^n \frac{B_m}{m!}(x^m - m\theta x^{m-1}) \\
    & =  \frac{x}{1-e^{-x}} - \theta \sum_{m=1}^n \frac{B_m}{(m-1)!}x^{m-1},
  \end{align*}
  where $B_m$ denotes the appropriate Bernoulli number. By this computation, we have to compute three terms (again, because $\theta^2 = 0$)
  \begin{align}\label{eq:threeterms}
    \chi(\sL) = &\deg\left(e^{bx}\left(\frac{x}{1-e^{-x}}\right)^n\right)_n - \nonumber \\
    & \deg\left(\theta e^{bx}\left(\frac{x}{1-e^{-x}}\right)^{n-1}\sum_{m=0}^{n-1} \frac{B_{m+1}}{(m)!} x^m \right)_n + \\
    & a \deg\left(\theta e^{bx}\left(\frac{x}{1-e^{-x}}\right)^n\right)_n. \nonumber
  \end{align}
  We can compute this degree using the Todd class of the tangent bundle on $\P^{n-1}$, which is
  \[
    \td(\cT_{\P^{n-1}}) = \left( \frac{x}{1-e^{-x}} \right)^{n} \in \Z[x]/(x^n) \otimes \Q,
  \]
  because our variable $x$ satisfies almost the same relation.
  More precisely, the important property is that the ideal $(\theta^2,x^n-\theta x^{n-1})$ is homogeneous.  
  The Hirzebruch--Riemann--Roch Theorem on $\P^m$ implies that
  \[
    \deg\left(e^{dx}\cdot \left( \frac{x}{1-e^{-x}} \right)^{m+1}\right)_m = \chi(\sO_{\P^m}(d)) = \binom{d+m}{m}.
  \]
  The third term $\deg(\theta e^{bx} (x/(1-e^{-x}))^n)_n$ in the above \Cref{eq:threeterms} is the coefficient of $\theta x^{n-1}$, which is given by the coefficient of $x^{n-1}$ in $e^{bx} (x/(1-e^{-x}))$ and by the Hirzebruch--Riemann--Roch formula of $\P^{n-1}$ is
  \[
    \deg\left( e^{bx}\cdot \left(\frac{x}{1-e^{-x}}\right)^{n}\right)_{n-1} = \chi(\sO_{\P^{n-1}}(b)) = \binom{b+n-1}{n-1}.
  \]
  To compute the first two terms in \Cref{eq:threeterms}, we use the following identity
  \[
    \frac{\rm d}{{\rm d} x} \left(e^{bx}\left(\frac{x}{1-e^{-x}}\right)^n\right) = b e^{bx} \left(\frac{x}{1-e^{-x}}\right)^n + n e^{bx} \left(\frac{x}{1-e^{-x}}\right)^{n-1} \sum_{m=1}^n \frac{B_m}{(m-1)!} x^{m-1},
  \]
  which shows that the first term in \eqref{eq:threeterms} is up to a factor of $1/n$ given by the coefficient of $x^{n-1}$ on the right hand side. This we can almost compute using Hirzebruch--Riemann--Roch on $\P^{n-1}$ again, namely
  \begin{align*}
    & \deg\left(b e^{bx} \left(\frac{x}{1-e^{-x}}\right)^n + n e^{bx} \left(\frac{x}{1-e^{-x}}\right)^{n-1} \sum_{m=1}^n \frac{B_m}{(m-1)!} x^{m-1} \right)_{n-1} =\\
    & b \binom{b+n-1}{n-1} + n \deg \left( e^{bx} \left(\frac{x}{1-e^{-x}}\right)^{n-1} \sum_{m=1}^n \frac{B_m}{(m-1)!} x^{m-1} \right)_{n-1}.
  \end{align*}
  This second summand here cancels with the second term in \Cref{eq:threeterms}. So in total, we get
  \[
    \chi(\sL) = \left(\frac{b}{n} + a\right) \binom{b+n-1}{n-1} = \binom{b+n}{n} + (a-1)\binom{b+n-1}{n-1}. \qedhere
  \]
\end{proof}

\begin{cor}\label{cor:eulercompo}
  Let $\sL$ be a line bundle on $C^{(n)}$ that is numerically equivalent to $a\theta + bx$ with $a\geq1$ and $b\geq 0$. Then 
  \[
    h^0(C^{(n)},\sL)= \binom{b+n}{n} + (a-1)\binom{b+n-1}{n-1}.
  \]
\end{cor}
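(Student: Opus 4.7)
The plan is to derive this Corollary as a direct combination of the two previous results, namely the Kodaira-type vanishing statement of \Cref{cor:kodaira} and the Euler characteristic computation of \Cref{prop:eulercomp}. Since the hypotheses $a\geq1$ and $b\geq 0$ imply in particular $a\geq 1$ and $b>-n$, I would first invoke \Cref{cor:kodaira} to conclude that $h^i(C^{(n)},\sL)=0$ for every $i>0$.

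With all higher cohomology vanishing, I would then use the definition of the Euler characteristic
\[
  \chi(\sL)=\sum_{i\geq0}(-1)^i h^i(C^{(n)},\sL)
\]
to conclude $h^0(C^{(n)},\sL)=\chi(\sL)$. The desired formula is then obtained by applying \Cref{prop:eulercomp}, whose hypothesis $b\geq 0$ matches the assumption here.

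Since everything is assembled from previously established results, there is essentially no obstacle; the only thing worth double checking is that the numerical equivalence hypothesis is all that is needed. This is fine because the Euler characteristic depends only on the numerical equivalence class (as already used in the proof of \Cref{prop:eulercomp} via Hirzebruch--Riemann--Roch), and the vanishing argument in \Cref{cor:kodaira} relies on the ampleness of $\sL\otimes\Omega^{-1}$, which likewise only depends on the numerical class by Kleiman's criterion (or directly from \Cref{lem:amplenum}). Thus the statement follows immediately.
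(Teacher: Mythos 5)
Your proposal is correct and matches the paper's own proof exactly: apply \Cref{cor:kodaira} to get $h^0(C^{(n)},\sL)=\chi(\sL)$ (the hypotheses $a\geq 1$, $b\geq 0$ indeed imply $b>-n$), then conclude with \Cref{prop:eulercomp}. Your extra remark about numerical equivalence being sufficient is a harmless but unnecessary elaboration.
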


\begin{proof}
 By \Cref{cor:kodaira} we have $h^0(C^{(n)},\sL)=\chi(\sL)$. Now the claim follows from \Cref{prop:eulercomp}.
\end{proof}

\subsection{The secant varieties of elliptic normal curves}
Let $C\subset \P^n$ be a curve of genus $1$ embedded by a complete linear system $|D|$ with $n\geq 4$. To relate the secant variety $\sigma_j(C)\subset \P^n$ (together with its embedding) to the symmetric product $C^{(j+1)}$, we consider the rational map $\psi\colon \sigma_j(C) \ratto \P^{N-1}$ defined by a set of generators of degree $j+1$ of the homogeneous vanishing ideal of $\sigma_{j-1}(C)$, see \Cref{facts:ellnormal}. The other tool are determinantal representations of elliptic normal curves as explained e.g.~in \cite[Section~6C]{Eis05}:
Let $E$ be a divisor on $C$ of degree at least $2$ and at most $\deg(D)/2$.
Consider the multiplication map
\[
  \phi_E \colon H^0(C,\sL(E)) \times H^0(C,\sL(D-E)) \to H^0(C,\sL(D)).
\]
The representing matrix of size $\deg(E)\times \deg(D-E)$ is $1$-generic and its $2\times 2$ minors cut out $C$ (see \cite[Proposition~6.10]{Eis05}). The maximal ($\deg(E)\times \deg(E)$) minors cut out the variety obtained as the union of the span of all the divisors of zeros of sections in $H^0(C,\sL(E))$ as a subscheme of $C\subset \P^n$ (\cite[Corollary~6.12]{Eis05}). This discussion is crucial for the proof of the following statement.

\begin{prop}\label{prop:ellnormalsymmprod}
  Let $C\subset \P^n$ be an elliptic normal curve and let $k$ be an integer such that $2k+1\leq n$. Let $N=\binom{n-k}{k+1}+\binom{n-k-1}{k}$ and $\psi\colon \sigma_k(C) \ratto \P^{N-1}$ be the rational map defined by the $(k+1)$-ics vanishing on $\sigma_{k-1}(C)$. This map factors through a map $C^{(k+1)}\to\P^{N-1}$. The fiber of a point $p$ in the image of $\psi$ is a $k$-dimensional subspace $|E|$ for a unique divisor $E$ on $C$ of degree $k+1$. The divisor class on $C^{(k+1)}$ determined by this map is numerically equivalent to $2\theta+(n-(2k+1)) x$. More precisely, it is given by $$u^*([E]+[E'])+\sum_{k=1}^{n-(2k+1)} x_{P_k}$$ where $E$ and $E'$ are divisors of degree $k+1$ on $C$ and $P_k\in C$ such that $E+E'+\sum_{k=1}^{n-(2k+1)} {P_k}$ is linearly equivalent to $D$.
\end{prop}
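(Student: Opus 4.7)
The plan is to prove the proposition in three stages, mirroring the three assertions in the statement.

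\smallskip
\textbf{Stage 1 (Factorization).} Fix a general divisor $E=P_0+\cdots+P_k\in C^{(k+1)}$ whose support consists of $k+1$ distinct points. The $k+1$ hyperplanes $\langle E-P_i\rangle\subset\langle E\rangle\cong\P^k$ are $(k-1)$-secants of $C$, and therefore lie in $\sigma_{k-1}(C)$. Any $(k+1)$-ic $f\in H^0(\P^n,I_{\sigma_{k-1}(C)}(k+1))$ restricts to $\langle E\rangle$ as a degree-$(k+1)$ form vanishing on these $k+1$ hyperplanes in general linear position. The space of such forms on $\P^k$ is one-dimensional, spanned by the product $\prod_{i=0}^k z_i$ of the dual coordinates, so
\[
f\bigl|_{\langle E\rangle}=c_f(E)\cdot\prod_{i=0}^k z_i
\]
for a scalar $c_f(E)$ depending only on $E$ and $f$. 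Consequently $\psi$ is constant along each $\langle E\rangle$, yielding a factorization $\psi=\iota\circ\pi$ with $\pi\colon\sigma_k(C)\ratto C^{(k+1)}$ sending a general point $p$ to the unique divisor $E$ with $p\in\langle E\rangle$ and $\iota\colon C^{(k+1)}\to\P^{N-1}$.

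\smallskip
\textbf{Stage 2 (Fibers).} By \Cref{lem:generalsecant} a general point $p\in\sigma_k(C)$ lies on a unique $k$-secant $\langle E\rangle$ spanned by $k+1$ points of $C$, so $\pi^{-1}(E)=\langle E\rangle$ is a $k$-plane. It remains to show that $\iota$ is generically injective, so that the fiber of $\psi$ over $\iota(E)$ is exactly $\langle E\rangle$. For distinct general $E_1,E_2\in C^{(k+1)}$ the spans $\langle E_1\rangle$ and $\langle E_2\rangle$ are distinct $k$-planes, and the $(k+1)$-ics generating $I_{\sigma_{k-1}(C)}$ (by \cite[Thm.~1.2]{tommyfisher}) suffice to separate their images in $\P^{N-1}$; this reduces to producing, for any two such $E_1,E_2$, an $f\in V$ with $f|_{\langle E_1\rangle}\neq 0$ and $f|_{\langle E_2\rangle}=0$, which exists because the linear system is base-point free on a dense open of $C^{(k+1)}$.

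\smallskip
\textbf{Stage 3 (Divisor class).} Identify $\iota^*\sO_{\P^{N-1}}(1)$ with the line subbundle $\mathcal{M}\subset\Sym^{k+1}(\mathcal{F})$ of the secant bundle $\mathcal{F}$ whose fiber at a reduced $E=\sum P_i$ is $\C\cdot\prod z_i=\bigotimes_i\sL(D)|_{P_i}$. Instead of pursuing the class of $\mathcal{M}$ through Grothendieck--Riemann--Roch on $\Delta\subset C\times C^{(k+1)}$ (which would require the non-numerical ``Poincaré'' correction to $[\Delta]$), we exhibit an explicit effective divisor in $|\mathcal{M}|$. Fix a decomposition $D\sim E+E'+\sum_{i=1}^{n-(2k+1)}P_i$ with $\deg E=\deg E'=k+1$. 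Using Eisenbud's multiplication matrix $M_E$ of size $(k+1)\times(n-k)$ for $\phi_E\colon H^0(\sL(E))\otimes H^0(\sL(D-E))\to H^0(\sL(D))$, whose $(k+1)\times(k+1)$ minors vanish on $\sigma_{k-1}(C)$ and on $Y_{[E]}$ by \cite[Prop.~6.12]{Eis05}, pick a basis of $H^0(\sL(D-E))=H^0(\sL(E'+\sum P_i))$ adapted to the decomposition (so that specific basis elements have divisors containing $E'$ or individual $P_i$'s). The $(k+1)\times(k+1)$ minor $f$ associated to these adapted columns is a $(k+1)$-ic vanishing on $\sigma_{k-1}(C)$, on $Y_{[E]}$, on $Y_{[E']}$ (by the symmetric role of $E$ and $E'$ via $D-E'=E+\sum P_i$), and with a simple zero along the incidence cone $\bigcup_{\langle E_1\rangle\ni P_i}\langle E_1\rangle$ for each $i$. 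Pulling back through $\iota$, the corresponding section of $\mathcal{M}$ vanishes simply on $|E|$, on $|E'|$, and on each $X_{P_i}$, giving
\[
(\iota^* f)_0=u^*[E]+u^*[E']+\sum_{i=1}^{n-(2k+1)}x_{P_i},
\]
which has numerical class $2\theta+(n-(2k+1))x$ since $[u^*[E]]=[u^*[E']]=\theta$ and $[X_{P_i}]=x$.

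\smallskip
\textbf{Main obstacle.} The delicate step is Stage 3: selecting the columns of $M_E$ so that the resulting maximal minor has \emph{exactly} the claimed zero divisor with the correct multiplicities, not larger. This requires combining Eisenbud's description of the ideals of $Y_{[E]}$ with the symmetric product geometry of \S\ref{sec:elliptic} and the Hilbert-function calculations (\Cref{prop:eulercomp}, \Cref{cor:eulercompo}) to rule out excess components, and is where the elliptic structure of $C$ is essential.
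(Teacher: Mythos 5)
Your Stages 1 and 3 are in essence the paper's own route. The observation that a $(k+1)$-ic vanishing on $\sigma_{k-1}(C)$ restricts to a secant plane $\langle E\rangle$ as a scalar times the product of the coordinate linear forms is exactly how the paper obtains the factorization, except that the paper phrases it via $\widetilde q = q(x_0v_0+\cdots+x_kv_k)$: the surviving coefficient is then \emph{multilinear} in the $v_i$, which gives a map defined on all of $C^{k+1}$ (hence on $C^{(k+1)}$, including non-reduced divisors), whereas your argument only yields a rational map defined at general reduced divisors -- the statement, and its later use in \Cref{sec:elliptic}, need the map on the whole symmetric product. Likewise, your ``adapted maximal minor'' of the $(k+1)\times(n-k)$ matrix of $\phi_E$ is nothing but the pullback, under the projection away from $\langle P_1,\ldots,P_{n-2k-1}\rangle$, of the determinant of the square multiplication matrix for the projected elliptic normal curve in $\P^{2k+1}$; that is precisely the section the paper works with.

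The genuine gaps are in Stages 2 and 3. In Stage 2, ``the linear system is base-point free on a dense open'' does not produce an $f$ with $f|_{\langle E_1\rangle}\neq 0$ and $f|_{\langle E_2\rangle}=0$: base-point freeness rules out common zeros but does not separate points, so generic injectivity of $C^{(k+1)}\ratto\P^{N-1}$ is simply asserted. Worse, in the admissible boundary case $n=2k+1$ it is false: there the class is $2\theta$, the map contracts the fibers of $u$ and identifies $|E|$ with $|D-E|$, so the fiber of $\psi$ is not a single secant plane; any correct separation argument (e.g.\ using the determinants $\det\phi_F$ for varying divisor classes $[F]$ to kill $\langle E_2\rangle$ but not $\langle E_1\rangle$) must be supplied and restricted to $n\geq 2k+2$. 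In Stage 3, the actual content of the proposition -- that the pulled-back zero divisor equals $u^*([E]+[E'])+\sum_i x_{P_i}$, with no excess components and multiplicity one along each piece -- is exactly what you postpone in your ``main obstacle'' paragraph, so it is not proved; note also that for $n>2k+1$ the ``incidence cone'' $\bigcup_{P_i\in \operatorname{supp}}\langle E_1\rangle$ has codimension $n-2k\geq 2$ in $\P^n$, so ``a simple zero along it'' is only meaningful after pulling back to the divisor $X_{P_i}\subset C^{(k+1)}$, which is again a multiplicity claim needing proof. The paper closes precisely this gap by a short argument in the square case: the zero set of $\det\phi_E$ is the union of the spans of divisors in $|E|$, so if a $k$-secant spanned by $F$ lies in it, it meets a secant spanned by some $F'\sim E$, hence $F+F'$ lies in a hyperplane and $F\sim D-E$; the general case then follows by projecting from $\langle P_1,\ldots,P_{n-2k-1}\rangle$. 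You should either reproduce that argument for your adapted minor or pin down the class independently (say by intersecting with test curves in $C^{(k+1)}$); the Euler-characteristic computations of \Cref{prop:eulercomp} and \Cref{cor:eulercompo} do not by themselves exclude excess components.
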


\begin{proof}
 By \cite[Thm. 1.2]{tommyfisher}, the number $N$ is exactly the dimension of the space of $(k+1)$-ics vanishing on $\sigma_{k-1}(C)$. For every such $(k+1)$-ic $q$ and $v_0,\ldots,v_k\in \C^{n+1}$ with $[v_i]\in C$ we consider $\widetilde{q}=q(x_0\cdot v_0+\cdots+x_k\cdot v_k)$. Since $q$ vanishes on $\sigma_{k-1}(C)$, the polynomial $\widetilde{q}$ vanishes on every coordinate hyperplane in the span of $\{v_0,v_1,\ldots,v_k\}$ and hence on every coordinate subspace. Therefore, the only monomial that shows up in $\widetilde{q}$ is $x_0\cdots x_k$. Its coefficient is a polynomial that is multilinear in the $v_i$. Thus we obtain a morphism $\varphi:C^{k+1}\to\P^{N-1}$ with the property that $\varphi(p_0,\ldots,p_k)=\psi(x)$ for every $x\not\in\sigma_{k-1}(C)$ which lies in the $k$-secant spanned by $p_0,\ldots,p_k$. In particular, since $\varphi$ does not depend on the order of the arguments, we obtain our desired map $C^{(k+1)}\to\P^{N-1}$.
 
 In order to determine the corresponding divisor on $C^{(k+1)}$ (i.e.~the hyperplane section of $C^{(k+1)}$), we need to describe the $k$-secants in the zero set of a $(k+1)$-ic that vanishes on $\sigma_{k-1}(C)$. First we consider the case $2k+1= n$. Let $E$ be a divisor of degree $k+1$ on $C$. Then $\phi_E$ from the above discussion (\cite[Section~6C]{Eis95}) is represented by a square matrix of size $k+1$ and its determinant $q$ vanishes on $\sigma_{k-1}(C)$ since the matrix has rank one on $C$. The zero set of $q$ is the union of the span of all the effective divisors linearly equivalent to $E$. This corresponds to the divisor $u^*[E]$ on $C^{(k+1)}$. By symmetry, the polynomial $q$ also vanishes on the union of all the divisors linearly equivalent to $D-E$. We claim that $q$ vanishes on no more $k$-secants, which shows that the divisor on $C^{(k+1)}$ we are looking for is $u^*([E]+[D-E])$. To this end let $S$ be any $k$-secant on which $q$ vanishes, spanned by a divisor $F$ not linearly equivalent to $E$. Then there is a $k$-secant $S'$ spanned by an effective divisor $F'$ linearly equivalent to $E$ that intersects $S$. Thus the span of $F+F'$ is a hyperplane which implies that $F$ is linearly equivalent to $D-E$.
 
 For the case $2k+1<n$ we pick points $P_1,\ldots,P_{n-(2k+1)}$ on $C$ and look at their span $W$. We project $C$ linearly to $\P^{2k+1}$ from the center $W$ so that the image is again an elliptic normal curve $\widetilde{C}$. Every $(k+1)$-ic vanishing on $\sigma_{k-1}(\widetilde{C})$ also vanishes on all $k$-secants of $C$ that contain one of the points $P_i$ as well as on the preimages of the $k$-secants of $\widetilde{C}$ it vanishes on and that have already been determined.
\end{proof}

\begin{rem}
  Again, the above proof is particularly nice from a geometric point of view for an elliptic normal curve $C\subset \P^4$ of degree $5$.
  In this case, to show that the image of the map $\psi$ is isomorphic to the symmetric product $C^{(2)}$, we can argue using Hirzebruch surfaces (rational normal scrolls), see \cite[Exercise~A2.22]{Eis95}. Choose any divisor class $E = P_1 + P_2$ of degree $2$. Then $h^0(\sO(E)) = 2$ and $h^0(\sO(-E)\otimes \sL) = 3$.
  The multiplication map 
  \[
    H^0(C,\sO(E))\times H^0(C,\sO(-E)\otimes \sL) \to H^0(C,\sL)
  \]
  shows that $C$ is contained in a rational normal scroll of type $S_{2,1}\cong \mathscr{H}_1$ and the lines of the ruling of the scroll intersect the curve $C$ in divisors that are linearly equivalent to $E$. In other words, the ruling lines of the scroll are a $\pic^2(C)$-family of secants to $C$. So for every divisor class in $E\in \pic^2(C)\cong C$, we get a rational normal scroll $S_E$ containing $C$. Additionally, $S_E\cap S_{E'} = C$ if $E$ is not linearly equivalent to $E'$.
  The secant variety $\Sigma = \sigma_1(C)$ is therefore the union $\bigcup_{E\in\pic^2(C)}S_E$.

  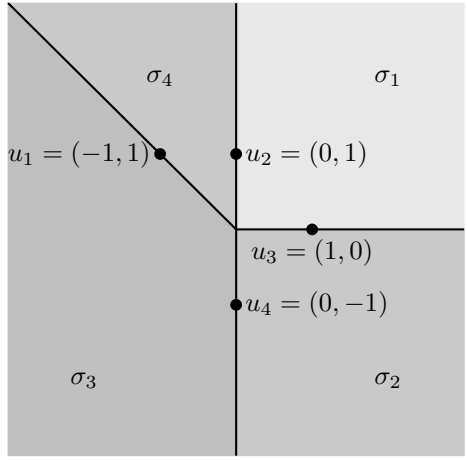
\begin{figure}
    \begin{center}
      \begin{tikzpicture}
        \filldraw[black!10!white!90] (0,0) -- (0,3) -- (3,3) -- (3,0) -- (0,0);
        \filldraw[black!30!white!70] (0,0) -- (3,0) -- (3,-3) -- (0,-3) -- (0,0);
        \filldraw[black!50!white!50] (0,0) -- (0,-3) -- (-3,-3) -- (-3,3) -- (0,0);
        \filldraw[black!70!white!30] (0,0) -- (0,3) -- (-3,3) -- (0,0);
        \draw[thick] (0,0) -- (0,3);
        \draw[thick] (0,0) -- (3,0);
        \draw[thick] (0,0) -- (0,-3);
        \draw[thick] (0,0) -- (-3,3);
        \filldraw (0,1) circle (2pt) node[right] {$u_2 = (0,1)$};
        \filldraw (1,0) circle (2pt) node[below] {$u_3 = (1,0)$};
        \filldraw (0,-1) circle (2pt) node[right] {$u_4 = (0,-1)$};
        \filldraw (-1,1) circle (2pt) node[left] {$u_1=(-1,1)$};
        \draw (2,2) node {$\sigma_1$};
        \draw (2,-2) node {$\sigma_2$};
        \draw (-2,-2) node {$\sigma_3$};
        \draw (-1,2) node {$\sigma_4$};
      \end{tikzpicture}
    \end{center}
    \caption{A picture of the rational polyhedral fan defining the Hirzebruch surface $\mathscr{H}_1$ as a toric variety.}
    \label{fig:hirzebruch}
  \end{figure}

  To study the rational map above, we first restrict it to the rational normal scrolls $S_E$ with $E\in\pic^2(C)$. These are embeddings of $\mathscr{H}_1$ into $\P^4$ via complete linear systems that are linearly equivalent to the torus invariant Weil divisor $H = D_3 + D_4$ (following the notation and conventions in \cite{CLSMR2810322} as shown in \Cref{fig:hirzebruch}). A line $F$ of the ruling of $S_E$ is linearly equivalent to $D_3$. So the class of the curve in $\pic(S_E)$ is $2H - F = D_3 + 2 D_4$, which is the anticanonical class of $S_E$.
  Here we again see by computing the intersection product $C.F = (2H-F).F = 2$, that $S_E$ is a union of secants of $C$.

  In $S_E$, the elliptic curve $C$ is cut out by two quadrics $q_0^E,q_1^E$ (whereas the ideal of $S_E$ in degree $2$ is a $3$-dimensional subspace of $\sI(C)_2$). The fibers of the rational map $Q_E\colon S_E\ratto \P^1$, $p\mapsto (q_0^E(p):q_1^E(p))$, are cut out by a quadric on $S_E$ (because $Q_E^{-1}(\lambda:\mu) = \{p\in S_E\colon \lambda q_1^E(p) -\mu q_0^E(p) = 0\}$). In $\pic(S_E)$, this is the class of $2H$ minus the class of the curve $C$, which leaves $F$, the class of a line of the ruling of $S_E$. Since the map $Q_E$ is constant along secants to $C$, the fiber of a point $(\lambda:\mu)$ is exactly one line of the ruling of $S_E$.
  Putting this together for every $E\in \pic^2(C)$, this shows that the fibers of the above rational map $\Sigma\ratto \P^4$ are exactly secants of $C$ so that the Zariski closure of the image is a surface $X\subset \P^4$.

  This construction also gives us a map from $C\times C$ to $X$, which takes equal values for $(P,Q)$ and $(Q,P)$. Hence it induces a morphism from $C^{(2)}$ to $X$. This is an isomorphism.
  In fact, $X$ is an embedding of $C^{(2)}$. The class of the corresponding line bundle on $X$ in $\pic(C^{(2)}) = \Z + \pi_P^*(\pic C)$ is $C_0 + \pi_P^*(A+B)$ for some $A,B\in C$. 
  This line bundle is very ample by \cite[Exercise~V.2.12]{Hart77}.
\end{rem}

\begin{lem}\label{lem:noquadratics}
  Let $C\subset \P^{2k+2}$ be an elliptic normal curve for some integer $k$. There are no quadratic relations among the generating set of $2k+3$ forms of degree $k+1$ of the vanishing ideal of $\sigma_{k-1}(C)$.
\end{lem}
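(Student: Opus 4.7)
My plan is to use \Cref{prop:ellnormalsymmprod} to interpret the generators of $I(\sigma_{k-1}(C))_{k+1}$ as sections of a specific line bundle on the symmetric product $C^{(k+1)}$, and then reduce the non-existence of quadratic relations to the injectivity of a natural multiplication map on $C^{(k+1)}$. Write $V:=I(\sigma_{k-1}(C))_{k+1}$ of dimension $2k+3$ with basis $f_0,\ldots,f_{2k+2}$. By \Cref{prop:ellnormalsymmprod}, the map $\psi\colon\sigma_k(C)\ratto\P^{2k+2}$ defined by the $f_i$ factors through a morphism $\phi\colon C^{(k+1)}\to\P^{2k+2}$ corresponding to a line bundle $\sL$ of numerical class $2\theta+x$. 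By \Cref{cor:deg} with $g=1$, $\sigma_k(C)$ is a hypersurface of degree $2k+3$, so $I(\sigma_k(C))_d=0$ for all $d<2k+3$.

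First I identify $V$ with $H^0(C^{(k+1)},\sL)$ via $f_i\mapsto s_i:=\phi^*(y_i)$: by \Cref{cor:eulercompo} applied with $(a,b,n)=(2,1,k+1)$, both spaces have dimension $2k+3$, and any relation $\sum a_is_i=0$ forces the linear form $\sum a_iy_i$ on the target to vanish on $\phi(C^{(k+1)})=\overline{\psi(\sigma_k(C))}$, whence $\sum a_if_i$ vanishes on $\sigma_k(C)$ and thus lies in $I(\sigma_k(C))_{k+1}=0$, so all $a_i=0$. Next, a quadratic relation $\sum a_{ij}f_if_j=0$ translates through $\psi^*$ to the statement that the form $q:=\sum a_{ij}y_iy_j$ on the target vanishes on $\phi(C^{(k+1)})$, which gives $\sum a_{ij}s_is_j=0$ in $H^0(\sL^{\otimes 2})$. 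Since \Cref{cor:eulercompo} applied to $\sL^{\otimes 2}$ (class $4\theta+2x$) yields $h^0(\sL^{\otimes 2})=\binom{k+3}{2}+3\binom{k+2}{2}=(k+2)(2k+3)=\dim\Sym^2H^0(\sL)$, the claim reduces to the injectivity (equivalently surjectivity) of the multiplication map
\begin{equation*}
 \mu\colon\Sym^2H^0(C^{(k+1)},\sL)\longrightarrow H^0(C^{(k+1)},\sL^{\otimes 2}).
\end{equation*}

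The main obstacle is proving this injectivity. My approach uses the projective bundle structure $u\colon C^{(k+1)}\to\pic^{k+1}(C)\cong C$. Writing $\sL=u^*\sO_C(2P)\otimes\sO_{C^{(k+1)}}(x)$ and observing that $\sO(x)$ restricts to $\sO_{\P^k}(1)$ on each fibre, the classical isomorphism $\Sym^2H^0(\P^k,\sO(1))\xrightarrow{\sim}H^0(\P^k,\sO(2))$ globalizes to a sheaf isomorphism $\Sym^2u_*\sL\xrightarrow{\sim}u_*\sL^{\otimes 2}$ on $C$. Combined with the vanishing $R^iu_*\sL^{\otimes m}=0$ for $i>0$ and $m=1,2$ (from fibrewise vanishing on $\P^k$) together with \Cref{cor:kodaira} (giving vanishing of higher cohomology of $\sL$ and $\sL^{\otimes 2}$ on $C^{(k+1)}$), the Leray spectral sequence reduces $\mu$ to the natural comparison map $\Sym^2H^0(C,u_*\sL)\to H^0(C,\Sym^2u_*\sL)$ for the rank-$(k+1)$ bundle $u_*\sL$ on the elliptic curve $C$, which has degree $2k+3$ and slope $(2k+3)/(k+1)>2$. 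Since $\gcd(k+1,2k+3)=1$, the bundle $u_*\sL$ should be Atiyah-stable, and I would complete the argument by a Butler-type normal-generation result for stable bundles of slope $>2$ on elliptic curves, or by a direct Koszul-complex computation exploiting the explicit geometric construction of $\sL$.
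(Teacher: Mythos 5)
Your reduction of the lemma to the injectivity (equivalently, by your dimension count via \Cref{cor:eulercompo}, surjectivity) of the multiplication map $\mu\colon\Sym^2 H^0(C^{(k+1)},\sL)\to H^0(C^{(k+1)},\sL^{\otimes 2})$ is sound: identifying the degree-$(k+1)$ generators with a basis of $H^0(C^{(k+1)},\sL)$ via \Cref{prop:ellnormalsymmprod}, and translating a quadratic relation into an element of $\ker\mu$, both work. This is a genuinely different route from the paper, which restricts to a generic plane section, uses that $\sigma_{k-1}(C)$ is arithmetically Gorenstein, the Buchsbaum--Eisenbud structure theorem for codimension-three Gorenstein ideals, and the Boffi--S\'anchez resolution of the square of a Pfaffian ideal. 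Be aware, though, of the direction of the logic: in the paper the completeness of the quadratic system on $C^{(k+1)}$ (the second half of \Cref{lem:symprodprojnorm}), which is exactly the surjectivity of your $\mu$, is deduced \emph{from} this lemma. So you must prove surjectivity of $\mu$ by independent means and cannot use anything downstream of the lemma.

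That is precisely where your argument stops being a proof. The pushforward formalism is fine ($\Sym^2 u_*\sL\cong u_*\sL^{\otimes 2}$, vanishing of higher direct images, Leray), but the two claims carrying all the weight are only asserted. First, stability of $W=u_*\sL$ does not follow from $\gcd(k+1,2k+3)=1$; coprimality only says that semistable implies stable. You would need to actually identify $W$, e.g.\ as a degree-two twist of Atiyah's indecomposable rank-$(k+1)$, degree-$1$ bundle, using that $u\colon C^{(k+1)}\to\pic^{k+1}(C)$ is the projectivization of that bundle (or a Fourier--Mukai argument). Second, ``a Butler-type normal-generation result \ldots or a direct Koszul-complex computation'' is a pointer to results outside the paper whose hypotheses (semistability plus a slope bound, which $2+\tfrac{1}{k+1}>2$ would satisfy once semistability is known) you have not verified, not an argument. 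Since, given your dimension counts, surjectivity of $\mu$ is equivalent to the lemma itself, leaving it at ``should be Atiyah-stable'' and ``I would complete the argument by\ldots'' means the essential content is missing. The facts you need are true, so the route can be completed, but as written there is a genuine gap at its crux.
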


\begin{proof}
  Let $f_0,\ldots,f_{2k+2}$ be a generating set of $2k+3$ forms of degree $k+1$ of the vanishing ideal of $\sigma_{k-1}(C)$. Let $S$ be the polynomial ring in three variables over $\C$. It suffices to show that the restrictions $g_i \in S$ of the $f_i$ to a generic two-plane do not satisfy any quadratic relations. Since $\sigma_{k-1}(C)$ is arithmetically Gorenstein by \cite[Cor.~8.14]{hulekthedogsout}, the ideal $I\subset S$ generated by the $g_i$ is Gorenstein as well (and zero dimensional). By the Buchsbaum--Eisenbud Theorem \cite{eisenbuchs}, see also \cite[Thm.~3.4.1]{Bruns93}, there exists a free $S$-module $F$ of rank $2k+3$ and an alternating homomorphism $\varphi:F\to F^*$ of degree $1$ such that $I$ is generated by the $(2k+2)\times (2k+2)$ Pfaffians of $\varphi$. Moreover, the minimal free resolution of $S/I$ is given by
  \[
    0\to S(-2k-3)\to F\cong S(-k-2)^{2k+3}\xrightarrow{\varphi} F^* \cong S(-k-1)^{2k+3}\to S.
  \]
  Now by \cite{powerofpfaff} the minimal free resolution of $S/I^2$ for such $I$ is of the form 
  \[
    0\to\wedge^{2k+1}F^* \to G \to \Sym^2(F^*)\cong S(-2k-2)^N\to S,
  \]
  where $N = \binom{2k+4}{2}$. Since this is a minimal free resolution, there are no linear relations among the pairwise products of Pfaffians generating $F^*$. In other words, there are no quadratic relations among the $g_i$.
\end{proof}

\begin{lem}\label{lem:symprodprojnorm}
  Let $C\subset \P^{n}$ be an elliptic normal curve and $2k+1\leq n$. Consider the map $C^{(k+1)}\to\P^{N-1}$ whose corresponding divisor $D$ we described in \Cref{prop:ellnormalsymmprod}. The linear system on $C^{(k+1)}$ that is given by all linear forms on $\P^{N-1}$ is complete. If $n=2k+2$, then the linear system on $C^{(k+1)}$ that is given by all quadratic forms on $\P^{N-1}$ is complete as well.
\end{lem}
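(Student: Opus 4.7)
The plan is to reduce each claim to two ingredients: a dimension count on $C^{(k+1)}$ via \Cref{cor:eulercompo}, and an injectivity statement for the pull-back of forms from $\P^{N-1}$. In both cases the injectivity will rest on the observation that the relevant graded piece of $I(\sigma_k(C))$ is trivial.

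For linear forms I would apply \Cref{cor:eulercompo} to the pull-back bundle $\sL$, which by \Cref{prop:ellnormalsymmprod} has numerical class $2\theta + (n-(2k+1))x$. With $a=2$ and $b=n-(2k+1)\geq 0$ this gives
\[
h^0(C^{(k+1)},\sL) = \binom{n-k}{k+1} + \binom{n-k-1}{k} = N,
\]
which matches $\dim H^0(\P^{N-1},\sO(1))$, so completeness reduces to injectivity of the pull-back map. If $\sum_i c_i f_i$ lies in its kernel, then it vanishes on the image of $C^{(k+1)}$ in $\P^{N-1}$. Since $\psi : \sigma_k(C)\dashrightarrow \P^{N-1}$ factors through this image (by \Cref{prop:ellnormalsymmprod}), the polynomial $\sum_i c_i f_i$ vanishes on a dense open subset of $\sigma_k(C)$ and hence on $\sigma_k(C)$ itself. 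By \Cref{facts:ellnormal}, $I(\sigma_k(C))$ is either zero (when $n=2k+1$) or generated in degree $k+2$; in either case $I(\sigma_k(C))_{k+1}=0$, so $\sum_i c_i f_i = 0$ as a polynomial, forcing $c_i=0$ by linear independence of the $f_i$.

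For $n=2k+2$ and quadratic forms, $\sL^{\otimes 2}$ has numerical class $4\theta + 2x$. A direct application of \Cref{cor:eulercompo} yields
\[
h^0(C^{(k+1)},\sL^{\otimes 2}) = \binom{k+3}{k+1} + 3\binom{k+2}{k} = (k+2)(2k+3) = \binom{N+1}{2},
\]
matching $\dim H^0(\P^{N-1},\sO(2))$, so once more it suffices to verify injectivity. For $Q = \sum_{i\leq j} c_{ij} f_i f_j$ in the kernel, the same factorisation argument shows that $Q$ vanishes on $\sigma_k(C)$; since $\sigma_k(C)\subset\P^{2k+2}$ is a hypersurface of degree $2k+3 > 2(k+1) = \deg Q$, we get $Q = 0$ as a polynomial, and \Cref{lem:noquadratics} then forces all $c_{ij}=0$. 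The main obstacle is really just verifying the binomial identities; the essential geometric content, namely that $I(\sigma_k(C))$ has trivial $(k+1)$-th graded piece and that its degree-$(k+2)$ generators admit no quadratic relations, is already packaged in \Cref{facts:ellnormal} and \Cref{lem:noquadratics}.
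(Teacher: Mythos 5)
Your argument is correct and essentially identical to the paper's: both prove completeness by computing $h^0$ of $D$ (resp.\ $2D$) via \Cref{cor:eulercompo} and matching it against the dimension of the space of linear (resp.\ quadratic) forms on $\P^{N-1}$, with injectivity of the restriction coming from the vanishing of $I(\sigma_k(C))$ in degrees $k+1$ and $2k+2$ together with \Cref{lem:noquadratics}. One small slip: when $n=2k+2$ the ideal of $\sigma_k(C)$ is principal, generated in degree $2k+3$ rather than $k+2$, but the fact you actually use, namely $I(\sigma_k(C))_{k+1}=0$, holds in every case, so the proof stands.
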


\begin{proof}
 The divisor $D$ is numerically equivalent to $2\theta+(n-2k-1)x$ by \Cref{prop:ellnormalsymmprod}. Thus by \Cref{cor:eulercompo} the associated complete linear system has dimension $\binom{n-k}{k+1} + \binom{n-k-1}{k}=N$. Because there are no $(k+1)$-ics vanishing on $\sigma_k(C)$, see \Cref{facts:ellnormal}, the linear system on $C^{(k+1)}$ that is given by all linear forms on $\P^{N-1}$ also has dimension $N$, and thus is complete.

 For the quadratic case with $n=2k+2$, we follow the same argument: Here, $\sigma_k(C)$ is a hypersurface of degree $2k+3$ and $N = 2k+3$, see \Cref{facts:ellnormal}. So there are no $(2k+2)$-ics vanishing on $\sigma_k(C)$ and by \Cref{lem:noquadratics}, the linear system on $C^{(k+1)}$ that is given by all quadratic forms on $\P^{N-1}$ has dimension
 \[
   \frac{1}{2}\cdot N\cdot(N+1)= (2k+3)\cdot(k+2).
 \]
 But by \Cref{cor:eulercompo} the complete linear system $|2D|$ also has dimension
 \[
   h^0(C^{(k+1)},4\theta + 2 x) = \binom{k+3}{k+1} + (4-1)\binom{k+2}{k} = (2k+3)\cdot(k+2).\qedhere
 \]
\end{proof}

\subsection{Constructing symmetric determinantal representations}
\begin{lem}\label{lem:Hilfslemma}
  Let $C$ be an irreducible smooth curve and let $\Delta\subset \C$ be the open unit disk. 
  Let $s_0\colon \Delta \to C^{(k+1)}$ be an analytic arc such that the support of $s_0(0)$ consists of $k+1$ distinct points, where we think of an element of $C^{(k+1)}$ as an effective divisor of degree $k+1$ on $C$. Furthermore, fix $D_0\in C^{(k)}$ such that $s_0(0) - D_0$ is effective. Then there is an $\epsilon >0$ and there are analytic arcs $s_1\colon \epsilon\Delta \to C^{(k)}$ and $s_2\colon \epsilon\Delta\to C$ such that for all $t\in \epsilon\Delta$ we have $s_0(t) = s_1(t) + s_2(t)$ and $s_1(0) = D_0$.
\end{lem}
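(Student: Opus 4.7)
The plan is to consider the analytic addition map
\[
  \mu\colon C^{(k)} \times C \longrightarrow C^{(k+1)}, \qquad (D,P) \longmapsto D + P,
\]
and to show that it is a local biholomorphism at the point $(D_0, P_0)$, where $P_0 := s_0(0) - D_0$ is the unique point satisfying $D_0 + P_0 = s_0(0)$. Once $\mu$ is inverted locally, the arcs $s_1$ and $s_2$ can be read off as the two components of $\mu^{-1} \circ s_0$ for $t$ sufficiently small.

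The key observation is that $P_0 \notin \supp(D_0)$: since $s_0(0)$ has $k+1$ pairwise distinct points in its support and $D_0$ is an effective sub-divisor of $s_0(0)$ of degree $k$, the support of $D_0$ consists of $k$ of those points, and $P_0$ is the remaining one. Label the support of $s_0(0)$ as $P_0, P_1, \ldots, P_k$ with $D_0 = P_1 + \cdots + P_k$, and choose pairwise disjoint analytic neighborhoods $U_i \subset C$ of $P_i$. Because the $U_i$ are disjoint, the sum maps $U_1 \times \cdots \times U_k \to C^{(k)}$ and $U_0 \times U_1 \times \cdots \times U_k \to C^{(k+1)}$ are analytic open embeddings onto neighborhoods $V \ni D_0$ and $W \ni s_0(0)$, respectively: on tuples whose entries lie in pairwise disjoint charts, the symmetric group action can only identify a tuple with itself. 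In these product charts the map $\mu\colon V \times U_0 \to W$ is just the identity on $U_0 \times U_1 \times \cdots \times U_k$, hence a biholomorphism between open neighborhoods of $(D_0, P_0)$ and of $s_0(0)$.

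By continuity of $s_0$, there exists $\epsilon > 0$ with $s_0(\epsilon \Delta) \subset W$; define
\[
  (s_1(t), s_2(t)) := \mu^{-1}(s_0(t)) \in V \times U_0 \subset C^{(k)} \times C.
\]
Both components are then analytic arcs, $s_1(0) = D_0$, $s_2(0) = P_0$, and $s_0(t) = s_1(t) + s_2(t)$ by construction. I do not expect a serious obstacle here; the only point requiring some care is the chart description of the symmetric product $C^{(n)}$ near a reduced divisor, which is classical for smooth complex curves.
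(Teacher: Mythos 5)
Your proposal is correct and follows the same route as the paper: both consider the addition map $C^{(k)}\times C\to C^{(k+1)}$, note that it is unramified (a local isomorphism) at the preimage $(D_0,P_0)$ of $s_0(0)$ because the support of $s_0(0)$ is reduced, and obtain $s_1,s_2$ by locally inverting this map along $s_0$. The only difference is that you spell out the product charts of the symmetric product near a reduced divisor, where the paper simply invokes the implicit function theorem.
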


\begin{proof}
  Consider the map $C^{(k)}\times C \to C^{(k+1)}$, $(E,Q)\mapsto E+Q$. This map is unramified over $s_0(0)$ and therefore locally an unramified cover. So the claim follows by the implicit function theorem.
\end{proof}

\begin{lem}\label{lem:ellnormaltangentcone}
  Let $C\subset \P^{2k+2}$ be an elliptic normal $M$-curve.
  \begin{enumerate}
    \item The hypersurface $X = \sigma_k(C)$ is hyperbolic with respect to a suitable point $e\in \P^{2k+2}(\R)$.
    \item The degree of $X$ is $2k+3$.
    \item The hypersurface $X$ has multiplicity $3$ at every point that lies on a $(k-1)$-secant of $C$ but not on a $(k-2)$-secant.
  \end{enumerate}
\end{lem}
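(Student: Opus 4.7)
Parts (1) and (2) follow from earlier results. For (1), the elliptic $M$-curve $C \subset \P^{2k+2}$ is embedded by a complete linear system of odd degree $2k+3$, which is \placeholdertwo\ by \Cref{ex:elliptic1}; since $n = 2k+2 \geq 2k+1$, \Cref{cor:tollhyp} yields hyperbolicity of $\sigma_k(C)$, and since its dimension equals the expected $2k+1$, it is a hypersurface. For (2), \Cref{cor:deg} with $g = 1$ gives $(2k+4-g)\cdot 2^{g-1} = 2k+3$.

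For (3), fix $p$ on the $(k-1)$-secant $\Lambda = \langle Q_1, \ldots, Q_k\rangle$ with $p \notin \sigma_{k-2}(C)$. The key step is to show that every $k$-secant $\Lambda_D$ of $C$ through $p$ contains $\Lambda$, so that $D = Q_1 + \ldots + Q_k + Q$ for some $Q \in C$. Suppose otherwise: let $\Lambda_D = \langle P_0, \ldots, P_k\rangle$ with $\Lambda \not\subset \Lambda_D$, and let $s$ be the number of coincidences $P_j = Q_i$; necessarily $s \leq k-1$. For $k \geq 2$ we have $p \notin C$ (since $C = \sigma_0(C) \subset \sigma_{k-2}(C)$), so $p$ is distinct from all $Q_i$ and $P_j$, and the assumption $p \notin \sigma_{k-2}(C)$ ensures $p$ does not lie in the span of the $s$ shared points. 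Thus $\Lambda \cap \Lambda_D$ contains $p$ together with the $s$ shared points in general linear position, giving $\dim(\Lambda \cap \Lambda_D) \geq s$ and hence $\dim(\Lambda + \Lambda_D) \leq 2k-1-s$. On the other hand, by Riemann--Roch on the elliptic curve, the $2k+1-s$ distinct points of $C$ inside $\Lambda + \Lambda_D$ span a linear subspace of dimension $2k-s$ (the bundle $L(-D')$ has degree $2+s$ with $h^0 = 2+s$), which contradicts $\dim(\Lambda + \Lambda_D) \leq 2k-1-s$. The case $k = 1$ is analogous: it would require three collinear points on $C$, ruled out by the same Riemann--Roch computation on the elliptic normal quintic.

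Consequently the fiber of $\mathrm{Sec}_k^{\mathrm{abs}}(C) \to X$ over $p$ is the curve $F = \{Q_1 + \ldots + Q_k + Q : Q \in C\} \cong C$. To compute $\mu_p X$, apply \Cref{lem:Hilfslemma} at a generic point $(p, Q_1 + \ldots + Q_k + Q_0)$ of $F$: any analytic arc of divisors approaching $Q_1 + \ldots + Q_k + Q_0$ splits uniquely as $s_1(t) + s_2(t)$ with $s_1(0) = Q_1 + \ldots + Q_k$ and $s_2(0) = Q_0$. This provides a local analytic parameterization of $X$ near $p$ in terms of the $(k-1)$-secant deformation $s_1$, the free point $s_2$, and the position along $\Lambda_{s_1(t)+s_2(t)}$. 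Intersecting $X$ with a generic line $\ell$ through $p$ and using $\deg X = 2k+3$, we get $\mu_p X + \#(\text{residual $k$-secants meeting }\ell) = 2k+3$, so the computation reduces to showing that the residual count equals $2k$, yielding $\mu_p X = 3$.

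The main obstacle is the precise residual count. An alternative route is direct analysis of the projective tangent cone $\mathrm{TC}_p X \subset \P^{2k+1}$, which is a hypersurface of degree $\mu_p X$: it contains the tangent cone of the $(k+1)$-dimensional join $\Lambda \star C \subset X$ at $p$, describable explicitly via the projection of $C$ from $\Lambda$. Combining this with Terracini's Lemma applied along the family of $k$-secants $\Lambda_Q$ (whose tangent hyperplanes at smooth points are $\langle T_{Q_1}C, \ldots, T_{Q_k}C, T_QC\rangle$ as $Q$ varies over $C$), one identifies the full tangent cone and verifies that it is cut out by a cubic. The expected degree $3$ reflects the ``corank-$2$'' degeneration behavior of $\sigma_k(C)$ along $\sigma_{k-1}(C) \setminus \sigma_{k-2}(C)$, consistent with the symmetric determinantal structure developed later in the section.
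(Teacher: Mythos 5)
Parts (1) and (2) are fine, and your first step towards (3) --- that every $k$-secant of $C$ through $p$ must contain the $(k-1)$-secant $\Lambda$, proved via the general-position count $h^0(\sL(-D'))=\deg \sL-\deg D'$ on an elliptic normal curve --- is correct; it even justifies a point the paper's own proof passes over quickly (that a limiting divisor $E_0$ of an arc of $k$-secants through $p$ satisfies $E_0\geq D_0$). But the multiplicity computation itself, which is the content of (3), is not carried out in either of the two routes you sketch. In the first route you reduce to the assertion that a generic line through $p$ meets $X$ in exactly $2k$ residual points counted with multiplicity; this count is never established, and establishing it is essentially equivalent to knowing $\mathrm{mult}_p X=3$ in the first place, so nothing is gained. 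In the second route you assert that ``one identifies the full tangent cone and verifies that it is cut out by a cubic''; that sentence is precisely the missing argument, not a proof of it.

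Concretely, two things are needed and absent. First, an upper bound on the tangent cone: one must show that every limiting position of chords $\overline{pq}$ with $q\in\sigma_k(C)$, $q\to p$, lies in the join of $T_p\sigma_{k-1}(C)$ with $C$. This is where \Cref{lem:Hilfslemma} actually enters: splitting an analytic arc of divisors $E_t$ with $E_0\geq D_0$ as $E_t=s_1(t)+s_2(t)$, $s_1(0)=D_0$, one sees that the chord from $p$ to $q(t)$ lies in the span of $s_2(t)$ and a chord of $\langle s_1(t)\rangle$ through $p$, whose limit lies in $T_p\sigma_{k-1}(C)$. You invoke the lemma only as a ``local parameterization'' and never draw this conclusion. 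Second, the degree computation: the reverse inclusion (e.g.\ via arcs $\gamma(t)+stx$ with $\gamma$ an arc in $\sigma_{k-1}(C)$ through $p$ and $[x]\in C$) shows the tangent cone is exactly this join, and its degree is $3$ because, by Terracini, $T_p\sigma_{k-1}(C)=\langle T_{Q_1}C,\dots,T_{Q_k}C\rangle$, so projecting $C$ from it corresponds to the linear system $H.C-2(Q_1+\dots+Q_k)$ of degree $2k+3-2k=3$. You name all the ingredients (\Cref{lem:Hilfslemma}, Terracini, projection from the tangent space) but never assemble them, so both the exact identification of the tangent cone and the conclusion $\mathrm{mult}_p X=3$ remain unproved.
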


\begin{proof}
  The first two parts are shown above: $(1)$ follows from \Cref{cor:tollhyp} and the fact that any divisor of odd degree on an elliptic $M$-curve is {\placeholdertwo}; $(2)$ is covered by \Cref{facts:ellnormal}. We prove the third claim by studying the tangent cone of $X$ at a general point on a general $(k-1)$-secant of $C$. So let $p$ be a smooth point on $\sigma_{k-1}(C)$, which means that $p$ does not lie on any $(k-2)$-secant. We claim that the tangent cone of $X$ at $p$ is the join of $C$ with the tangent space to $\sigma_{k-1}(C)$ at $p$. 

  The tangent cone $TC_p\sigma_k(C)$ is the union of all lines that are in limiting position for secant lines $\overline{pq}$ with $q\in \sigma_k(C)$ as $q$ approaches $p$, see \cite[Exercise~20.4]{Ha95}. So we clearly have $T_p\sigma_{k-1}(C)\subset TC_p\sigma_k(C)$. Let $\gamma\colon \Delta \to \sigma_{k-1}(C)$ be an analytic arc, where $\Delta$ is the open unit disk in $\C$, with $\gamma(0) = p$ and $\gamma'(0) = q \in T_p\sigma_{k-1}(C)$ and let $[x]\in C$ be a point with a fixed affine representative $x$. For every $(1:s)\in\P^1$, consider the map $\wt{\gamma}_s\colon \Delta \to \sigma_k(C)$, $t\mapsto \gamma(t) + s t x$. Clearly, $\wt{\gamma}(0) = p$ and $\wt{\gamma}'(0) = q + s x$. Since $TC_p\sigma_k(C)$ is Zariski-closed, this shows one inclusion. For the other inclusion, let $D_0$ be the divisor corresponding to the unique $(k-1)$-secant containing $p$. We consider an analytic arc $q\colon \Delta\to \sigma_k(C)$ with $q(0) = p$. We further assume that for all $t\in \Delta\setminus\{0\}$, the image $q(t)$ does not lie on any $(k-1)$-secant of $C$ and denote the corresponding divisor in $C^{(k+1)}$ by $E_t$. Then $E_0 - D_0$ is effective. Assuming genericity of the arc, the support of $E_0$ consists of distinct points. By \Cref{lem:Hilfslemma}, there is an $\epsilon>0$ and there are analytic arcs $s_1\colon \epsilon\Delta\to C^{(k)}$ and $s_2\colon \epsilon\Delta\to C$ such that for all $t\in \epsilon\Delta$ we have $E_t = s_1(t) + s_2(t)$ and $s_1(0) = D_0$. 
  The secant line to $\sigma_k(C)$ spanned by $q(t)$ and $p$ lies in the span of $s_2(t)$ and the line spanned by $p$ and a point in the span of $s_1(t)$.
  So the limiting position of such a secant line lies in the span of $s_2(0)$ and a tangent line to $\sigma_{k-1}(C)$ at $p$. 
  This proves our claim that the tangent cone is the join of the tangent space and the curve.

  This tangent cone has degree $3$, because we get a curve of degree $2k+3 - (2k) = 3$ in $\P^2$ when we project the curve $C$ from $T_p \sigma_{k-1}(C)$. Indeed, the tangent space $T_p \sigma_{k-1}(C)$ is the span of the tangent spaces $T_{x_i}C$ to the curve at the points $x_1,x_2,\ldots,x_k$ that span the $(k-1)$-secant containing $p$ by Terracini's Lemma \cite[Proposition~4.3.2]{JoinsAndIntersections}. So the projection of $C$ with center $T_p \sigma_{k-1}(C)$ corresponds to the linear system of the divisor $H.C - 2(x_1+x_2+\ldots+x_k)$, where $H.C$ is the hyperplane section. The image has therefore degree $3$.
\end{proof}

In the following proof, we use the well-established theory of multiplicities in the context of hyperbolic polynomials, see e.g.~\cite{RenMR2198215}. So let $f\in\R[x_0,x_1,\ldots,x_n]$ be homogeneous of degree $d$ and hyperbolic with respect to $e\in\R^{n+1}$. The \emph{multiplicity} of $x$ with respect to $f$ and $e$ is the multiplicity of the root $t=0$ of the polynomial $p(t) = f(x + te)$. It turns out that the multiplicity of a point $x$ does not depend on the chosen element $e$ in the hyperbolicity cone of $f$. Moreover, this shows that the multiplicity of a point is equal to the multiplicity of the hypersurface $f$ at the point $x\in\R^{n+1}\setminus\{0\}$. In particular, the multiplicity of a real point $x$ is $1$ if and only if it is a smooth real point on the hypersurface $\sV(f)$. If $g$ is an interlacing polynomial of $f$ (with respect to $e$), then the multiplicity of $x$ with respect to $g$ and $e$ is at least the multiplicity of $x$ with respect to $f$ and $e$ minus $1$ for all $x\in \sV(f)(\R)$. This claim is the observation that a polynomial $g\in\R[x_0,x_1]$ can only interlace a polynomial $f\in\R[x_0,x_1]$ if the multiplicity of $g$ at every multiple root of $f$ is at least the multiplicity of $f$ at this root minus $1$.
\begin{lem}\label{lem:signint}
  Let $C\subset \P^{2k+2}$ be an elliptic normal $M$-curve with $C(\R) = S_0 \cup S_1$, $S_i$ homeomorphic to $\P^1(\R)$, and let $f$ be a polynomial defining the hypersurface $X = \sigma_k(C)$. Fix $e\in\R^{2k+3}$ such that $f$ is hyperbolic with respect the $e$. Let $g$ be an interlacer of $f$.
  \begin{enumerate}
    \item The polynomial $g$ has even degree $2(k+1)$.
    \item The polynomial $g$ is semidefinite on every $(k-1)$-secant of $C$.
  \end{enumerate}
\end{lem}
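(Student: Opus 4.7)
The plan is to deduce both claims from Lemma~\ref{lem:ellnormaltangentcone} combined with the standard multiplicity estimate for interlacers of hyperbolic polynomials. Claim (1) is essentially bookkeeping: the definition of interlacer recalled in the preliminaries requires $\deg(g)=\deg(f)-1$, and Lemma~\ref{lem:ellnormaltangentcone}(2) gives $\deg(f)=2k+3$, so $\deg(g)=2k+2=2(k+1)$, which is in particular even.

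For claim (2), I would in fact prove the strictly stronger statement that $g$ vanishes identically on every $(k-1)$-secant; semidefiniteness follows at once. The key ingredient is the multiplicity estimate: if $g$ interlaces $f$ with respect to $e$, then $\mathrm{mult}_p(g)\geq\mathrm{mult}_p(f)-1$ at every real point $p\in\sV(f)$. This comes from the univariate observation that an interlacer of a polynomial with a root of multiplicity $m$ at $t_0$ vanishes to order at least $m-1$ at $t_0$, applied to the pencil $t\mapsto f(p+te)$, together with the identification of hyperbolic multiplicity and algebraic multiplicity at real points of a hyperbolic polynomial. Combining this with Lemma~\ref{lem:ellnormaltangentcone}(3), which gives $\mathrm{mult}_p(f)=3$ at every real point of $\sigma_{k-1}(C)\setminus\sigma_{k-2}(C)$, we obtain $\mathrm{mult}_p(g)\geq 2$, and in particular $g(p)=0$, at every such $p$.

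To extend this vanishing from the real smooth locus of $\sigma_{k-1}(C)$ to the entire variety, I would invoke that the zero locus of $g$ is Zariski closed and that the real smooth locus of $\sigma_{k-1}(C)$ is Zariski dense in the irreducible variety $\sigma_{k-1}(C)$. The density is straightforward: any $(k-1)$-secant spanned by $k$ real points of $C$ is contained in $\sigma_{k-1}(C)$ but not in $\sigma_{k-2}(C)$ (by a routine dimension count, since $\sigma_{k-2}(C)$ has strictly smaller dimension), so it provides many smooth real points, and an irreducible real variety with a smooth real point has Zariski dense real locus. Consequently $g$ vanishes on all of $\sigma_{k-1}(C)$, and since every $(k-1)$-secant $L$ is contained in $\sigma_{k-1}(C)$, we conclude $g|_L\equiv 0$, which is trivially semidefinite.

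The main obstacle I expect is giving a precise proof of the multiplicity inequality for multivariate hyperbolic interlacers, which rests on the coincidence of hyperbolic and algebraic multiplicities for hyperbolic polynomials. Once this is in place, the remainder of the argument---a Zariski density statement together with the observation that a polynomial vanishing on a variety vanishes on every subvariety of it---is routine.
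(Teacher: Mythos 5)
Your part (1) is exactly the paper's argument, and the first half of your part (2) --- the interlacer multiplicity bound at real points combined with \Cref{lem:ellnormaltangentcone}(3), then Zariski density of $\sigma_{k-1}(C)(\R)\setminus\sigma_{k-2}(C)$ to spread the vanishing to all of $\sigma_{k-1}(C)$ --- coincides with the opening step of the paper's proof (stated there more tersely: $g$ vanishes to order at least $2$ along every $(k-1)$-secant). One small repair: your justification that a real $(k-1)$-secant is not contained in $\sigma_{k-2}(C)$ ``since $\sigma_{k-2}(C)$ has strictly smaller dimension'' does not literally work, because a $(k-1)$-plane has dimension $k-1\le 2k-3=\dim\sigma_{k-2}(C)$ for $k\ge 2$; the correct comparison is between $\sigma_{k-2}(C)$ and the union of the real $(k-1)$-secants, which is $(2k-1)$-dimensional (any $k$ points on an elliptic normal curve of degree $2k+3$ are linearly independent), so that union cannot lie in $\sigma_{k-2}(C)$ and your density conclusion survives.

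The more important point is that your proof stops where the paper's proof really begins. Since $g$ vanishes identically on $\sigma_{k-1}(C)$, the assertion ``semidefinite on every $(k-1)$-secant'' is vacuously true on your reading, and the lemma would carry no information. The paper's own proof continues: on a general $k$-secant $S=\langle p_0,\dots,p_k\rangle$, the interlacer vanishes doubly along the $k+1$ hyperplanes $\langle p_i : i\neq j\rangle$ of $S$, and since $\deg g=2k+2$ this forces $g|_S$ to be a scalar multiple of the product of the squares of the corresponding linear forms, hence semidefinite on $S$, and by a limit argument semidefinite on every $k$-secant. That degree count is the substantive content (the ``$(k-1)$-secant'' in the statement appears to be an off-by-one slip for ``$k$-secant''), and your argument does not deliver it: on a general $k$-secant $g$ is not identically zero, so semidefiniteness there genuinely requires the product-of-squares identification rather than the vanishing on the singular locus alone. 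So: correct for the statement as literally written (modulo the density fix), but if the intended claim is the one the authors actually prove, there is a real gap.
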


\begin{proof}
  The first claim follows from the fact that $\deg(X) = 2k+3$. To show the second part, we use that the multiplicity of the interlacer $g$ at a point $x\in X$ is at least the multiplicity of $f$ at this point minus $1$, which implies that $g$ vanishes with multiplicity at least $2$ in every $(k-1)$-secant of $C$ by the previous Lemma~\ref{lem:ellnormaltangentcone}. Let $S = \langle p_0,p_1,\ldots,p_k\rangle$ be a general $k$-secant of $C$. Then $g$ vanishes to order at least two along the $k+1$ hyperplanes $\langle p_i\colon i\in \{0,1,\ldots,k\}\setminus\{j\}\rangle$ ($j=0,1,\ldots,k$) of $S$. Since the degree of $g$ is $2k+2$, it follows that $g$ restricted to $S$ is, up to scaling, the product of the squares of the linear forms defining these hyperplanes. In particular, it is positive or negative semidefinite on $S$. Since this holds on general secants, it follows for all of them.
\end{proof}

We again consider the rational map $\psi\colon \P^{2k+2}\ratto \P^{N-1}$ defined by the $N = \beta(k+1,2k+3) = \binom{2k+3 - (k+1)}{k+1} + \binom{2k+3 - (k+1)-1}{k} = 2k+3$ homogeneous forms of degree $k+1$ that generate the vanishing ideal of $\sigma_{k-1}(C)$ (see \cite[Theorem~1.2]{tommyfisher}). The image of this map is $C^{(k+1)}$ embedded into $\P^{2k+2}$ via a line bundle that is numerically equivalent to $2\theta+x$ (in the Chow ring of $C^{(k+1)}$ modulo numerical equivalence), see~\Cref{prop:ellnormalsymmprod}.
\begin{lem}\label{lem:ellnormalconnectivity}
  Let $C\subset \P^{2k+2}$ be an elliptic normal $M$-curve so that $C(\R) = S_0\cup S_1$ with $S_i$ homeomorphic to $\P^1(\R)$. 
  Then $C^{(j+1)}(\R) = M_0\cup M_1$ has two connected components $M_i$ for every $j\geq 0$.
\end{lem}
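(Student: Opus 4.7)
The plan is to use the Abel--Jacobi map $u : C^{(j+1)} \to \Pic^{j+1}(C)$. On an elliptic curve, every line bundle $L$ of degree $j+1 \geq 1$ satisfies $h^0(L) = j+1$ and $h^1(L) = 0$ by Riemann--Roch, so $u$ is a smooth proper morphism whose scheme-theoretic fibers are projective spaces $\P(H^0(L)) \cong \P^j$. The strategy is to count components of $\Pic^{j+1}(C)(\R)$ and to verify that on real points $u$ becomes a fiber bundle with connected fibers; the lemma then follows at once.

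First I would count components of the base. Since $C$ is an $M$-curve it has a real point $P_0 \in C(\R)$, and the morphism $P \mapsto [P - P_0]$ is an $\R$-isomorphism $C \to \Pic^0(C)$, so $\Pic^0(C)(\R) \cong C(\R) = S_0 \sqcup S_1$ has exactly two connected components. Translation by the real class $[(j+1)P_0]$ gives an $\R$-isomorphism $\Pic^0(C) \to \Pic^{j+1}(C)$, so $\Pic^{j+1}(C)(\R)$ also has exactly two connected components.

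Next I would analyse $u$ on real points. For a real line bundle $L$ of degree $j+1$, the real structure on $H^0(L)$ produces a real vector space $H^0(L)(\R)$ of real dimension $j+1 > 0$. A non-zero element is a real section whose divisor of zeros is a real effective representative of $[L]$, which both shows that $u(\R) : C^{(j+1)}(\R) \to \Pic^{j+1}(C)(\R)$ is surjective and identifies the real fiber over $[L]$ with $\P(H^0(L)(\R)) \cong \P^j(\R)$, in particular a connected set. Since $u$ is a proper submersion between smooth schemes, the induced map $u(\R)$ is a proper submersion of smooth real manifolds, and Ehresmann's fibration theorem turns it into a locally trivial fiber bundle with typical fiber $\P^j(\R)$.

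The conclusion is then immediate: a locally trivial fiber bundle with connected fibers over a base with exactly two connected components has a total space with exactly two connected components, so $C^{(j+1)}(\R)$ splits as $M_0 \sqcup M_1$ as claimed. The main subtle point I expect is verifying that the real scheme-theoretic fiber of $u$ over $[L]$ is genuinely the standard $\P^j(\R)$ rather than a twisted real form of $\P^j$ (which could in principle even be empty); this subtlety is precisely resolved by producing an explicit non-zero real section of $L$, i.e.\ by the inequality $\dim_\R H^0(L)(\R) = j+1 > 0$.
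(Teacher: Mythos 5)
Your proof is correct and follows essentially the same route as the paper, which disposes of the lemma in one line by observing that $C^{(j+1)}$ is a $\P^j$-bundle over the genus-$1$ $M$-curve (via the Abel--Jacobi map to the degree-$(j+1)$ Picard variety, which is isomorphic to $C$ over $\R$). Your write-up simply supplies the details the paper leaves implicit — the component count of the base, the fact that the real fibers are nonempty standard $\P^j(\R)$'s thanks to real sections, and the Ehresmann argument — all of which are fine.
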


\begin{proof}
  The symmetric product $C^{(j+1)}(\R)$ has two connected components, because it is a $\P^j$-bundle over $C$, which is an $M$-curve of genus $1$. 
\end{proof}

\begin{rem}
  Identifying elements of $C^{(j+1)}$ with divisors of degree $j+1$ on $C$, the two connected components of $C^{(j+1)}(\R)$ are the sets of conjugation invariant divisors on $C$ with different parity of real points on $S_0$, where $C(\R) = S_0\cup S_1$ with $S_i$ homeomorphic to $\P^1(\R)$. More explicitly, if $j+1$ is even, then one connected component of $C^{(j+1)}(\R)$ contains all conjugation invariant divisors $D$ such that the number (counted with multiplicity) of points in the support of $D$ that are in $S_0$ is even (and therefore also the number of points in $\supp(D)\cap S_1$). The other connected component contains the divisors with $\supp(D)\cap S_i$ odd (for $i=0,1$). In case that $j+1$ is odd, the two cases are that $\supp(D)\cap S_0$ is even (and hence $\supp(D)\cap S_1$ odd) or that $\supp(D)\cap S_0$ is odd (and hence $\supp(D)\cap S_1$ even). 
\end{rem}

Since the rational map $\psi\colon \sigma_k(C)\ratto \P^{N-1}$ is continuous on the set $\sigma_k(C)(\R)\setminus \sigma_{k-1}(C)(\R)$, its connected components either map to $M_0$ or $M_1$. 
\begin{lem}\label{lem:ellnormalinterlacerchar}
  Let $C\subset\P^{2k+2}$ be an elliptic normal $M$-curve and let $X = \sigma_k(C)$. Let $f$ be a polynomial with $\sV(f) = X$ and $g$ a homogeneous polynomial of degree $2k+2$. Then $g$ is an interlacer of $f$ if and only if there is a unique quadric hypersurface $Y = \sV(q)\subset \P^{N-1}$ such that $\psi^{-1}(Y) = \sV(g)$, where the quadric $q$ has constant but opposite signs on the two connected components of $C^{(k+1)}$.
\end{lem}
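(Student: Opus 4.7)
The plan is to prove the equivalence in two main parts: first identify interlacers with pullbacks $\psi^\ast q$ of quadrics $q$ on $\P^{N-1}$, and then match the interlacer condition with the opposite-sign condition on the two components of $C^{(k+1)}(\R)$.

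For the identification, the pullback $\psi^\ast$ on degree two forms is injective by Lemma~\ref{lem:noquadratics}, and both $H^0(\P^{N-1},\sO(2))$ and $H^0(C^{(k+1)},\sO(2D))$ have dimension $(2k+3)(k+2)$ by Corollary~\ref{cor:eulercompo} and are identified by restriction by Lemma~\ref{lem:symprodprojnorm}. The key geometric input is that on a generic $k$-secant $S=\langle p_0,\ldots,p_k\rangle$, every generator $f_i$ of the ideal of $\sigma_{k-1}(C)$ in degree $k+1$ restricts to a scalar multiple of $\ell_0\cdots\ell_k$ (where $\ell_j$ defines the hyperplane in $S$ opposite $p_j$), because $f_i$ vanishes on those $(k-1)$-secants which are the hyperplanes of $S$. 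Consequently $\psi^\ast q|_S = q(\psi(S))\cdot(\ell_0\cdots\ell_k)^2$. An interlacer $g$ has degree $2k+2$ by Lemma~\ref{lem:signint}, and it must vanish to order at least two on $\sigma_{k-1}(C)$ by combining Lemma~\ref{lem:ellnormaltangentcone} with the multiplicity bound for interlacers. The proof of Lemma~\ref{lem:signint} moreover shows $g|_S$ is a scalar multiple of $(\ell_0\cdots\ell_k)^2$. Matching these scalars fiber-by-fiber produces a section of $\sO(2D)$ on $C^{(k+1)}$, which corresponds to the desired quadric $q$; uniqueness is given by the injectivity above.

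It remains to match signs. Since $(\ell_0\cdots\ell_k)^2\geq 0$ on the real points of $S$, the sign of $g|_S$ at a real interior point of $S$ equals the sign of $q$ at $\psi(S)\in C^{(k+1)}(\R)$, and by continuity this sign is constant on each connected component of $C^{(k+1)}(\R)$; Lemma~\ref{lem:ellnormalconnectivity} gives exactly two such components. The core claim, and the main obstacle, is that these two constant signs are opposite. I would argue this by combining two facts: each connected component of $\sigma_k(C)(\R)\setminus\sigma_{k-1}(C)(\R)$ maps under $\psi$ into $M_0$ or $M_1$ (as observed just before the statement), and an interlacer changes sign across the smooth locus of $X$ in a direction of hyperbolicity $e$. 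Picking a line through two general real points lying on real $k$-secants $S,S'$ with $\psi(S)\in M_0$ and $\psi(S')\in M_1$, one can trace the sign of $g$ along this line and see an odd number of sign flips across the smooth locus of $X$, which forces opposite signs of $q$ on $M_0$ and $M_1$. The converse direction is then routine: given a quadric $q$ with opposite signs on $M_0,M_1$, the pullback $\psi^\ast q$ has the right degree and the correct vanishing order along $\sigma_{k-1}(C)$, and its sign pattern along a generic line in direction $e$ matches the alternating pattern forced by interlacing.
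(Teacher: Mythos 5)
Your first half---identifying an interlacer $g$ of degree $2k+2$ with the pullback $\psi^*q$ of a unique quadric, via \Cref{lem:signint}, \Cref{lem:noquadratics}, \Cref{lem:symprodprojnorm} and \Cref{cor:eulercompo}---is essentially the intended route (and the paper is just as brief as you are about why the fibrewise scalars glue to a section of $\sO(2D)$). The genuine gap is in your core sign claim. You assert that ``an interlacer changes sign across the smooth locus of $X$'' and that along a line through two general real points on secants $S,S'$ with $\psi(S)\in M_0$, $\psi(S')\in M_1$ one sees an odd number of sign flips. Both points fail: an interlacer does not vanish at a general real point of $X$, so it has locally constant sign when you cross $X$ (it is $f$ that changes sign there), and a line joining two general real points of $X$ does not pass through the hyperbolicity point $e$, so the interlacing hypothesis gives no control over the number, let alone the parity, of zeros of $g$ on that segment. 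As written, nothing in your argument forces the two component-wise constant signs of $q$ to be opposite, and this was exactly the point to be proved.

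The fix is the paper's argument: work along a general line $L$ through $e$. There $f$ has $2k+3$ simple real roots and the roots of $g$ interlace them, so $g$ has exactly one zero strictly between consecutive intersection points of $L$ with $X$; hence the sign of $g$---equivalently the sign of $q$ at the $\psi$-images of these points---alternates from one intersection point to the next. Since $q$ has constant sign on each of $M_0$ and $M_1$, consecutive images must lie in different components, and $q$ has opposite signs on them. Two further points you gloss over: a general interlacer may vanish at points of $\sigma_k(C)(\R)\setminus\sigma_{k-1}(C)(\R)$, which the paper handles by first taking $g=D_e f$ and then passing to $g_0+\epsilon g$ with $\epsilon\to 0$; and the ``routine'' converse still needs an argument---the paper gets it by observing that $\psi^*(q)\cdot D_e f\geq 0$ on $\sigma_k(C)(\R)$ and invoking \cite[Theorem~2.1]{MR3395549}, whereas your root-counting version would first require the alternation of components along lines through $e$, i.e.\ the forward direction applied to $D_e f$.
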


\begin{proof}
  Let $g$ be an interlacer of $f$ with the property that $g$ does not vanish on any point in $\sigma_k(C)(\R)\setminus \sigma_{k-1}(C)(\R)$, e.g.~a directional derivative of $f$ with
  respect to $e$ in the hyperbolicity cone of $f$. Let $L\cong \P^1$ be a general line through $e$ so that the roots of $f$ and $g$ alternate along $L$ (viewed from $e$). Since $g$ does not vanish at any point of $\sigma_k(C)(\R)\setminus \sigma_{k-1}(C)(\R)$, it has constant sign on the connected components of this set.
  Since $g$ vanishes on $\sigma_{k-1}(C)$ and has degree $2k+2$, there is a unique quadratic hypersurface $Y = \sV(q)\subset \P^{N-1}$ with $\sV(g) = \psi^{-1}(Y)$, which has constant sign on $M_0$ and $M_1$. 
  The fact that $g$ interlaces $f$ implies that the connected component of $C^{(k+1)}(\R)$ on which the image of the intersection point of $L$ and $X$ under $\psi$ lies alternates along $L$, viewed from $e$.
  This quadric $q$ must therefore have opposite signs on the two connected components of $C^{(k+1)}$. 
  
  More generally, if $g_0$ is any interlacer, we get the claim by a limit argument considering $g_0 + \epsilon g$ for $\epsilon \to 0$.
  
  On the other hand, if a defining polynomial $q$ of a quadratic hypersurface $Y\subset\P^{N-1}$ has different signs on the two connected components of $C^{(k+1)}(\R)$, then $h = \psi^*(q)$ has different signs on the connected components of $\sigma_k(C)(\R)\setminus \sigma_{k-1}(C)(\R)$ mapping to $M_0$, resp.~$M_1$. So for a directional derivative $g = D_e f$ as above, $gh$ is nonnegative on $\sigma_k(C)(\R)$ (after changing sign, if necessary). 
  Now \cite[Theorem~2.1]{MR3395549} implies that $h$ is an interlacer of $f$.
\end{proof}

\begin{lem}\label{lem:contactint}
  There is a quadratic form $q$ which has constant but opposite signs on the two connected components of $C^{(k+1)}$ and whose zero divisor on $C^{(k+1)}$ is of the form $2\cdot ( u^*([B]+[B'])+x_{P_0})$ for some $P_0\in C$ and divisors $B,B'$ on $C$ of degree $k+1$.
\end{lem}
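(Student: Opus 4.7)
The plan is to find a real section of $\sO(2H)$ on $C^{(k+1)}$ whose divisor has the required form $2D$ and which takes opposite signs on $M_0,M_1$; the second part of \Cref{lem:symprodprojnorm} then realizes it as the restriction of a unique quadric on $\P^{N-1}$. Here $H$ is the hyperplane class of $C^{(k+1)}\hookrightarrow\P^{N-1}$.

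I first parametrize the admissible divisors $D=u^*([B]+[B'])+x_{P_0}$. Since $D$ and $H$ are numerically equivalent, $D-H=u^*\eta$ for a unique $\eta\in\pic^0(C)$, and the condition $2D\sim 2H$ forces $\eta\in\pic^0(C)[2]$; via the description of $H$ in \Cref{prop:ellnormalsymmprod}, this amounts to $B+B'+P_0\sim D_C+\eta$ in $\pic(C)$, where $D_C$ is a hyperplane divisor on $C\subset\P^{2k+2}$. All four 2-torsion classes are defined over $\R$ since $C$ is an $M$-curve. To make $D$ real I fix $P_0\in C(\R)$ and pick a real section of $|D_C-P_0+\eta|$ with no real zeros; since the degree $2k+2$ is even, such sections form a non-empty open chamber of the real discriminant arrangement. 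Its zero divisor decomposes as $B+\bar B$ with $B$ non-real, and I set $B'=\bar B$.

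Turning to signs, the resulting real section $q$ of $\sO(2H)$ with $\mathrm{div}(q)=2D$ can be written as $q=f_\eta\cdot s_D^2$, where $s_D\in H^0(\sO(D))$ is a real section with divisor $D$ and $f_\eta$ is the (unique up to positive scalar) real meromorphic function with $\mathrm{div}(f_\eta)=2H-2D$, which exists precisely because $\eta$ is 2-torsion. All real zeros and poles of $f_\eta$ have even order (they sit on $2H$ or on $2x_{P_0}(\R)$; the $u^*[B]$ summand has no real points since $B$ is non-real and generically $B\not\sim\bar B$), so the sign of $f_\eta$, and hence of $q$, is locally constant on $C^{(k+1)}(\R)$ and in particular constant on each $M_i$. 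Let $\epsilon_\eta\in\{\pm 1\}$ denote the ratio of these two signs; the quadric has the required opposite signs precisely when $\epsilon_\eta=-1$.

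The assignment $\eta\mapsto\epsilon_\eta$ is a homomorphism $\pic^0(C)[2]\to\{\pm 1\}$: for any $\eta_1,\eta_2$ the ratio $f_{\eta_1}f_{\eta_2}/f_{\eta_1+\eta_2}$ has divisor twice a principal one and is therefore the square of a real meromorphic function, hence nonnegative on the real locus. Since $f_0$ is itself a square, $\epsilon_0=+1$, and the lemma reduces to showing $\epsilon$ is non-trivial. This non-triviality is the main obstacle. To establish it, I would combine the existence of an interlacer coming from \Cref{cor:tollhyp} and \Cref{lem:ellnormalinterlacerchar}, which already produces at least one real section of $\sO(2H)$ with opposite signs on $M_0,M_1$, with a topological analysis of the image of $\pic^0(C)[2]$ in $H^1(C^{(k+1)}(\R);\Z/2)$: using the projective-bundle structure $u\colon C^{(k+1)}\to\pic^{k+1}(C)\cong C$ and the distribution of the real 2-torsion between the two circles of $C(\R)$, one expects this image to detect the decomposition $M_0\sqcup M_1$, forcing some nontrivial $\eta_0$ to satisfy $\epsilon_{\eta_0}=-1$.
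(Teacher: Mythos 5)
Your reduction of the problem is sensible and genuinely different in spirit from the paper: you parametrize the candidate ``contact'' divisors $D=u^*([B]+[B'])+x_{P_0}$ with $2D\sim 2H$ by the real $2$-torsion of $\pic^0(C)$, write the would-be quadric as $f_\eta\cdot s_D^2$, and reduce the lemma to the non-triviality of a sign character $\epsilon\colon\pic^0(C)[2]\to\{\pm1\}$. But exactly this non-triviality is the entire content of the lemma, and your proposal does not prove it: you say you ``would'' combine the interlacer input with a topological analysis of the image of $\pic^0(C)[2]$ in $H^1(C^{(k+1)}(\R);\Z/2)$ and that ``one expects'' this to detect the two components. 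That is a genuine gap, not a routine verification. Moreover, the specific shortcut you suggest does not close it: \Cref{cor:tollhyp} together with \Cref{lem:ellnormalinterlacerchar} does produce \emph{some} real quadric with opposite signs on $M_0$ and $M_1$, but such a quadric has, in general, a reduced zero divisor on $C^{(k+1)}$, so it does not lie in your family $\{f_\eta s_D^2\}$ and gives no information about $\epsilon$. (There are also smaller unverified points, e.g.\ that for suitable $P_0$ and $\eta$ the class $D_C-P_0+\eta$ has even parity on both ovals and actually contains a totally non-real divisor; and note that the relation $f_{\eta_1}f_{\eta_2}/f_{\eta_1+\eta_2}=c\,g^2$ only holds up to a real constant $c$, which is harmless for your sign \emph{ratio} but should be said.)

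For comparison, the paper avoids the character argument entirely and settles the sign by an explicit construction: it restricts to the curve $C_0=\{Q+kQ_0\}\subset C^{(k+1)}$, which is a section of the bundle $u\colon C^{(k+1)}\to\pic^{k+1}(C)$ and hence meets both real components $M_0$ and $M_1$; writing the degree-$3$ divisor $H.C_0$ in Weierstra\ss{} form $y^2=\prod_i(x-p_i)$, the product $(x-p_1)(x-p_2)$ has zero divisor $2(P_0+P_1+P_2)\sim 2(H.C_0)$ and visibly opposite signs on the oval and the unbounded component; one then checks via \Cref{prop:ellnormalsymmprod} that $2\,u^*([P_1+kQ_0]+[P_2+kQ_0])+2x_{P_0}\sim 2H$ and invokes the completeness statement of \Cref{lem:symprodprojnorm} to realize it by a quadric, whose sign on each $M_i$ is read off from its restriction to $C_0$. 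If you want to salvage your route, you would need to make the cohomological detection of $M_0\sqcup M_1$ by the $2$-torsion classes precise; the paper's restriction-to-a-section trick is exactly the concrete form of that detection, so incorporating it (choose $\eta$ coming from the difference of two real Weierstra\ss{} points on the appropriate ovals) is the natural way to finish your argument.
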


\begin{proof}
  By \Cref{prop:ellnormalsymmprod} and \Cref{lem:symprodprojnorm} there is a hyperplane $H$ whose intersection divisor with $C^{(k+1)}$ is given by $u^*([E_1]+[E_2])+x_P$ for some $P\in C$ divisors $E_1,E_2$ of degree $k+1$ on $C$. We consider the embedding $C\hookrightarrow C^{(k+1)}, \,Q\mapsto Q+kQ_0 $ for some $Q_0\in C$ and denote its image by $C_0$. The intersection of $H$ with $C_0$ is then given by $D=Q_1+Q_2+P$ where $Q_i$ is the unique point on $C$ such that $Q_i+kQ_0\equiv E_i$. We consider the embedding of $C$ to $\pp^2$ given by $D$. We can describe the image in Weierstra\ss{} form: $$y^2=\prod_{i=1}^3(x-p_i)$$ for some real numbers $p_1<p_2<p_3$. We denote by $P_0$ the point at infinity and by $P_1$ and $P_2$ the points $(p_1,0)$ and $(p_2,0)$. Letting $l_i=x-p_i$ we have that the zero divisor of $l_1l_2$ is given by $2\cdot(P_0+P_1+P_2)$ and is linearly equivalent to $2D$. Now we consider the divisor $A=2\cdot u^*([P_1+kQ_0]+[P_2+kQ_0])+2x_{P_0}$ on $C^{(k+1)}$. Again by \Cref{prop:ellnormalsymmprod} and since $$2(P_1+kQ_0)+2(P_2+kQ_0)+2P_0\equiv 2Q_1+2Q_2+2P+4kQ_0\equiv 2E_1+2E_2+2P,$$ we find that $A$ is linearly equivalent to $2u^*([E_1]+[E_2])+2x_P$. By \Cref{lem:symprodprojnorm} there is thus a quadratic form $q$ whose zero divisor is $A$. Since all zeros of $q$ are of even multiplicity, it has constant sign on the two connected components of $C^{(k+1)}$. In order to examine these signs it suffices to consider the restriction of $q$ to $C_0$ because $C_0$ is a section of the projective bundle $C^{(k+1)}\to C$. By construction the zero divisor of $q$ on $C_0$ is given by $2\cdot(P_0+P_1+P_2)$ and thus $q|_{C_0}$ is some scalar multiple of $l_1l_2$ which indeed has opposite signs on each of the connected components of $C(\R)$.
\end{proof}

\begin{rem}
  The line bundle associated to the divisor $u^*([B]+[B'])+x_{P_0}$ from the previous lemma is an Ulrich line bundle on $C^{(n)}$. Indeed, by construction it satisfies the conditions of \cite[Prop.~4.1(ii)]{beauvilleUlrich}
\end{rem}

\begin{thm}\label{thm:ellnormaldetrep}
  Let $C\subset \P^{2k+2}$ be an elliptic normal $M$-curve of degree $2k+3$. There exists a definite determinantal representation for polynomial $f$ generating the vanishing ideal of the hyperbolic hypersurface $\sigma_k(C)$.
\end{thm}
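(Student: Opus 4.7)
The plan is to apply the Hesse-Dixon construction, formalized through Beauville's correspondence \cite{beauvilleUlrich}, building a symmetric determinantal representation of $f$ from the contact interlacer $g = \psi^*(q)$ of \Cref{lem:contactint}. Write $d = 2k+3 = n+1$ with $n = 2k+2$, so $\omega_X \cong \cO_X$ by adjunction. The key datum is the line bundle $\cL = \cO_{C^{(k+1)}}(R)$ on the symmetric product, where $R = u^*([B]+[B']) + x_{P_0}$ has numerical class $2\theta + x$; in particular $\cL^{\otimes 2} \cong \cO_{C^{(k+1)}}(1)$ via the embedding of $C^{(k+1)}$ into $\P^{2k+2}$ by $\psi$, and $h^0(\cL) = d$ by \Cref{cor:eulercompo}.

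First, I would transfer $\cL$ to a rank-one Ulrich sheaf $\cF$ on $X$ via the incidence variety $\mathscr{Y} = \{(D, v) \in C^{(k+1)} \times \P^{2k+2} : v \in \langle D \rangle\}$, with projections $p_1: \mathscr{Y} \to C^{(k+1)}$ (a $\P^k$-bundle) and $p_2: \mathscr{Y} \to X$ (birational). Define $\cF := p_{2,*}\, p_1^*\cL$. Using $p_{1,*}\cO_\mathscr{Y} = \cO_{C^{(k+1)}}$, Leray gives $h^0(X, \cF) = h^0(C^{(k+1)}, \cL) = d$, the correct dimension for a rank-one Ulrich sheaf on a degree-$d$ hypersurface. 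The remaining Ulrich vanishings $h^i(\cF(-j)) = 0$ for $i \geq 0$ and $1 \leq j \leq d$ reduce, via Leray for $p_1$ and $p_2$, to vanishings on $C^{(k+1)}$ that follow from \Cref{cor:kodaira}.

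Second, the self-duality $\cF \cong \cF^\vee$ (using $\omega_X \cong \cO_X$), which is required for the \emph{symmetry} of the determinantal representation by Beauville's correspondence \cite[Prop.~4.1]{beauvilleUlrich}, follows from $\cL^{\otimes 2} \cong \cO_{C^{(k+1)}}(1)$ via relative duality for $p_2$ combined with the projection formula. Beauville's theorem then produces a symmetric $d \times d$ matrix $A(x)$ of linear forms on $\P^{2k+2}$ with $\det A = c \cdot f$ for some $c \in \R^{\times}$.

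Finally, for definiteness, the sign condition in \Cref{lem:contactint} (constant opposite signs of $q$ on the two connected components of $C^{(k+1)}(\R)$) translates into $A(e)$ being definite at a point $e$ in the hyperbolicity cone of $f$: choosing a real basis $s_1, \ldots, s_d$ of $H^0(X, \cF)$, the quadratic form $s \mapsto s^T A(e) s$ inherits the sign constancy of $q$ via the Beauville correspondence and must be definite since $\det A(e) = c \cdot f(e) \neq 0$. The main obstacle is the Ulrich verification in the second step: because $X$ is singular along $\sigma_{k-1}(C)$, the behavior of $p_{2,*}$ over the singular locus requires care, but the explicit resolution of $X$ by $\mathscr{Y}$ and the detailed cohomology of $C^{(k+1)}$ (\Cref{prop:eulercomp}, \Cref{cor:eulercompo}, \Cref{cor:kodaira}) should supply the necessary control.
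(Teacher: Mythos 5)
Your strategy (produce a rank-one Ulrich sheaf on $X=\sigma_k(C)$ by pushing the line bundle $\cL=\cO_{C^{(k+1)}}(R)$, $R=u^*([B]+[B'])+x_{P_0}$, through the incidence variety, then invoke Beauville's correspondence) is a genuinely different route from the paper, and it is not unreasonable in spirit: the paper itself remarks after \Cref{lem:contactint} that this divisor gives an Ulrich line bundle. The paper, however, runs Dixon's process directly: using \Cref{cor:eulercompo} and \Cref{lem:symprodprojnorm} it completes the contact quadric $q$ to a symmetric $(2k+3)\times(2k+3)$ matrix of quadrics with $h_{11}h_{ij}=h_{1i}h_{1j}$, pulls back along $\psi$ to get a rank-one matrix of degree-$(2k+2)$ forms on $X$ whose $(1,1)$-entry is an interlacer by \Cref{lem:ellnormalinterlacerchar}, and then quotes \cite[Theorem~4.7]{MR3395549}, which is exactly the device that converts ``rank one along $\sV(f)$ plus interlacing $(1,1)$-entry'' into a \emph{definite} determinantal representation.

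As written, your proposal has genuine gaps at each of its three steps. (1) The Ulrich vanishings do not ``follow from \Cref{cor:kodaira}'': pushing $p_1^*\cL\otimes p_2^*\cO_X(-j)$ down along the $\P^k$-bundle $p_1$ produces, for $j\geq k+1$, twists of $\cL$ by $R^kp_{1*}\cO_{\P(\cE)}(-j)$, i.e.\ symmetric powers of the dual secant bundle $\cE^\vee$ (twisted by $\det\cE^\vee$); these are not line bundles of class $a\theta+bx$, so \Cref{cor:kodaira} does not apply, and the vanishing of $R^{>0}p_{2*}p_1^*\cL$ over the positive-dimensional fibres of $p_2$ above $\sigma_{k-1}(C)$ is precisely the point you defer rather than prove. (2) The asserted isomorphism $\cL^{\otimes 2}\cong\cO_{C^{(k+1)}}(1)$ is false: by \Cref{prop:ellnormalsymmprod} the hyperplane class of the $\psi$-embedding is itself numerically $2\theta+x$, the same class as $\cL$, and the content of \Cref{lem:contactint} is that the quadric $q$ cuts out $2R$, i.e.\ $\cL^{\otimes 2}\cong\cO_{C^{(k+1)}}(2)$; moreover the self-duality of $\cF$ needed for a \emph{symmetric} representation must be checked across $\sigma_{k-1}(C)$, where $\cF$ is not locally free, and ``relative duality plus projection formula'' is not an argument there. (3) Most seriously, definiteness does not follow from $\det A(e)=c\,f(e)\neq 0$: a nondegenerate real symmetric matrix can have any signature, and hyperbolicity of $f$ alone does not force a symmetric representation to be definite at $e$. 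The sign condition of \Cref{lem:contactint} enters the paper only through \Cref{lem:ellnormalinterlacerchar}, which turns $\psi^*(q)$ into an interlacer, and it is the interlacing hypothesis in \cite[Theorem~4.7]{MR3395549} that yields definiteness; your phrase ``inherits the sign constancy of $q$'' replaces this mechanism with an assertion. To repair your route you would have to prove the Ulrich and duality statements honestly and then still re-import the interlacer (or an explicit signature) argument for definiteness, at which point you are essentially back to the paper's proof.
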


\begin{proof}
  We start with the quadratic form $q$ from \Cref{lem:contactint} whose divisor is of the form $2D$ with $D= u^*([B]+[B'])+x_{P_0}$ for some $P_0\in C$ and divisors $B,B'$ on $C$ of degree $k+1$. By \Cref{cor:eulercompo} and \Cref{lem:symprodprojnorm} we can find linearly independent quadratic forms $h_{11},h_{12},\ldots,h_{1(2k+3)}$ that vanish on $D$ and $h_{11}=q$. We complete the $(2k+3)\times (2k+3)$ matrix
  \[
\begin{pmatrix}
  h_{11} & h_{12} & h_{13} & \cdots & h_{1(2k+3)} \\
  h_{12} & & & & \\
  h_{13} & & & & \\
  \vdots & & & & \\
  h_{1(2k+3)} & & & &
\end{pmatrix}
  \]
  to a matrix of rank $1$ on $C^{(k+1)}$
  by finding $h_{ij}$ with the property that $h_{11}h_{ij} = h_{1i}h_{1j}\in H^0(C^{(k+1)},4D)$. In fact, this $h_{ij}$ is the unique section of $H(C^{(2)},\sL(2D))$ whose zero divisor is $(h_{1i})_0 + (h_{1j})_0-2D$ (which is linearly equivalent to $2D$). By construction, this matrix has rank $1$ on $C^{(k+1)}$ wherever $h_{11}$ is non-zero. By semi-continuity of the rank, it has rank $1$ everywhere on $C^{(k+1)}$.
  Next, we pull this matrix back to $\sigma_k(C)$ along $\psi$, that is we set $a_{ij} = \psi^*(h_{ij})$. The matrix $A = (a_{ij})$ has rank $1$ along $\sigma_k(C)$ and it is filled with forms of degree $2k+2$. 
  The entry $a_{11}$ is an interlacer of $f$ by \Cref{lem:ellnormalinterlacerchar}. So \cite[Theorem~4.7]{MR3395549} implies our claim.
\end{proof}

\begin{ex}\label{ex:ellparticularlynice}
  To construct a particularly nice determinantal representation of the hyperbolic hypersurface $\Sigma=\sV(f)=\sigma_1(\iota(C))$ from \Cref{ex:elliptic1}, we pick the divisor $D'$ in the construction given in the above proof by pushing the following quadrics forward through the rational map $\psi\colon \Sigma \ratto \P^4$, $p\mapsto (q_0(p):q_1(p):\ldots:q_4(p))$, where the quadrics $q_i$ are a generating set for the homogeneous vanishing ideal of $\iota(C)\subset \P^4$. Take the cone over the curve $\iota(C)$ with vertex $\iota(1:0:1) = (1:0:1:0:1)\in\P^4$, which is on the curve. The saturated ideal of this surface is generated by two quadrics. This pencil of quadrics contains $4$ singular quadrics (since this is a pencil of quadrics in four variables after projecting away from the vertex). We choose the following two out of these four.
  \begin{align*}
    f_1 = &\left(\sqrt{2}+1\right)z_0^2+\left(-\sqrt{2}-1\right)z_0z_2+z_1z_2-z_0z_3+\left(-\sqrt{2}-1\right)z_1z_3+z_2z_3+ \\
     & \left(\sqrt{2}+1\right)z_3^2+\left(-\sqrt{2}-1\right)z_0z_4-z_1z_4+\left(\sqrt{2}+1\right)z_2z_4 \\
    f_2 = &\left(-\sqrt{2}+1\right)z_0^2+\left(\sqrt{2}-1\right)z_0z_2+z_1z_2-z_0z_3+\left(\sqrt{2}-1\right)z_1z_3+z_2z_3+\\
    & \left(-\sqrt{2}+1\right)z_3^2+\left(\sqrt{2}-1\right)z_0z_4-z_1z_4+\left(-\sqrt{2}+1\right)z_2z_4
  \end{align*}
  These two quadrics are pull backs of linear forms via $\psi$ since they vanish on the curve $\iota(C)$; say $f_i = \psi^*(\ell_i)$. The divisors of these linear forms on $\psi(\Sigma)\cong C^{(2)}$ are of the form $C_0 + 2P_i f$ and $P_2 - P_1 = T$ is a $2$-torsion point on $C$. 
  So now we set $D' = C_0 + P_1f + P_2f$. Pulling this divisor back to $\Sigma$ and taking the minimal free resolution of $\psi^*(2D')$ gives a definite determinantal representation of $\Sigma$. Indeed, the computation shows
  \[
    f = \det 
    \begin{pmatrix}
      {-{z}_{2}}&{z}_{0}&{-{z}_{1}}&{-{z}_{1}}&{-{z}_{3}}\\
      {z}_{0}&{-{z}_{2}}&0&{z}_{3}&0\\
      {-{z}_{1}}&0&{z}_{0}&0&{z}_{2}\\
      {-{z}_{1}}&{z}_{3}&0&-{z}_{0}+{z}_{4}&0\\
      {-{z}_{3}}&0&{z}_{2}&0&{z}_{4}
    \end{pmatrix}
    \]
    and this matrix is definite at the point $(2:0:-3:0:6)$.
    This linear positive definite determinantal representation certifies that $\Sigma$ is a hyperbolic hypersurface.
\end{ex}

\section{Hyperbolic and Spectrahedral Shadows}\label{sec:extendedform}

\subsection{$M$-curves}
In \cite{clauscurves} Scheiderer proved that the closed convex hull of any one-dimensional semi-algebraic set is a spectrahedral shadow. However, his proof does not give any bounds on the size of the matrices or on the number of additional slack variables. The purpose of this section is to show that the convex hull of a connected component of an $M$-curve can be represented as a \emph{hyperbolic shadow}. We further give degree and dimension bounds that can be expressed only in terms of the genus and the degree of the $M$-curve in question.

\begin{thm}\label{thm:hyperbolicshadows}
  Let $C\subset\pp^n$ be an $M$-curve of degree $d$ and genus $g$ and let $C_0$ be a very compact connected component of $C(\R)$. The convex hull of $C_0$ is a shadow of an at most $d+1$-dimensional hyperbolicity cone  of degree at most 
  \[
    \left(\left\lfloor\frac{d}{2}\right\rfloor+1\right) \cdot\sum_{i=0}^{\lfloor\frac{d}{2}\rfloor}\binom{g}{i}-g\cdot\sum_{i=0}^{\lfloor\frac{d}{2}\rfloor-1}\binom{g-1}{i} 
  \]
  If $g\leq\lfloor\frac{d}{2}\rfloor$, then this can be bounded from above by $(d+2-g)\cdot 2^{g-1}$.
\end{thm}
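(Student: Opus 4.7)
The plan is to re-embed $C$ into a projective space $\pp^{2k}$ with $k := \lfloor d/2 \rfloor$ via a \placeholdertwo complete linear system whose underlying divisor majorizes a hyperplane section of the original embedding. Then \Cref{prop:hypconetoll} identifies the hyperbolicity cone of the resulting secant hypersurface with $\conv(\iota(C_0))$, and a linear projection sends this cone onto $\conv(C_0)$.

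Since $C_0$ is very compact, I would first choose a real hyperplane $H \subset \pp^n$ with $H\cap C_0 = \emptyset$, so that $d_0 := \deg(H|_{C_0}) = 0$. Writing $d_i := \deg(H|_{S_i})$ for the other real components of $C(\R)$ and $r := \#\{i\geq 1 : d_i \text{ odd}\}$, the relation $\sum_{i\geq 1} d_i = d$ gives $r \equiv d \pmod{2}$. The next task is to construct an effective divisor $E \geq 0$, supported on $C\setminus C_0$, of degree $2k+g-d$ (which equals $g$ if $d$ is even and $g-1$ if $d$ is odd), such that $D' := H + E$ has odd degree on each $S_i$ ($i \geq 1$) and even degree on $C_0$, making $D'$ \placeholdertwo. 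Explicitly, one places a single real point of $E$ on each $S_i$ ($i\geq 1$) with $d_i$ even and fills the remaining degree with complex-conjugate pairs of non-real points. The parity identity $r\equiv d\pmod 2$ is exactly what guarantees that this residual degree is non-negative and even, and this parity accounting is the main technical point of the argument: it is what makes $k = \lfloor d/2 \rfloor$ just sufficient in both parities of $d$.

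By \Cref{prop:Dveryample}, $D'$ is very ample with $\ell(D') = 2k+1$, giving an embedding $\iota\colon C \hookrightarrow \pp^{2k}$. Multiplication by a section $s_E$ of $\sL(E)$ with divisor $E$ embeds $H^0(\sL(H))$ as a subspace of $H^0(\sL(D'))$, yielding a linear projection $\pi\colon \pp^{2k}\ratto\pp^n$ with $\pi\circ\iota$ equal to the given embedding $C\hookrightarrow\pp^n$; its center meets $\iota(C)$ only along $\iota(\supp(E))$, so it is disjoint from $\iota(C_0)$. Because $D'$ has even degree on $C_0$, \Cref{rem:verycompact} shows $\iota(C_0)$ is the very compact component of $\iota(C)(\R)$, and \Cref{prop:hypconetoll} gives that $\sigma_{k-1}(\iota(C))\subset\pp^{2k}$ is a hyperbolic hypersurface with hyperbolicity cone $\conv(\iota(C_0))$. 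The linear map inducing $\pi$ restricts to a bijection between rays of $\widehat{\iota(C_0)}$ and rays of $\widehat{C_0}$, so $\pi$ maps $\conv(\iota(C_0))$ onto $\conv(C_0)$ and thereby exhibits $\conv(C_0)$ as a hyperbolic shadow. The hyperbolicity cone has dimension $2k+1 = 2\lfloor d/2\rfloor + 1 \leq d+1$, and its degree is computed by specializing \Cref{lem:secantdeg} to $\iota(C) \subset \pp^{2k}$ with curve degree $2k+g$ and secant index $k-1$; the same manipulation as in \Cref{cor:deg} yields the formula in the statement, and the special case $g\leq k$ reduces to $(d+2-g)\cdot 2^{g-1}$ because $\binom{g}{i} = 0$ for $i > g$.
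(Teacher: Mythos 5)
Your argument is correct and follows essentially the same route as the paper: enlarge the hyperplane class by an effective divisor supported away from $C_0$ to make it {\placeholdertwo}, embed by the complete linear system, identify the hyperbolicity cone of the secant hypersurface with $\conv(\iota(C_0))$ via \Cref{prop:hypconetoll}, and project back, the only (cosmetic) difference being that you pad with conjugate pairs to land exactly in $\pp^{2\lfloor d/2\rfloor}$, while the paper takes a minimal correction divisor of degree at most $g$ and then invokes monotonicity of the degree formula in $k$. One small slip: $\sum_{i\geq 1}d_i$ need not equal $d$, since the chosen hyperplane may meet $C$ in non-real points, but it is congruent to $d$ modulo $2$, which is all your parity bookkeeping actually uses.
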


\begin{proof}
  Let $H$ be the divisor on $C$ given by a hyperplane section. There is an effective divisor $D$ of degree at most $g$ such that $H+D$ has even degree on $C_0$ and odd degree on all the other connected components of $C(\R)$ because $C_0$ is very compact (see~\Cref{rem:verycompact}. Clearly, the divisor class $[H+D]$ is {\placeholdertwo}. We let $d'$ be the degree of $H+D$ and notice that $H+D$ is nonspecial by Lemma \ref{lem:tollnonspecial}. Let $\sL=\sL(H+D)$ be the line bundle corresponding to this divisor class and $\iota(C)\hookrightarrow\pp^{d'-g}$ the corresponding embedding. Then there are suitable sections $s_0,\ldots,s_n\in\Gamma(C,\sL)$ that vanish on $D$ such that the corresponding linear projection $\pi:\pp^{d'-g}\dashrightarrow\pp^n$ maps $\iota(C)$ to $C$. By Proposition \ref{prop:hypconetoll}, the convex hull of $\iota(C_0)$ is the hyperbolicity cone of $\sigma_k(C)$ where $k=\frac{d'-g}{2}-1$. The image of this hyperbolicity cone under $\pi$ is the convex hull of $C_0$. Since we have $k\leq\lfloor\frac{d}{2}\rfloor-1$ and since the formula in Corollary \ref{cor:deg} for the degree of $\sigma_k(C)$ is monotonically increasing in $k$, we get the upper bound of
  \[
  \left(\left\lfloor\frac{d}{2}\right\rfloor+1\right) \cdot\sum_{i=0}^{\lfloor\frac{d}{2}\rfloor}\binom{g}{i}-g\cdot\sum_{i=0}^{\lfloor\frac{d}{2}\rfloor-1}\binom{g-1}{i} 
  \]
  for the degree of $\sigma_k(C)$. The last part follows as in Corollary \ref{cor:deg}.
\end{proof}

\begin{rem}
  Note that for fixed genus $g$, both the dimension and the degree of the hyperbolicity cone in Theorem \ref{thm:hyperbolicshadows} grow linearly in the degree $d$ of the curve. If $d$ is fixed, then the dimension of the hyperbolicity cone is constant and the degree grows polynomially in the genus $g$ of the curve.
\end{rem}

\begin{rem}
  For $g=0$ the bound in \Cref{thm:hyperbolicshadows} is given by $\lfloor\frac{d}{2}\rfloor+1$. Moreover, by \Cref{ex:hankel} the hyperbolicity cone that we project is a spectrahedron. This shows that the convex hull of a rational curve of even degree is a shadow of an at most $d+1$-dimensional spectrahedral cone that can be described by matrices of size at most $\lfloor\frac{d}{2}\rfloor+1$. This fact has first been proved in \cite{dieter0}. Using our results from \Cref{sec:elliptic} we will next prove a similar result for elliptic curves.
\end{rem}

\subsection{Elliptic curves}
For elliptic curves, the situation is better understood. In \cite{clausg1} Scheiderer shows that the convex hull of any elliptic curve, whose real part is compact, is a spectrahedral shadow. Moreover, in this case he gives upper bounds on the size of the involved matrices. However, these bounds depend on the specific curve and they even tend to infinity when the curve is degenerated to a singular one \cite[Cor.~4.6]{clausg1}. He further remarks that his bounds are in a sense optimal when constructing a representation as spectrahedral shadow using the Lasserre relaxation \cite[Rem.~2.15]{clausg1}. Using the results from the previous sections, we can reprove that the closed convex hull of an elliptic curve, whose real part is compact, is a spectrahedral shadow and our proof yields bounds that only depend (linearly) on the degree of the elliptic curve.

\begin{prop}\label{prop:shadowellipticM}
  Let $C\subset\pp^n$ be an elliptic $M$-curve of degree $d$ and let $C_0$ be a very compact connected component of $C(\R)$. The closed convex hull of $C_0$ is a shadow of an at most $(d+1)$-dimensional spectrahedral cone that can be described by matrices of size at most $d+1$.
\end{prop}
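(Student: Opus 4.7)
The plan is to mimic the projection argument of \Cref{thm:hyperbolicshadows} but to replace the appeal to hyperbolicity with the definite determinantal representation of \Cref{thm:ellnormaldetrep}, thereby upgrading the hyperbolicity cone to a spectrahedron. The sharper dimension bound will come from a parity observation that lets us modify $H$ by a divisor of degree at most one rather than two.

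The central observation is that whenever $C_0\subset \P^n(\R)$ is very compact, every real hyperplane section $H$ has even degree on $C_0$. Indeed, an avoiding real hyperplane $H'$ yields $\deg(H'|_{C_0})=0$, and the parity of the degree on a real connected component is constant under linear equivalence by \cite[Lemma~4.1]{grossharris}. I then choose an effective divisor $D$ as follows: if $d$ is odd put $D=0$, so that $\deg(H|_{S_1})=d-\deg(H|_{C_0})$ is odd and $H$ is already {\placeholdertwo}; if $d$ is even pick any $Q\in S_1=C(\R)\setminus C_0$ and put $D=Q$, so that $H+Q$ becomes {\placeholdertwo} while still having even degree on $C_0$. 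In both cases $E=H+D$ is an effective, very ample, {\placeholdertwo} divisor of odd degree $d'\leq d+1$ with $\deg(E|_{C_0})$ even; by \Cref{lem:tollnonspecial} it is non-special, so $|E|$ embeds $C$ as an elliptic normal curve $\iota(C)\subset \P^{d'-1}$, and \Cref{rem:verycompact} ensures that $\iota(C_0)$ is very compact.

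Writing $d'=2k+3$, so that $\iota(C)\subset \P^{2k+2}$, \Cref{prop:hypconetoll} identifies $\conv(\iota(C_0))$ with the hyperbolicity cone of the hypersurface $\sigma_k(\iota(C))$, and \Cref{thm:ellnormaldetrep} exhibits a definite symmetric determinantal representation of this hypersurface of size $d'=2k+3$. Hence $\conv(\iota(C_0))$ is a spectrahedral cone in $\R^{d'}$ cut out by a pencil of symmetric matrices of size $d'$. Finally, the inclusion $H^0(C,\sL(H))\hookrightarrow H^0(C,\sL(E))$ of sections vanishing on $D$ induces a linear projection $\widehat{\pi}\colon \R^{d'}\to \R^{n+1}$ satisfying $\widehat{\pi}(\widehat{\iota(C_0)})=\widehat{C_0}$; since $\widehat{\pi}$ is linear it commutes with taking convex hulls of affine cones, so $\widehat{\pi}(\widehat{\conv(\iota(C_0))})=\widehat{\conv(C_0)}$, exhibiting $\conv(C_0)$ as a spectrahedral shadow of the claimed type. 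The only delicate point is the parity observation at the start; without it one only obtains $d'\leq d+2$.
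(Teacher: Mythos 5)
Your argument is correct and is essentially the paper's own proof: the paper simply combines \Cref{thm:hyperbolicshadows} (whose proof, specialized to $g=1$, contains exactly your parity choice of an effective divisor $D$ of degree at most $1$ making $H+D$ {\placeholdertwo} with even degree on $C_0$) with \Cref{thm:ellnormaldetrep}, which upgrades the hyperbolicity cone of $\sigma_k(\iota(C))$ from \Cref{prop:hypconetoll} to a spectrahedral cone cut out by matrices of size $d'\leq d+1$, followed by the same linear projection. One cosmetic remark: the equality $\deg(H|_{S_1})=d-\deg(H|_{C_0})$ holds only modulo $2$, since non-real intersection points come in conjugate pairs, but parity is all you use, so nothing is affected.
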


\begin{proof}
 By \Cref{thm:ellnormaldetrep} the hyperbolic polynomial from \Cref{thm:hyperbolicshadows} that realizes the closed convex hull of $C_0$ as a hyperbolic shadow actually has a definite determinantal representation.
\end{proof}

\begin{lem}\label{lem:elliticdoublecover}
  Let $C$ be an elliptic curve such that $C(\R)$ is connected and nonempty. There is an elliptic $M$-curve $\tilde{C}$ together with an unramified double cover $\tilde{C}\to C$.
\end{lem}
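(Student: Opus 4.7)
The plan is to prove this via the analytic uniformization of real elliptic curves, which gives us direct control over both the construction and the real topology. First, I would identify $C$ with a quotient $\C/\Lambda$ for some lattice $\Lambda\subset\C$ that is invariant under complex conjugation $z\mapsto\bar z$, where this conjugation induces the real structure on $C$. Since $C(\R)$ is connected and nonempty, the standard classification of real elliptic curves (the ``rhombic'' case, corresponding to negative discriminant of a Weierstra\ss{} model) allows us to normalize the lattice so that $\Lambda=\Z\oplus\tau\Z$ with $\tau=\tfrac12+i\beta$ for some $\beta>0$.

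Next I would introduce the sublattice $\Lambda':=\Z\oplus 2\tau\Z\subset\Lambda$, which has index $2$. The key observation is that $\overline{2\tau}=2-2\tau=2\cdot 1-2\tau\in\Lambda'$, so $\Lambda'$ is itself invariant under complex conjugation. Moreover, $2\tau-1=2i\beta$ lies in $\Lambda'$, so we can rewrite $\Lambda'=\Z\oplus 2i\beta\Z$: this is a \emph{rectangular} lattice.

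I would then take $\tilde C$ to be the real elliptic curve with $\tilde C(\C)=\C/\Lambda'$. The inclusion $\Lambda'\hookrightarrow\Lambda$ induces a surjective holomorphic map $\C/\Lambda'\to\C/\Lambda$ of degree $2$ that intertwines the two real structures. By GAGA this is a morphism of complex projective algebraic curves, and its compatibility with the real structures means it descends to a morphism $\pi\colon\tilde C\to C$ of real algebraic curves. Since $\pi$ is an isogeny of elliptic curves in characteristic zero, it is étale, hence the desired unramified double cover.

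Finally, I would verify that $\tilde C$ is an $M$-curve by inspecting the real locus of $\C/\Lambda'$. A point $z=x+iy\in\C$ represents an $\R$-point iff $z-\bar z=2iy\in\Lambda'$, i.e.\ $y\in\beta\Z$; modulo $\Lambda'=\Z\oplus 2i\beta\Z$ this leaves exactly two classes, $y=0$ and $y=\beta$, yielding two disjoint circles, namely the images of $\R$ and of $\R+i\beta$. Hence $\tilde C(\R)$ has two connected components, so $\tilde C$ is an $M$-curve of genus $1$. The one nontrivial step is ensuring the analytic construction actually yields a morphism defined over $\R$, but this is immediate from the fact that both lattices are complex-conjugation invariant and $\pi$ commutes with $z\mapsto\bar z$.
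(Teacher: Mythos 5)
Your proof is correct, but it takes a genuinely different route from the paper. The paper argues purely algebraically: it writes $C$ in Weierstra\ss{} form $y^2=x(x^2+ax+b)$ with $4b>a^2$ (the one-component case), takes $\tilde C\colon w^2=t^4+at^2+b$, observes that $(t,w)\mapsto t$ is a real fibered map to $\P^1$ (the quartic is positive on $\R$), so $\tilde C$ is of dividing type and hence a genus-$1$ $M$-curve by Hurwitz, and checks that $(t,w)\mapsto(t^2,tw)$ is an unramified degree-$2$ map onto $C$, again by Riemann--Hurwitz. You instead uniformize: the connectedness of $C(\R)$ puts you in the rhombic case $\Lambda=\Z\oplus(\tfrac12+i\beta)\Z$, and you pass to the conjugation-stable index-$2$ sublattice $\Lambda'=\Z\oplus 2i\beta\Z$, which is rectangular, so $\C/\Lambda'$ has two real circles and the quotient map is an \'etale $2$-isogeny. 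The two constructions are really the same phenomenon seen from two sides -- in both cases $C$ is exhibited as the quotient of an $M$-curve by a fixed-point-free real involution -- but your lattice computation makes the component count and the unramifiedness transparent, at the cost of invoking the analytic classification of real structures on genus-$1$ curves plus GAGA and Galois descent to get a morphism of \emph{real algebraic} curves, whereas the paper's argument stays elementary and produces explicit equations. Two small points you should not leave implicit: since $\Lambda'$ is conjugation-stable, $g_2(\Lambda'),g_3(\Lambda')$ are real, so $\C/\Lambda'$ with the involution $z\mapsto\bar z$ really is the complexification of a real elliptic curve; and the descent of the equivariant holomorphic isogeny to a morphism over $\R$ is exactly Galois descent applied after GAGA, which is standard but is the step doing the work in matching the analytic picture with the algebraic statement of the lemma.
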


\begin{proof}
  We can assume that $C$ is given in Weierstra\ss{} normal form $$y^2=x\cdot(x^2+ax+b)$$for some $a,b\in\R$ with $b\neq0$ and $4b>a^2$. We let $\tilde{C}$ be the double cover of $\pp^1$ ramified at two pairs of complex conjugate points defined by $w^2=t^4+at^2+b$. By Hurwitz's Theorem $\tilde{C}$ has genus $1$. Since the double cover $\tilde{C}\to\pp^1,\,(t,w)\mapsto t$ is real fibered, the curve $\tilde{C}$ is of dividing type and therefore an elliptic $M$-curve. Finally, the map defined by $\tilde{C}\to C,\,(t,w)\mapsto(t^2,tw)$ has degree two and is unramified by Hurwitz's Theorem.
\end{proof}

\begin{thm}\label{thm:ellipticshadow}
  Let $C\subset\pp^n$ be an elliptic curve of degree $d$ and let $C_0$ be a very compact connected component of $C(\R)$. The convex hull of $C_0$ is a shadow of an at most $(2d+1)$-dimensional spectrahedral cone that can be described by matrices of size at most $2d+1$.
\end{thm}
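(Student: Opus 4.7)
The plan is to reduce to the $M$-curve case treated by \Cref{prop:shadowellipticM} via the unramified double cover provided by \Cref{lem:elliticdoublecover}. If $C$ is itself an elliptic $M$-curve, then \Cref{prop:shadowellipticM} already writes $\conv(C_0)$ as a shadow of an at most $(d+1)$-dimensional spectrahedron with matrices of size at most $d+1$, well within the claimed bounds. I therefore assume $C(\R)$ is connected, so $C_0=C(\R)$, and apply \Cref{lem:elliticdoublecover} to obtain an unramified double cover $\pi\colon\tilde C\to C$ with $\tilde C$ an elliptic $M$-curve.

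The first step I would carry out is to identify the real topology of $\pi$: I would verify that the deck transformation swaps the two connected components of $\tilde C(\R)$, so that $\pi^{-1}(C_0)$ is two disjoint circles each mapping homeomorphically onto $C_0$. In the explicit Weierstra\ss{} model from the proof of \Cref{lem:elliticdoublecover} (the cover $w^2=t^4+at^2+b$ of $y^2=x(x^2+ax+b)$), the deck transformation is $(t,w)\mapsto(-t,-w)$, which reverses the sign of $w$ and hence swaps the components $\{w>0\}$ and $\{w<0\}$. Fix one such component $\tilde C_0\subset\tilde C(\R)$; then $\pi(\tilde C_0)=C_0$.

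Next I would construct a suitable embedding of $\tilde C$. Let $\sL$ be the line bundle cutting out $C\subset\pp^n$. Since $C_0$ is very compact, some section of $\sL$ has no zeros on $C_0$; pulling it back and using that both components of $\tilde C(\R)$ map onto $C_0$ shows that the divisor class of $\pi^*\sL$ has even parity on both components of $\tilde C(\R)$. Taking $\tilde D$ to be one real point on the component $\tilde C_1:=\tilde C(\R)\setminus\tilde C_0$, the line bundle $\tilde\sL:=\pi^*\sL(\tilde D)$ is \placeholdertwo{} of degree $2d+1$ with even degree on $\tilde C_0$. The complete linear system $|\tilde\sL|$ then embeds $\tilde C$ as an elliptic normal curve in $\pp^{2d}$ with $\tilde C_0$ very compact by \Cref{rem:verycompact}. \Cref{prop:hypconetoll} identifies $\conv(\tilde C_0)$ with the hyperbolicity cone of the hypersurface $\sigma_{d-1}(\tilde C)\subset\pp^{2d}$ of degree $2d+1$, and \Cref{thm:ellnormaldetrep} supplies a definite linear determinantal representation of size $2d+1$. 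Hence $\conv(\tilde C_0)\subset\R^{2d+1}$ is a spectrahedral cone of the desired dimension and matrix size.

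Finally, I would realize the projection. Pick sections $s_0,\ldots,s_n$ of $\sL$ realizing the embedding $C\hookrightarrow\pp^n$ and a section $s_{\tilde D}$ of $\sO_{\tilde C}(\tilde D)$ vanishing on $\tilde D$. The $n+1$ sections $\pi^*(s_i)\cdot s_{\tilde D}\in H^0(\tilde C,\tilde\sL)$ span a subspace that defines a linear projection $\pp^{2d}\dashrightarrow\pp^n$ whose restriction to $\tilde C$ factors as $\pi$ followed by the inclusion $C\hookrightarrow\pp^n$. The image of $\conv(\tilde C_0)$ under this projection is $\conv(\pi(\tilde C_0))=\conv(C_0)$, exhibiting $\conv(C_0)$ as a shadow of an at most $(2d+1)$-dimensional spectrahedral cone described by symmetric matrices of size $2d+1$. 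The main delicate point is the parity and component analysis for the real structure on the double cover; once that is in place, the hyperbolicity, determinantal, and projection ingredients from the preceding sections assemble routinely.
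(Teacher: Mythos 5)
Your proposal is correct and follows essentially the same route as the paper: reduce to the $M$-curve case via the unramified double cover of \Cref{lem:elliticdoublecover}, transfer the convex hull to a component of the cover, and use the hyperbolicity-cone and determinantal machinery (\Cref{prop:hypconetoll}, \Cref{thm:ellnormaldetrep}) together with a linear projection back to $\pp^n$. The only difference is organizational: the paper embeds $\tilde C$ by the complete linear system of $\pi^*\sO_C(1)$ in $\pp^{2d-1}$ and then invokes \Cref{prop:shadowellipticM} as a black box, whereas you inline that step by twisting with a point on the other real component to get a maximally odd divisor of degree $2d+1$ directly, which yields the same bounds and, if anything, makes the very-compactness and parity checks more explicit.
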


\begin{proof}
  If $C$ is an $M$-curve, the claim follows from Proposition \ref{prop:shadowellipticM}.
  If $C$ is not an $M$-curve, then we can find an elliptic $M$-curve $\tilde{C}$ and an unramified double cover $f:\tilde{C}\to C$ as in Lemma \ref{lem:elliticdoublecover}. The line bundle $\sL=f^*\cO_C(1)$ on $\tilde{C}$ has degree $2d$. For $0\leq i\leq n$ we let $s_i=f^* x_i$ where $x_0,\ldots,x_n$ are the homogeneous coordinates of $\pp^n$ and we complete $s_0,\ldots,s_n$ to a basis $s_0,\ldots,s_{2d-1}$ of $\Gamma(\tilde{C},\sL)$. We embed $\tilde{C}$ to $\pp^{2d-1}$ via $s_0,\ldots,s_{2d-1}$. We have now realized $f$ as the restriction to $\tilde{C}$ of the linear projection $\pp^{2d-1}\dashrightarrow\pp^n$ onto the first $n+1$ coordinates. Since $f$ is unramified, each connected component $\tilde{C}_0$ of $\tilde{C}(\R)$ is mapped surjectively onto $C_0$. Thus since the convex hull of $\tilde{C}_0$ is the projection of a $(2d+1)$-dimensional spectrahedron of size $2d+1$ by Proposition \ref{prop:shadowellipticM}, the same is true for the convex hull of $C_0$. 
\end{proof}

\begin{ex}
  We apply \Cref{thm:ellipticshadow} to an elliptic normal $M$-curve in $\P^3$. For this, we embed the cubic curve $C\subset \P^2$ defined by the equation $x_0^3 - x_0 x_2^2 - x_1^2 x_2$ as in \Cref{ex:elliptic1} via the linear system $\sL(4P)$, where $P = (0:1:0)$. This linear system gives the map
  \[
    \iota\colon \left\{
      \begin{array}{l}
        C \ratto \P^3 \\
        (x_0:x_1:x_2) \mapsto (x_0^2:x_0x_2:x_1x_2:x_2^2).
      \end{array}
      \right\}
  \]
  The vanishing ideal of the image is generated by $z_0z_3-z_1^2$ and $z_0z_1 - z_1z_3 - z_2^2$ in coordinates $(z_0:z_1:z_2:z_3)$ on $\P^3$. This embedding is a coordinate projection of the embedding of $C$ to $\P^4$ given in \Cref{ex:elliptic1} from $(0:1:0:0:0)$. In $\P^3(\R)$, the curve has two very compact connected components. We take the convex hull of the component $S_0$ that is the image of the very compact component of $C(\R)\subset \P^2(\R)$. By \Cref{ex:ellparticularlynice}, this convex hull has the semidefinite representation
  \[
    \begin{array}{rl}
      \conv(S_0) = & \{(z_0:z_1:z_2:z_3)\in\P^3(\R) \, \vert \, \exists t\in \R \colon \\
      & \begin{pmatrix}
          -z_1 & z_0 & -t & -t & -z_2 \\
          z_0 & -z_1 & 0 & z_2 & 0 \\
          -t & 0 & z_0 & 0 & z_1 \\
          -t & z_2 & 0 & -z_0 + z_3 & 0 \\
          -z_2 & 0 & z_1 & 0 & z_3
        \end{pmatrix} \text{ is semidefinite} \}.
    \end{array}
  \]
  Note that by \Cref{ex:ellconv3} it has even a spectrahedral representation.
\end{ex}

\begin{rem}
  We discuss the relation of the previous \Cref{thm:ellipticshadow} to various results obtained by Scheiderer. Scheiderer constructed semidefinite representations for convex hulls of curves relying on sums of squares methods (namely on stability of preorders on curves) in \cite{clauscurves}. He has worked out the case of elliptic curves more explicitly in \cite{clausg1}. He shows that he cannot obtain bounds for the size of the semidefinite representation that only depend on the degree of the embedding of the genus $1$ curve. Using his methods, the bounds on the size of the representation correspond to degree bounds for sum-of-squares representations. Our result, on the other hand, gives a bound on the size only in terms of the degree, namely $2d+1$. First of all, this of course means, that the representation that we construct is not directly obtained by Scheiderer's sum-of-squares methods. On the other hand, in light of \cite[Theorem~3.3]{clausschatten}, the semidefinite representation corresponds to some morphism $\phi\colon X \to C$ such that there are degree bounds for sum-of-squares representations for the pullbacks of the linear system embedding $C$ on $X$.
\end{rem}

\section{Open Questions}
We would like point the reader to a few open questions arising from our work.
\begin{enumerate}
  \item To show the existence of vastly real linear systems, we construct them as maximally odd linear systems, using the interlacing property. How to find vastly real linear systems that are not maximally odd? 
  \item Can the rigid isotopy results from \cite{mikhalkin2019rigid} be generalized to vastly real linear systems of dimension $2k+1$ for $k>1$?
  \item Do vastly real linear systems exist on real curves of genus $g$ with fewer than $g+1$ connected components exist? \Cref{rem:nonm} suggests that the answer might be \emph{no}.
  \item The convex hulls of very compact connected components of $M$-curves embedded by a vastly real linear system of degree $2k+2+g$ are hyperbolicity cones. By Scheiderer's results \cite{clauscurves}, they have semidefinite representations. But are they spectrahedra, as the Generalized Lax Conjecture claims?
\end{enumerate}

\bigskip

\noindent \textbf{Acknowledgements.}
Part of this work was done while the authors were visiting the Simons Institute for the Theory of Computing. We would like to thank Kristian Ranestad for helpful discussions.

\def\cprime{$'$}

\end{document}